\let\@@pmod\mod
\DeclareRobustCommand{\mod}{\@ifstar\@pmods\@@pmod}
\def\@pmods#1{\mkern4mu({\operator@font mod}\mkern 6mu#1)}
\definecolor{blue}{rgb}{0,0,1}
\definecolor{red}{rgb}{1,0,0}
\definecolor{green}{rgb}{0,.6,.2}
\definecolor{purple}{rgb}{1,0,1}
\long\def\red#1\endred{\textcolor{red}{#1}}
\long\def\blue#1\endblue{\textcolor{blue}{#1}}
\long\def\purple#1\endpurple{\textcolor{purple}{ #1}}
\long\def\green#1\endgreen{\textcolor{green}{#1}}
\newcommand{\bsl}{\backslash}
\newcommand{\Z}{\mathbb{Z}}
\newcommand{\Q}{\mathbb{Q}}
\newcommand{\R}{\mathbb{R}}
\newcommand{\C}{\mathbb{C}}
\newcommand{\HH}{\mathbb{H}}
\newcommand{\scrA}{\mathcal{A}}
\newcommand{\scrM}{\mathcal{M}}
\newcommand{\scrE}{\mathcal{E}}
\newcommand{\scrL}{\mathcal{L}}
\newcommand{\scrP}{\mathcal{P}}
\newcommand{\scrO}{\mathcal{O}}
\newcommand{\cuspa}{\mathfrak{a}}
\DeclareMathOperator{\SL}{SL}
\DeclareMathOperator{\GL}{GL}
\DeclareMathOperator{\OD}{OD}
\DeclareMathOperator{\ord}{{ord}}
\DeclareMathOperator{\vol}{{vol}}
\newcommand{\cusp}{{\rm cusp}}
\newcommand{\cont}{{\rm cont}}
\newcommand{\res}{{\rm res}}
\newcommand{\Res}{{\rm Res}}
\newcommand{\Span}{{\rm Span}}
\newcommand{\intg}{{\rm int}}
\newcommand{\new}{{\rm new}}
\newcommand{\sm}{\left(\begin{smallmatrix}}
\newcommand{\esm}{\end{smallmatrix}\right)}
\newcommand{\bpm}{\begin{pmatrix}}
\newcommand{\ebpm}{\end{pmatrix}}
\newtheorem{theorem}{Theorem}
\newtheorem{lemma}[theorem]{Lemma}
\newtheorem{proposition}[theorem]{Proposition}
\newtheorem{corollary}[theorem]{Corollary}
\theoremstyle{remark}
\newtheorem{remark}[theorem]{Remark}
\numberwithin{theorem}{section}
\numberwithin{equation}{section}
\title{Second moments of Rankin-Selberg convolutions and Shifted Dirichlet Series}
\author{Jeff Hoffstein and Min Lee}
\thanks{M.~L.\ was supported by Royal Society University Research Fellowship ``Automorphic forms,
L-functions and trace formulas''.}
\address{Mathematics Department, Brown University, Providence RI 02912, USA }
\email{\tt jhoff@math.brown.edu}
\address{School of Mathematics, University of Bristol, Bristol, BS8 1TW, UK}
\email{\tt min.lee@bristol.ac.uk}
\begin{document}
\maketitle

\begin{abstract}
In this paper we work over $\Gamma_0(N)$, for any $N$, 
and write the spectral moment of a product of two distinct Rankin-Selberg convolutions at a general point on the critical line $\frac{1}{2} +it$ as a main term plus a sharp error term in the $t$ aspect and the spectral aspect.   
As a result we obtain hybrid Weyl type subconvexity results in the $t$ and spectral aspects.  
Also, for fixed modular forms $f$, $g$ of even weight  $k \ge 4$ we show there exists 
a Maass cusp form $u_j$ such that $L(1/2, f\times u_j)$, $L(s, \bar{g}\times \overline{u_j})$ are simultaneously non-zero.
\end{abstract}

\tableofcontents

\section{Introduction}\label{s:intro}

The object of this paper is to study the spectral moment of a product of two Rankin-Selberg convolutions 
of holomorphic newforms with Maass cusp forms in $L^2(\Gamma_0(N) \bsl \HH)$. 
We will begin, in the following section, by introducing notations for the spectral decomposition of the space . 

\subsection{The basis for $L^2(\Gamma_0(N)\bsl \HH)$}\label{ss:basis_L2}
Let $\HH=\{x+iy\;:\; x\in \R, \; y>0\}$ be the Poincar\'e upper half plane. 
For any $\varphi_1, \varphi_2\in L^2(\Gamma_0(N) \bsl \HH)$, we define the Petersson inner product
\begin{equation}
\left<\varphi_1, \varphi_2\right> = \iint_{\Gamma_0(N)\bsl \HH} \varphi_1(z)\overline{\varphi_2(z)} \; \frac{dx\; dy}{y^2}.
\end{equation}

Let $\{u_j\}_{j\geq 0}$ be an orthonormal basis (with respect to the Petersson inner product) 
for the discrete part of the spectrum of the Laplace operator on $L^2\left(\Gamma_0(N)\bsl \HH\right)$.   
We will assume the basis is simultaneously diagonalized with respect to Hecke operators corresponding to any integer relatively prime to $N$. 
Let $s_j(1-s_j)$, for $s_j\in \C$, be an eigenvalue of the Laplace operator for $u_j$ for each $j\geq 0$.   
Then we have the Fourier expansion
\begin{equation}\label{e:normalized_maassform}
u_j(z) = \sum_{n\neq 0} \rho_j(n)\sqrt{y}K_{ir_j}(2\pi|n|y)e^{2\pi inx}\,.
\end{equation}
Here $s_j=\frac{1}{2}+ir_j$ for $r_j\in \R$ or $\frac{1}{2}<s_j<  \frac34$ (if exists, for finitely many $j$'s), 
and  
\begin{equation}
K_{\nu}(y)
= \frac{1}{2} \int_0^\infty e^{-\frac{1}{2} y \left(t+t^{-1}\right)} t^{\nu} \; \frac{dt}{t},  
\end{equation}
for $\nu\in \C$ and $y>0$, is the $K$-Bessel function. 
We further diagonalize the basis under the action $z\mapsto -\bar{z}$; 
there exist $\epsilon_j\in \{0, 1\}$ such that $u_j(-\bar{z}) = (-1)^{\epsilon_j} u_j(z)$.
We then have $\rho_j(-m)=(-1)^{\epsilon_j} \rho_j(m)$ for any $m\neq 0$. 

The Eisenstein series for $\Gamma=\Gamma_0(N)$ are indexed by the cusps $\cuspa\in \Q\cup\{\infty\}$.
For each cusp $\cuspa\in \Q\cup\{\infty\}$, let $\sigma_\cuspa\in \SL_2(\R)$, 
with $\sigma_\cuspa \infty = \cuspa$, 
be a scaling matrix for the cusp $\cuspa$, i.e., 
$\sigma_\cuspa$ is the unique matrix (up to right translations)
such that 
$\sigma_\cuspa \infty=\cuspa$
and 
\begin{equation}
\sigma_\cuspa^{-1}\Gamma_\cuspa \sigma_\cuspa 
= \Gamma_\infty = \left\{\left. \pm \bpm 1 & b\\ 0 & 1\ebpm \;\right|\; b\in \Z\right\},
\end{equation}
where
\begin{equation}
\Gamma_\cuspa 
= \left\{\gamma\in \Gamma\;|\; \gamma \cuspa = \cuspa\right\}.
\end{equation}
For a cusp $\cuspa$, define the Eisenstein series at the cusp $\cuspa$ to be
\begin{equation}\label{e:Ea}
E_\cuspa(z, s) 
= \sum_{\gamma\in \Gamma_\cuspa \bsl \Gamma} \Im(\sigma_\cuspa^{-1}\gamma z)^s, 
\end{equation}
with the following Fourier expansion:
\begin{equation}\label{m:EisensteinSeries_fex}
E_\cuspa \left(z, s\right) 
= \delta_{\cuspa, \infty} y^{s}
+ \tau_\cuspa \left(s, 0\right) y^{1-s}
+ \sum_{n\neq 0} \tau_\cuspa \left(s, n\right) \sqrt y K_{s-\frac{1}{2}}(2\pi |n|y)e^{2\pi inx}.
\end{equation}
We follow \cite{Y19} and give an explicit description of the Eisenstein series $E_{\cuspa}(z, s)$ in \S\ref{ss:factorization_scrL}. 

\subsection{The object of study: Second moment of Rankin-Selberg convolutions}\label{ss:notation2}

In this paper we study an average over the spectrum of $L$-functions of the degree $8$ Euler product
which arises from the product of the two Rankin-Selberg convolutions, 
with arguments on the critical line, of $f$ with $u_j$ and $g$ with $u_j$.   
Our main objective is to write this as a main term plus a sharp error term.   
One consequence of this is that we derive a simultaneous non-vanishing result given in Corollary~\ref{cor:nonvanishing_1}. 
We also provide a proof for general $N$ of some subconvexity estimates for Rankin-Selberg $L$-series 
that are uniform in the spectral aspect, and $t$. 
These were proved in the $N=1$ case in \cite{Ivic-Jutila}, and without the $t$ dependence by \cite{LLY06}, 
and will be discussed below.

The method does not allow us to keep track of the  explicit  dependence on the weight $k$ and the level $N$. This is due to a gap in the literature.   
A crucial ingredient in our work is an estimate for sums of triple products, 
due to Bernstein, and Reznikov (see \cite[Proposition~4.1]{HHR} for a statement): 
\begin{equation}\label{e:innerbound}
\sum_{|r_j|< T} |\left<u_j, V_{f, g}\right>|^2 e^{\pi|r_j|}
+\sum_\cuspa \int_{-T}^T |\left<E_{\cuspa}(*,1/2 + ir), V_{f, g}\right>|^2 e^{\pi |r|}\; dr
\ll_{N,k}T^{2k}\log(T).
\end{equation}
Here $V_{f,g} (z)= \overline{f(z)}g(z) y^k$.  
Unfortunately the implied constant has an unknown dependence on $N$ and $k$.   
Consequently, all implied constants in this paper will be assumed to depend on $\epsilon >0$ if appropriate, and on $N$ and $k$.

We deal with the case of general $N$, not necessarily square free.  
However, the notation becomes sufficiently more complicated,in the case of $N>1$ 
(mostly because of the Mass cusp oldforms and cusps of $\Gamma_0(N)$),
that we feel it would be clearer to first present the results for $N=1$. 
The general case, given in \S\ref{ss:second_N}, has exactly the same form as the level $N=1$ case \S\ref{ss:main_1},
with dependence on the primes dividing $N$.
   
We again remark that, to make notation less cumbersome, 
throughout this paper every implied constant in a $\ll$ or $\scrO$ expression 
will be assumed to depend on $N$, $k$ and, if relevant, $\epsilon >0$.

Let $f$ and $g$ be weight $k$ holomorphic newforms for $\Gamma_0(N)$ with the following Fourier expansions: 
\begin{align}
& f(z) =\sum_{n=1}^\infty a(n) e(nz) \text{ and } \qquad A(n) = a(n) n^{-\frac{k-1}{2}}, \label{e:f_Fourier}
\\ & g(z) = \sum_{n=1}^\infty b(n) e(nz) \text{ and } \qquad B(n) = b(n) n^{-\frac{k-1}{2}}. \label{e:g_Fourier}
\end{align}
Here we assume that $k$ is an even positive integer and $k\geq 4$. 

With the spectral decomposition in \S\ref{ss:basis_L2}, the Rankin-Selberg convolution of the cusp form $f$ 
with an $L^2$-normalized Maass cuspform $u_j$, 
and $f$ with the Eisenstein series $E_{\cuspa}$ at each cusp $\cuspa$, is written as
\begin{equation}\label{RSu}
\scrL(s, f\times u_j) = \zeta^{(N)}(2s) \sum_{m=1}^\infty \frac{A(m) \rho_j(m)}{m^s}
\end{equation}
and
\begin{equation}\label{RSE}
\scrL_{\cuspa}(s, ir; f) = \zeta^{(N)}(2s)\sum_{m=1}^\infty \frac{A(m) \tau_{\cuspa}(1/2+ir, m)}{m^s}.
\end{equation}
Here $\zeta^{(N)}(s) = \prod_{p\mid N}(1-p^{-s}) \zeta(s)$. 

Let us remind that we take the orthonormal baiss $\{u_j\}_{j\geq 0}$ simultaneously diagonalized with respect to Hecke operators $T_n$, $\gcd(n, N)=1$. 
When $u_j$ is a newform of level $N$, we choose $u_j$ to be an eigenfunction of all Hecke operators $T_n$ 
(when $n\mid N$, the traditional notation is $U_n$, but we use $T_n$ for any positive integer $n$)
with eigenvalues $\lambda_j(n)$ for $n\in \Z_{\geq 1}$.
Then $\rho_j(n)=\rho_j(1)\lambda_j(n)$ and we have 
\begin{equation}\label{e:scriptL}
\scrL(s, f\times u_j) 
= \rho_j(1)\zeta^{(N)}(2s) \sum_{m=1}^\infty \frac{A(m)\lambda_j(m)}{m^s}= \rho_j(1)L(s, f\times u_j).
\end{equation}

Recall that by \cite[Appendix]{H-L}, when the $u_j$ are normalized to have $L^2$-norm 1 then
\begin{equation}\label{weight}
(1+| r_j|)^{-\epsilon} \ll |\rho_j(1)|^2 e^{-{\pi |r_j|}} \ll\log(1+| r_j|),
\end{equation}
so $ |\rho_j(1)|^2(\cosh(\pi r_j))^{-1}$ is close to a constant. 
The relationship between the two definitions of the Rankin-Selberg convolution is not quite as straightforward 
when $u_j$ is not a newform
as it is in \eqref{e:scriptL}.  This is why this notation $\scrL(s, f\times u_j)$ is introduced.  
We will only use $\scrL(s, f\times u_j)$ for general level $N>1$.

We now introduce the test function we will use for our spectral expansion.
Following \cite[\S9]{Iwa02}, 
let $h(r)$ be a function satisfying the following conditions:
\begin{enumerate}\label{hcond}
\item $h(r)$ is even;
\item $h(r)$ is holomorphic in the strip;
$|\Im(t)|\leq 1/2 +\epsilon$;
\item $h(r)\ll (|t|+1)^{-2-\epsilon}$ in the strip;
\item $h(\pm i/2)=0$.
\end{enumerate}
For our applications, we would like $h(r)$ to have another property: 
Fix $T\gg 1$ and $\frac{1}{3}< \alpha < \frac{2}{3}$. 
We will require that $h(r)$ decays exponentially when $|T-|r|| \gg T^\alpha$
and remains close to constant when $|T-|r|| \ll T^\alpha$. 
The specific example that we will use for the present application is 
\begin{equation}\label{e:hdef}
h(r) = h_{T, \alpha}(r) 
= \left(e^{-\left(\frac{r-T}{T^\alpha}\right)^2} + e^{-\left(\frac{r+T}{T^\alpha}\right)^2}\right) 
\left(\frac{r^2+\frac{1}{4}}{r^2+R}\right)
\end{equation}
for $1\ll R< T^2$. 

We consider the following second moment for Rankin-Selberg convolution for level $N\geq 1$: 
\begin{multline}\label{e:secondmoment_intro}
S(s, t; f, g; h)
:= \sum_{j} \frac{h(r_j)}{\cosh(\pi r_j)} \scrL(1/2+it, f\times u_j) \overline{\scrL(\bar{s}, g\times u_j)}
\\ + \sum_{\cuspa} \frac{1}{4\pi} \int_{-\infty}^\infty \frac{h(r)}{\cosh(\pi r)}
\scrL_{\cuspa}(1/2+it, ir; f) \overline{\scrL_{\cuspa}(\bar{s}, ir; g)} \; dr. 
\end{multline}

\subsection{Statement of results in the case $N=1$}\label{ss:main_1}
When $N=1$, the second moment \eqref{e:secondmoment_intro} can be written as 
\begin{multline}\label{e:secondmoment_intro_N=1}
S(s, t; f, g; h)
= \sum_{j} \frac{h(r_j)}{\cosh(\pi r_j)} |\rho_j(1)|^2 L(1/2+it, f\times u_j) L(s, \bar{g}\times \overline{u_j}) 
\\ + \int_{-\infty}^\infty \frac{h(r)}{\cosh(\pi r)}
\frac{L(1/2+it+ ir; f)L(1/2+it- ir; f) L(s +ir; \overline{g}) L(s -ir; \overline{g})}
{\Gamma\left(\frac{1}{2}+ir\right)\Gamma\left(\frac{1}{2}-ir\right) \zeta(1+2ir)\zeta(1-2ir)}\; dr. 
\end{multline}
Here 
\begin{equation}
L(s, f) = \sum_{n=1}^\infty \frac{A(n)}{n^s}
\end{equation}
is the $L$-function for $f$. 

We will give in Theorem~\ref{thm:second_main} an expression of $S(s, t; f, g; h)$ for all $N\geq 1$
and all $h$ satisfying the conditions listed in \S\ref{ss:notation2} as a main term plus some explicit presumed error terms.   
This is for potential future applications with different versions of $h$.  
However our primary objective is to write the spectral piece of $S(s, t; f, g; h)$ in \eqref{e:secondmoment_intro}
(and in \eqref{e:secondmoment_intro_N=1} when $N=1$), that is, the sum over $j$, 
as a main term plus an error term for the specific $h$, defined in \eqref{e:hdef},
for sufficiently large $T$. 
In fact, note that the definition of $h=h_{T, \alpha}$ in \eqref{e:hdef} has the effect of making the sum over $j$ 
a smoothed sum over the interval 
\begin{equation}
|T-|r_j|| \ll T^\alpha 
\end{equation} 
of $L(1/2+it, f\times u_j) L(s, \bar{g}\times \overline{u_j})$, weighted by $\frac{|\rho_j(1)|^2}{\cosh(\pi r_j)}$, 
which, by \eqref{weight}, lies in the interval 
\begin{equation}
(1+| r_j|)^{-\epsilon} \ll\frac{ |\rho_j(1)|^2}{\cosh(\pi r_j)} \ll\log(1+| r_j|).
\end{equation}

Let 
\begin{equation}\label{e:H0}
H_0(ix; h) := \frac{1}{\pi^2} \int_{-\infty}^\infty h(r) r \tanh(\pi r) 
\frac{\Gamma\left(ir+ix+it+\frac{k}{2}\right)\Gamma\left(-ir+ix+it+\frac{k}{2}\right)}
{\Gamma\left(ir+it+\frac{k}{2}\right) \Gamma\left(-ir+it+\frac{k}{2}\right)} \; dr.
\end{equation}
We note that it is easily checked that, taking $h=h_{T, \alpha}$, given in \eqref{e:hdef},  as $T \rightarrow \infty$
\begin{equation}\label{e:H0_x=0}
H_0(0; h_{T, \alpha}) ~\sim 2 \pi^{-\frac{3}{2}}T^{1+\alpha}.
\end{equation}
For  $|x| \ll T^\beta$, with $\beta <1-\alpha$,  
\begin{equation}\label{e:H0_asymp}
H_0(ix; h_{T, \alpha}) \ll T^{1+\alpha}
\end{equation}
and finally, for $1-\alpha < \beta < 1$ and  $|x| = T^\beta$,
$H_0(ix; h_{T, \alpha})$ decays exponentially in $T$.

For $\Re(s)=\frac{1}{2}$ and $t\in \R$ let
\begin{multline}\label{e:scrM_N=1}
\scrM (s, t)
= \zeta(2s) \zeta(1+2it)
H_0(0; h)
\frac{L(s+1/2+it, f\times \bar{g})}{\zeta(2s+1+2it)}
\\ + (2\pi)^{4it} \zeta(2s)\zeta(1-2it)
H_0(-2it; h)
\frac{L(s+1/2-it, f\times \bar{g})}{\zeta(2s+1-2it)} 
\\ + (2\pi)^{4s-2+4it}\zeta(2-2s)\zeta(1-2it)
H_0(-2s+1-2it; h)
\frac{L(3/2-s-it, f\times \bar{g})}{\zeta(3-2s-2it)}
\\ + 
(2\pi)^{4s-2}\zeta(2-2s)\zeta(1+2it)
H_0(-2s+1; h)
\frac{L(3/2-s+it, f\times \bar{g})}{\zeta(3-2s+2it)}.
\end{multline}
Take $s = \frac{1}{2} -it'$, with $t' = \pm t$. 
If $t' = t$, and $1-\alpha< \beta< 1$, then, because of the exponential decay of $H_0(\pm 2it; h)$, the above simplifies considerably to (when $f\neq g$) 
\begin{equation}
\scrM(s, t)
= 2H_0(0; h) \frac{| \zeta(1-2it)|^2}{\zeta(2)} 
L(1, f\times \bar{g}).
\end{equation}
See the proof of Corollary~\ref{cor:nonvanishing_1} below for details.
Our main theorem, in the case $N=1$ is:
\begin{theorem}\label{thm:N=1} 
Fix $T \gg 1$, $\epsilon >0$, and $t$ with $t=0$ or $|t| =T^\beta$, with $\beta <1$. 
Let $\frac{1}{3} < \alpha < \frac{2}{3}$ and $h=h_{T, \alpha}$ as in \eqref{e:hdef}. 
Set $s= \frac{1}{2} - it'$, with $t' = \pm t$.  
The second moments of Rankin-Selberg convolutions $S(s, t; f, g; h)$, 
defined in \eqref{e:secondmoment_intro_N=1} are given by 
\begin{equation}
S(1/2-it', t; f, g; h) = \scrM(s, t) + \scrO\left(T^{\alpha+ \max(\alpha,1-s(\alpha, \beta; t')+\epsilon})\right),
\end{equation}
where 
\begin{equation}\label{e:s_degree_error}
s(\alpha, \beta; t')
= \begin{cases} 
\frac{3\alpha-1}{2} & \text{ if } \beta < \min\{2\alpha, 1-\alpha\} \text{ or } \beta=1-\alpha, \\
(2\alpha-\beta)\frac{k+1}{2} & \text{ if } 1-\alpha < \beta < 2\alpha \text{ and } t'=t, \\
\left(1-\frac{3\beta}{2} + \frac{\delta k}{2}\right) & \text{ if } 1-\alpha < \beta< \frac{\alpha+1}{2}, 
\; 2\beta-1+\delta=\alpha < \frac{2}{3}, \; \delta>0 \text{ and } t'=-t. 
\end{cases} 
\end{equation}
For general $t$ in the given range, and any $\epsilon>0$,
$$
\scrM(s, t) \ll T^{1+\alpha + \epsilon}
$$
and for any $\alpha, \beta$ in the given ranges, for appropriate choices of $\epsilon$, $T^{1+\alpha + \epsilon}$ will always dominate the error term. 
When $t=0$, $\scrM(1/2, 0)$ is asymptotic to $T^{1+\alpha}P_d(\log T)$ 
where $P_d(x)$ is a degree $d$ polynomial of $x$ 
and the degree $d=3$ when $f=g$ and $d=2$ when $f\neq g$. 
The leading coefficient $c_{f, g}$ of $P_{d}(x)$ is given by 
\begin{equation}\label{e:cfg_N=1}
c_{f, g} = \pi^{-\frac{3}{2}} \frac{2}{\zeta(2)}
\begin{cases} 
4L(1, f\times \bar{g}) & \text{ when } f\neq g, \\
8\Res_{s=1}L(s, f\times\bar{f}) & \text{ when } f=g. 
\end{cases} 
\end{equation}

\end{theorem}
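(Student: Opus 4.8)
The plan is to compute $S(s,t;f,g;h)$ by opening up both Rankin--Selberg convolutions into Dirichlet series and applying a suitable spectral summation formula. First I would recall from the spectral expansion in \S\ref{ss:basis_L2} that the weighted sum over $u_j$ together with the Eisenstein contribution forms the spectral side of a Kuznetsov-type formula; indeed, the factor $\frac{h(r)}{\cosh(\pi r)}|\rho_j(1)|^2$ (and its Eisenstein analogue $1/|\zeta(1+2ir)|^2$, etc.) is exactly what appears there. So the first step is to write
\[
S(s,t;f,g;h)=\sum_{m,n\ge 1}\frac{A(m)\overline{B(n)}}{m^{1/2+it}\,n^{\bar s}}\cdot\Big(\text{spectral sum of }\rho_j(m)\overline{\rho_j(n)}\Big)
\]
and feed the inner spectral average into Kuznetsov. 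This replaces the spectrum by a diagonal term $\delta_{m=n}$ and an off-diagonal sum of Kloosterman sums $S(m,n;c)$ against a Bessel transform $\check h$ of $h=h_{T,\alpha}$. The diagonal term, after summing the Dirichlet series $\sum_m A(m)\overline{B(m)} m^{-1/2-it-\bar s}$ — which is essentially $L(s+1/2+it,f\times\bar g)$ divided by a $\zeta$-factor coming from the $\zeta^{(N)}(2s)$ in \eqref{RSu} — produces the first term of $\scrM(s,t)$ in \eqref{e:scrM_N=1}, with $H_0(0;h)$ emerging as the archimedean integral of the Bessel transform against the ratio of Gamma factors (these are the $\Gamma(\ldots+k/2)$ factors entering because $f,g$ have weight $k$).

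Next I would treat the off-diagonal Kloosterman term. The standard move is to open the Kloosterman sums by additive characters, carry out the $m,n$-sums against the Bessel transform, and recognize the resulting expression as a \emph{shifted convolution} / shifted Dirichlet series in the two Dirichlet-series variables. Applying the functional equation and meromorphic continuation of this shifted Dirichlet series (presumably developed in an earlier section of the paper — the ``Shifted Dirichlet Series'' of the title), one picks up its polar contributions, and these poles are precisely what produce the remaining three terms of $\scrM(s,t)$: the $(2\pi)^{4it}$, $(2\pi)^{4s-2+4it}$, and $(2\pi)^{4s-2}$ terms, whose arguments $\zeta(1\mp 2it)$, $\zeta(2-2s)$, $H_0(-2it;h)$, $H_0(-2s+1-2it;h)$, $H_0(-2s+1;h)$ reflect the various combinations of the functional-equation reflections $s\mapsto 1-s$, $t\mapsto -t$. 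The continuous spectrum of the shifted Dirichlet series, after being shifted past these poles, is what remains and must be estimated to give the error term $\scrO(T^{\alpha+\max(\alpha,\,1-s(\alpha,\beta;t')+\epsilon)})$; here one invokes the asymptotics \eqref{e:H0_x=0}–\eqref{e:H0_asymp} for $H_0$ (so that the main term is of size $T^{1+\alpha}$ up to logs) and, crucially, the Bernstein--Reznikov bound \eqref{e:innerbound} on sums of triple products to control the tails of the spectral expansion uniformly.

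The bookkeeping for the main term is then: for $s=\tfrac12-it'$ with $t'=\pm t$, substitute and simplify. When $t'=t$ and $1-\alpha<\beta<1$, the exponential decay of $H_0(\pm 2it;h)$ (stated in the excerpt) kills the second and fourth terms of $\scrM$; the first and third then coincide up to the trivial identity $(2\pi)^{4s-2+4it}=1$, $\zeta(2-2s)=\zeta(1+2it)$, $L(3/2-s-it,f\times\bar g)=L(1,f\times\bar g)$, giving the factor $2$ and the displayed simplification $2H_0(0;h)\frac{|\zeta(1-2it)|^2}{\zeta(2)}L(1,f\times\bar g)$. For $t=0$ one instead has a double (or higher) pole structure: $\zeta(1+2it)=\zeta(1)$ forces a pole, and when $f=g$ the Rankin--Selberg $L$-function $L(s+1/2+it,f\times\bar f)$ contributes another pole at the same point, so $\scrM(1/2,0)$ acquires an extra logarithmic factor and becomes $T^{1+\alpha}P_d(\log T)$ with $d=2$ (resp.\ $d=3$) when $f\ne g$ (resp.\ $f=g$); extracting the Laurent expansions of $\zeta$ at $1$, of $H_0$ via \eqref{e:H0_x=0}, and of $L(s,f\times\bar f)$ via its residue $\Res_{s=1}L(s,f\times\bar f)$ pins down the leading coefficient \eqref{e:cfg_N=1}. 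I expect the main obstacle to be the uniform estimation of the shifted-convolution error term across the full range of $(\alpha,\beta,t')$ — the three-case split in \eqref{e:s_degree_error} signals that different regimes require different treatments (stationary-phase analysis of the Bessel transform of $h_{T,\alpha}$, versus using the full strength of the functional equation, versus a delicate choice of contour when $t'=-t$), and making the bound $T^{1+\alpha+\epsilon}$ genuinely dominate in every regime is where the real work lies; the identification of the four main terms, by contrast, should be a (lengthy but) routine residue computation.
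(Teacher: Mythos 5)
Your route is genuinely different from the paper's: you propose Kuznetsov applied to the double Dirichlet series, with the off-diagonal Kloosterman term converted into shifted convolutions, whereas the paper never invokes Kuznetsov or Kloosterman sums at all. It instead sums the first-moment formula of \cite{hln19} (equation \eqref{e:K_first}) against $\overline{b(n)}n^{-s-\frac{k-1}{2}}$, so that $S=M_1+\OD^++\OD^-$, and then spectrally decomposes the resulting shifted \emph{double} Dirichlet series $Z(s,v;it)$ and $\scrM^{(3)}_{f,g}$ via inner products of Poincar\'e series with $V_{f,g}$, following \cite{HHR}. Your bookkeeping of the main term is essentially right and your simplification at $s=\tfrac12-it$, $t'=t$ (the factor $2$, $(2\pi)^{4s-2+4it}=1$, $L(3/2-s-it,f\times\bar g)=L(1,f\times\bar g)$) checks out against \eqref{e:scrM_N=1}; the $d=2$ versus $d=3$ log-power mechanism at $t=0$ is also correctly identified, though the logs ultimately come from the derivatives of $H_0(ix;h)$ at $x=0$ (digamma factors $\psi(\pm ir+\tfrac k2)\sim\log|r|\sim\log T$) produced by the pole cancellation, not merely from the Laurent expansion of $\zeta$ at $1$.

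The genuine gap is in the error term, which is the actual content of the theorem. You defer the derivation of $s(\alpha,\beta;t')$ entirely (``where the real work lies''), but the three-case formula \eqref{e:s_degree_error} is not something that falls out of the main-term identification plus \eqref{e:innerbound}: it is obtained by Stirling analysis of the gamma ratios in each spectral piece ($\OD^+_{\cusp,\intg}$, $\OD^+_{\cusp,\res}$, $\OD^-_0$, etc.), a shift of the $u$-contour to $\Re(u)\asymp T^\alpha$ to force exponential decay outside $|r+t|\ll T^{1-\alpha}$, and then an application of the \emph{spectral large sieve} (the paper's Lemma~\ref{lem:sieve}, built on \cite[Theorem~7.24]{IK04}) to the sums $\sum_{|r_j|<T^C}\scrL(1/2-it',it;\overline{u_j})\left<u_j,V_{f,g}\right>$. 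The Bernstein--Reznikov bound \eqref{e:innerbound}, which is the only averaging tool you cite, controls only the triple products $\left<u_j,V_{f,g}\right>$; the accompanying central $L$-values $L(1/2,u_j)L(1/2\pm 2it,u_j)$ must be averaged separately after Cauchy--Schwarz, and it is precisely this step that produces the $T^{\frac{\max\{C,\beta\}+C}{2}+Ck}$ factors and hence the $k$-dependent exponents $(2\alpha-\beta)\frac{k+1}{2}$ and $1-\frac{3\beta}{2}+\frac{\delta k}{2}$ in the $\beta>1-\alpha$ regimes. Without this ingredient (and without verifying that your Kloosterman/Voronoi off-diagonal admits an analogous spectral decomposition whose non-polar pieces obey the same bounds), the claimed error term is unsupported. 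A secondary, fixable issue: your opening interchange of the $m,n$-sums with the spectral sum is only legitimate for $\Re(s)$ large, so the whole argument must be run there and meromorphically continued to $\Re(s)=\tfrac12$, tracking the residues that this continuation produces --- in the paper these residues are exactly the source of the $\Omega$-terms and of two of the four pieces of $\scrM(s,t)$.
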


\begin{remark}
Explicit descriptions of the error terms, without estimations, are given in Theorem~\ref{thm:second_main}.
This theorem is a special case of Theorem~\ref{thm:second_asymp}.  For general $N\geq 1$, this theorem remains true, as given in Theorem~\ref{thm:second_main} 
and Theorem~\ref{thm:second_asymp},
with the differences that the formulas for $\scrM(s, t)$ and $c_{f, g}$ include dependence on $N,k$ 
(given explicitly in \eqref{e:M2} and \eqref{e:cfg_N} respectively) 
and the error term is identical, except for an implied constant that depends on $N$ and $k$.  


As is done in \cite{Ivic-Jutila}, 
for $\beta \ge 1$, it is possible to obtain useful, but weaker, upper bounds for the error terms from the explicit formula in Theorem~\ref{thm:second_asymp}. 
We leave the computations to a time when specific applications of these extremal cases arise. 

\end{remark}

\subsection{Separating the discrete contribution from the continuous contribution}

The expression $S(s,t; f, g; h)$, defined in \eqref{e:secondmoment_intro_N=1} combines 
both a discrete spectral sum over $j$, and the continuous piece which is given by the integral:
\begin{equation}\label{intpiece}
S_{\infty}(s, t; f, g; h)
= 
 \int_{-\infty}^\infty \frac{h(r)}{\cosh(\pi r)}
\frac{L(1/2+it+ ir; f)L(1/2+it- ir; f) L(s +ir; \overline{g}) L(s -ir; \overline{g})}
{\Gamma\left(\frac{1}{2}+ir\right)\Gamma\left(\frac{1}{2}-ir\right) \zeta(1+2ir)\zeta(1-2ir)}\; dr.
\end{equation}
As is seen in Theorem~\ref{thm:N=1}, when $s=\frac{1}{2}-it'$, 
the second moment $S(s,t; f, g; h)$ is on the order of at most a power of $T^{1+\alpha+ \epsilon}$.  
The integral piece in \eqref{intpiece}, which corresponds to the continuous spectrum, 
can be easily bounded from above after first using the approximate functional equation 
to replace the product of the four $L$-series in the integral by essentially finite polynomials.  
Then a mean value of a Dirichlet polynomial argument,
in particular, see \cite[Theorem 9.1]{IK04}, shows that  
\begin{equation}\label{e:intpiece_asymp}
S_{\infty}(1/2-it', t; f, g; h)
= \scrO\left( T^{2\max(\alpha, \beta) + \epsilon}  \right). 
\end{equation}
This means that in any range where $1 + \alpha > 2\max(\alpha, \beta)$, and $s(\alpha, \beta; t')>0$
(given in \eqref{e:s_degree_error}),
the upper bound for the main term will dominate the error and the discrete cuspidal spectral piece of  $S(s,t; f, g; h)$ in \eqref{intpiece}, namely
\begin{equation}
\sum_{j} \frac{h(r_j)}{\cosh(\pi r_j)} |\rho_j(1)|^2 L(1/2+it, f\times u_j) L(s, \bar{g}\times \overline{u_j}),
\end{equation}
has the potential (if it is actually on the order of its upper bound) to dominate the continuous piece.  
Interestingly, there are ranges where the trivial bound for the continuous part exceeds the main term, and yet the error term is less than the main term.   For example, $\beta = 1-\epsilon$, $\alpha = \frac{1}{2}-\frac{\epsilon}{4}$, 
and so it makes sense to include the continuous part on the left hand side, and not just absorb it into the error term.

As the main term is a linear combination of terms it is hard to say in general that significant cancellation does not occur and that it really obtains its estimated maximum value.  However,  
in the case $t = 0$, so $s = \frac{1}{2}$, the main term can be calculated precisely, as we saw in Theorem~\ref{thm:N=1},  
and is asymptotic to $c_{f, g}T^{1 +\alpha}(\log T)^d$, with $d=2$ when $f\ne g$, 
and $d=3$ when $f= g$.   Note the definition of $c_{f,g}$ for general $N$ is given in Theorem~\ref{thm:second_asymp}.   
The same error estimates relating the sizes of the main terms and error terms are true in the case of general $N$ as in the case $N=1$.  
Unfortunately, however, although the dependence on $N$ in the continuous spectrum $S_\infty(s, t; f, g; h)$ for arbitrary level $N$ can be made explicit,  
the dependence on $N$ in other error terms is unknown.  
As was remarked earlier, this is because of the unknown dependence on $N$ of the triple product estimate \eqref{e:innerbound}.
 
\subsection{Statement of the main result for general $N$}\label{ss:second_N}
To state our result for arbitrary level $N\geq 1$, we must introduce one more notation. 
For each cusp $\cuspa\in \Q\cup\{\infty\}$ for $\Gamma_0(N)\bsl \HH$, let 
\begin{align}
& f_\cuspa(z) = (f|\sigma_\cuspa)(z) = \sum_{n=1}^\infty a_\cuspa(n)e^{2\pi inz}; \label{e:fourier_fa} 
\\ & g_\cuspa(z) = (g|\sigma_\cuspa)(z) = \sum_{n=1}^\infty b_\cuspa(n)e^{2\pi inz} \label{e:fourier_ga}
\end{align}
be the Fourier expansions for $f$ and $g$ at the cusp $\cuspa$. 
Let 
\begin{equation}\label{e:Rankin-Selberg_fg_a}
L_\cuspa(s, f\times \bar{g}) = \zeta^{(N)}(2s) \sum_{n=1}^\infty \frac{a_\cuspa(n)\overline{b_{\cuspa}(n)} n^{-k+1}}{n^s}
\end{equation}
be the Rankin-Selberg convolution of $f_\cuspa$ and $g_\cuspa$. 
When the cusp $\cuspa$ is equivalent to $\infty$, we also write $L(s, f\times \bar{g}) = L_\infty(s, f\times \bar{g})$. 
To describe the main term $\scrM(s, t)$ explicitly (see \eqref{e:M2}) we use the parameterization of cusps $\cuspa$ given in \cite{Y19}, explained in \S\ref{ss:factorization_scrL}. 
The primordial form of our starting place (before error terms are estimated) is given in the next theorem.  
\begin{theorem}\label{thm:second_main}
For $\Re(s)=1/2$, we have 
\begin{equation}\label{e:second_intro}
S(s,t; f, g; h) = \scrM(s, t) + \scrE^+(s, t) + \scrE^-(s, t),  
\end{equation}
where 
\begin{multline}\label{e:M2}
\scrM (s, t)
= \zeta(2s) \zeta(1+2it)
H_0(0; h)
\prod_{p\mid N} \frac{(1-p^{-2s})(1-p^{-1-2it})}{1-p^{-2s-1-2it}} 
\frac{L(s+1/2+it, f\times \bar{g})}{\zeta(2s+1+2it)}
\\ + (2\pi)^{4it} \zeta(2s)\zeta(1-2it)
H_0(-2it; h)
N^{-2it} \prod_{p\mid N} \frac{(1-p^{-1})(1-p^{-2s})}{1-p^{-2s-1+2it}}
\frac{L(s+1/2-it, f\times \bar{g})}{\zeta(2s+1-2it)} 
\\ + (2\pi)^{4s-2+4it}\zeta(2-2s)\zeta(1-2it)
H_0(-2s+1-2it; h)
N^{-\frac{1}{2}-s-it}
\\ \times \sum_{\cuspa=\frac{1}{ca}} 
\left(\frac{a}{\gcd(a, N/a)}\right)^{-s+\frac{3}{2}-it}
\prod_{p\mid \frac{N}{a}} \frac{(1-p^{1-2s})(1-p^{-2it})}{1-p^{-3+2s+2it}}
\prod_{\substack{p\mid a\\ p\nmid\frac{N}{a}}} \frac{(1-p^{-1})^2}{1-p^{-3+2s+2it}}
\frac{L_\cuspa(3/2-s-it, f\times \bar{g})}{\zeta(3-2s-2it)}
\\ + 
(2\pi)^{4s-2}\zeta(2-2s)\zeta(1+2it)
H_0(-2s+1; h)
N^{1-2s} \prod_{p\mid N}\frac{(1-p^{-1-2it})(1-p^{-1})}{(1-p^{-3+2s-2it})}
\frac{L(3/2-s+it, f\times \bar{g})}{\zeta(3-2s+2it)}
\end{multline}
and the potential error terms $\scrE^+(s, t)$ and $\scrE^-(s, t)$ are described in \eqref{e:E_OD+} and \eqref{e:scrE-}.   
Here $H_0(ix; h)$ is given in \eqref{e:H0}. 
\end{theorem}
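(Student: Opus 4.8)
The plan is to compute $S(s,t;f,g;h)$ by spectral unfolding: rewrite the second moment as an inner product against an automorphic kernel, apply the spectral decomposition over $\Gamma_0(N)\bsl\HH$ in reverse, and then evaluate the resulting Poincar\'e-type integral by Rankin--Selberg unfolding. Concretely, I would start from the observation that $\scrL(1/2+it,f\times u_j)$ is, up to the $\zeta^{(N)}$ factor, the $m$-th Fourier-coefficient sum $\sum_m A(m)\rho_j(m)m^{-1/2-it}$, and similarly for $g$ and for the Eisenstein contributions with $\tau_\cuspa(1/2+ir,m)$ in place of $\rho_j(m)$. Thus $S(s,t;f,g;h)$ is a spectral average of products of Fourier coefficients, weighted by $h(r_j)/\cosh(\pi r_j)$ (and its continuous analogue). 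Using the Kuznetsov/pre-trace formula in the form that reconstructs a point-pair invariant, or more directly by recognizing $\sum_j \frac{h(r_j)}{\cosh(\pi r_j)} u_j(z)\overline{u_j(w)} + (\text{Eis})$ as an automorphic kernel $K_h(z,w)$, one converts the spectral sum into an integral of $\overline{f}(z) g(z) y^k$-type data against $K_h$. The function $V_{f,g}(z)=\overline{f(z)}g(z)y^k$ from \eqref{e:innerbound} is exactly the object that appears, and the whole point of the triple-product bound \eqref{e:innerbound} is to guarantee absolute convergence of the spectral expansion of the relevant integral transform of $V_{f,g}$.

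Next I would carry out the arithmetic unfolding. After interchanging the spectral sum with the $m$-sums and inserting the explicit Fourier expansions \eqref{e:normalized_maassform} and \eqref{m:EisensteinSeries_fex}, the spectral completeness relation collapses the $u_j$-sum plus the Eisenstein integral into a delta-like kernel in the Fourier variable, leaving a single integral over $\HH$ of $\overline{f}\,g\,y^{k}$ against a Poincar\'e series built from $h$. Unfolding that Poincar\'e series against $\overline{f}g$ produces an integral $\int_0^\infty (\cdots) y^{s_0} K$-Bessel-transform$(y)\,\frac{dy}{y}$ times the Dirichlet series $\sum_n A(n)\overline{B(n)} n^{-k+1} n^{-w}$ for suitable $w$; the $K$-Bessel/$\Gamma$-integral is precisely what produces $H_0(ix;h)$ as defined in \eqref{e:H0}, and the Dirichlet series produces $L(w+1/2,f\times\bar g)/\zeta^{(N)}(2w+1)$. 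The four terms of $\scrM(s,t)$ in \eqref{e:M2} arise from the four combinations coming from (i) the two terms $\delta_{\cuspa,\infty}y^s + \tau_\cuspa(s,0)y^{1-s}$-type behavior on each side, equivalently from shifting the contour in $w$ and picking up the poles at the four points $w\in\{1/2+it,\,1/2-it,\,1/2-s+\tfrac12-it,\,1/2-s+\tfrac12+it\}$ (after using the functional equation of $L(s,f\times\bar g)$ and of $\zeta$). At level $N$, the cusps other than $\infty$ contribute the sum over $\cuspa=1/(ca)$ and the Euler-factor corrections $\prod_{p\mid N}$ in \eqref{e:M2}; these come from Yang's explicit description \cite{Y19} of $\tau_\cuspa(s,n)$ and of the Fourier expansions $f_\cuspa$, $g_\cuspa$ at the various cusps, which is exactly the input promised in \S\ref{ss:factorization_scrL}.

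The error terms $\scrE^\pm(s,t)$ are whatever survives this process: when the relevant Dirichlet series are continued and the contour is shifted, one also encounters the dual sums (off-diagonal terms) coming from the non-principal part of the Fourier expansion—the $n\neq 0$ terms and the Kloosterman-sum contributions of the Poincar\'e series, or equivalently the shifted-convolution sums $\sum_n A(n)\overline{B(n\pm h)}$ that the title of the paper advertises. I would package these as $\scrE^+$ (the ``plus'' shifts) and $\scrE^-$ (the ``minus'' shifts), defined by the explicit integral-of-shifted-Dirichlet-series formulas to be displayed in \eqref{e:E_OD+} and \eqref{e:scrE-}. At the level of this theorem no estimate is claimed—only the exact identity \eqref{e:second_intro}—so the task is purely bookkeeping: track every term through unfolding and contour shifting and verify nothing is dropped. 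The main obstacle is the level-$N$ combinatorics: correctly assembling the cusp sum and the Euler products in the third and fourth lines of \eqref{e:M2} requires handling the $\Gamma_0(N)$ double cosets, the scaling matrices $\sigma_\cuspa$, and the oldform structure of the Maass spectrum simultaneously, and making sure the $\zeta^{(N)}$ normalizations in \eqref{RSu}--\eqref{RSE} match the local factors coming out of the unfolding. The analytic core—identifying the $y$-integral with $H_0$ and the residues with the four main terms—is, by contrast, a computation that is essentially forced once the unfolding is set up correctly.
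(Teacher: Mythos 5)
Your overall architecture is plausible and shares the paper's key ingredients (the triple products $\left<u_j,V_{f,g}\right>$, the transform $H_0$, shifted convolution sums as the off-diagonal), but there is a genuine gap in where you locate the third and fourth main terms of \eqref{e:M2}. You attribute all four terms of $\scrM(s,t)$ to residues picked up by shifting a single contour against the diagonal Rankin--Selberg $L$-function, ``after using the functional equation.'' That mechanism produces only the first two terms. The paper's construction starts not from an automorphic kernel but from the first-moment formula \eqref{e:K_first} of \cite{hln19}: multiplying by $\overline{b(n)}n^{-s-\frac{k-1}{2}}$ and summing over $n$ gives $S=M_1+\OD^++\OD^-$, and $M_1(s,t)$ in \eqref{e:Mst} accounts for exactly the first two lines of \eqref{e:M2}. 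The remaining two lines --- the cusp sum involving $L_\cuspa(3/2-s-it,f\times\bar g)$ and the $L(3/2-s+it,f\times\bar g)$ term --- are not functional-equation reflections of the diagonal; they are the residual (``$\Omega$'') contributions $M^+_\Omega(s,t)$ and $M^-_\Omega(s,t)$ extracted from \emph{inside} the spectral decompositions of the two shifted double Dirichlet series \eqref{e:Zdef} and \eqref{e:cM3}. Concretely, they come from the poles of $\scrL_\cuspa(s,it;z)$ at $z=1-s\pm it$ in the continuous-spectrum part, whose residues involve $\left<E_\cuspa(*,3/2-s\pm it),V_{f,g}\right>$ and hence, after unfolding, $L_\cuspa(3/2-s\mp it,f\times\bar g)$. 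Your plan declares $\scrE^\pm$ to be ``whatever survives'' the unfolding, i.e.\ the whole of the shifted-convolution contribution; followed literally, this buries half of $\scrM(s,t)$ inside the error terms, and the identity \eqref{e:second_intro} with $\scrM$ as in \eqref{e:M2} does not come out.

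A second, smaller issue: you assert that \eqref{e:innerbound} guarantees convergence of the spectral expansion and that the rest is bookkeeping. For the $+$-shifted series $Z(s,v;it)$ this is false as stated: its Dirichlet-series representation converges only for $\Re(s-v)$ large while its spectral expansion converges only for $\Re(s-v)<-\frac{k}{2}$, with a critical strip in between where neither converges (Proposition~\ref{prop:Z}, following \cite[Proposition~7.1]{HHR}). Reaching $\Re(s)=\frac{1}{2}$ requires the contour manipulations of \S\ref{s:OD+}, moving the $v$-line back and forth past the poles of $M(s-v+1/2,r_j)$ and of $\scrL_\cuspa$, and it is precisely this continuation that separates the $\Omega$-residues (main term) from the pieces listed in \eqref{e:E_OD+} (error). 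This is the analytic core of the theorem rather than an afterthought.
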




For our applications, we will use $h=h_{T, \alpha}$ as defined in \eqref{e:hdef}.  
Recall that for fixed $T\gg 1$ and $\frac{1}{3}< \alpha < \frac{2}{3}$,  
$h(r)$ decays exponentially when $|T-|r|| \gg T^\alpha$.

\begin{theorem}\label{thm:second_asymp}
Fix $T \gg 1$ and $\epsilon >0$. 
Take $s=\frac{1}{2}-it'$ in $S(s,t; f, g; h_{T, \alpha})$, defined in \eqref{e:second_intro}, 
for $t=0$ or $|t'|=|t|=T^\beta$ for $0<\beta< 1$. 
Then the second moment  $S(s, t; f, g; h)$  is given by 
\begin{equation}
S(1/2-it', t; f, g; h) = \scrM(1/2-it', t) + \scrO\left(T^{\alpha+ \max(\alpha,1-s(\alpha, \beta; t')+\epsilon})\right),
\end{equation}
where $s(\alpha, \beta; t')$ as given in \eqref{e:s_degree_error}. 
Here $\scrM(s, t)$ is given in \eqref{e:M2} and satisfies the upper bound
\begin{equation}
\scrM(s, t) \ll T^{1+ \alpha + \epsilon}.
\end{equation}

When $t=0$, as $T\to \infty$, 
\begin{equation}\label{e:scrM_1/2_t=0_asymp}
\scrM(1/2, 0) \sim T^{1+\alpha}P_d(\log T), 
\end{equation}
where $P_d(x)$ is a degree $d$ polynomial of x
and the degree $d=3$ when $f=g$ and $d=2$ when $f\neq g$. 
The leading coefficient $c_{f, g}$ is given by 
\begin{equation}\label{e:cfg_N}
c_{f, g} = 2^d\pi^{-\frac{3}{2}} \frac{2}{\zeta(2)} \prod_{p\mid N} \frac{1-p^{-1}}{1+p^{-1}} 
\begin{cases} 
L(1, f\times \bar{g}) & \text{ when } f\neq g, \\
\Res_{s=1}(s, f\times\bar{f}) & \text{ when }f=g. 
\end{cases}
\end{equation}
%
\end{theorem}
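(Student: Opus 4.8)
The plan is to start from the exact identity $S(s,t;f,g;h)=\scrM(s,t)+\scrE^+(s,t)+\scrE^-(s,t)$ of Theorem~\ref{thm:second_main}, specialize to $h=h_{T,\alpha}$ and $s=\tfrac12-it'$, and then (i) extract the size of $\scrM$, (ii) bound $\scrE^+$ and $\scrE^-$. First I would treat $\scrM(s,t)$: each of the four summands of \eqref{e:M2} is a product of zeta factors, a finite Euler product over $p\mid N$, a Rankin--Selberg $L$-value $L_\cuspa(\cdot,f\times\bar g)$ at a point of absolute convergence or its edge, and one of $H_0(0;h)$, $H_0(-2it;h)$, $H_0(-2s+1-2it;h)$, $H_0(-2s+1;h)$. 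Using the stated asymptotics for $H_0$ — namely $H_0(ix;h_{T,\alpha})\ll T^{1+\alpha}$ for $|x|\ll T^\beta$ with $\beta<1-\alpha$, and exponential decay in $T$ once $|x|=T^\beta$ with $1-\alpha<\beta<1$ — together with the convexity bound $\zeta(1+it)\ll\log(|t|+2)$ and subconvexity being unnecessary here, one reads off $\scrM(s,t)\ll T^{1+\alpha+\epsilon}$. For the $t=0$ refinement one sets $s=\tfrac12$, notes $H_0(-2it;h)$, $H_0(-2s+1-2it;h)$, $H_0(-2s+1;h)$ all become $H_0(0;h)$, uses \eqref{e:H0_x=0} so $H_0(0;h_{T,\alpha})\sim 2\pi^{-3/2}T^{1+\alpha}$, and expands $\zeta(2s)\zeta(1+2it)/\zeta(2s+1+2it)$ and the companion factors as Laurent series around $s=\tfrac12$, $t=0$; the pole of $\zeta(1+2it)$ at $t=0$ cancels against — in fact combines with — the other factors, and the product $\prod_{p\mid N}\tfrac{1-p^{-1}}{1+p^{-1}}$ emerges from evaluating the finite Euler products at the diagonal point. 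When $f\ne g$, $L(1,f\times\bar g)$ is finite and nonzero and the logarithmic power is $(\log T)^2$ (two from $\zeta$-factors, the $H_0$ contributes no log); when $f=g$, $L(s,f\times\bar f)$ has a simple pole at $s=1$, contributing one more $\log T$, hence $(\log T)^3$, and its residue replaces $L(1,f\times\bar g)$. Collecting the constants gives \eqref{e:cfg_N}.

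The substantive part is bounding the error terms $\scrE^\pm(s,t)$, and this is where essentially all the work lies. These are the off-diagonal and dual-off-diagonal terms coming from the shifted-convolution / shifted-Dirichlet-series analysis; after inserting $h=h_{T,\alpha}$ they become contour integrals (or sums) of products of $\Gamma$-factors, $L$-functions of $f$ and of $g$, and arithmetic factors, against a kernel that localizes $|r_j|\sim T$ in a window of width $T^\alpha$. The strategy for each is: shift contours to expose the analytic continuation, apply Stirling to the $\Gamma$-ratios to convert them into smooth weights with the localization $|r|\in[T-CT^\alpha,\,T+CT^\alpha]$, apply the approximate functional equations to $L(\cdot,f)$ and $L(\cdot,\bar g)$ to replace them by Dirichlet polynomials of length $\ll T^{1+\epsilon}$, and then estimate the resulting exponential sums. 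The three regimes in \eqref{e:s_degree_error} correspond to which mechanism controls the sum: when $\beta<\min\{2\alpha,1-\alpha\}$ or $\beta=1-\alpha$ a clean stationary-phase / large-sieve argument gives the saving $T^{(3\alpha-1)/2}$ (this is the analogue of the Ivić--Jutila $N=1$ estimate); when $1-\alpha<\beta<2\alpha$ and $t'=t$ the phases partially align and one loses, via the triple-product input \eqref{e:innerbound}, a factor controlled by $(2\alpha-\beta)\tfrac{k+1}{2}$; and when $t'=-t$ with the delicate relation $2\beta-1+\delta=\alpha$ a further degeneration of the phase forces the weaker saving $1-\tfrac{3\beta}{2}+\tfrac{\delta k}{2}$. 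In each case the final error is packaged as $\scrO(T^{\alpha+\max(\alpha,\,1-s(\alpha,\beta;t')+\epsilon)})$: one factor $T^\alpha$ from the spectral window width, and the rest from the bound on a single localized fourth-moment-type quantity.

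The main obstacle — the step I expect to be hardest and to consume the bulk of the argument — is the estimation of the off-diagonal error term $\scrE^+$ (and its dual $\scrE^-$) in the transitional regime $1-\alpha<\beta$, where the $t$-aspect length and the spectral window interact and the naive bound is insufficient. Here one must combine the spectral large sieve (or the triple-product bound \eqref{e:innerbound}, with its implied constant's unavoidable $N,k$-dependence, which is exactly why the theorem cannot track $N$ and $k$) with a careful stationary-phase analysis of an oscillatory integral whose phase has a parameter-dependent critical point; controlling the error when the critical point approaches the endpoints of the $T^\alpha$-window, and bookkeeping the several shifted $L$-values simultaneously, is the technical heart. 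Once $\scrE^\pm\ll T^{\alpha+\max(\alpha,1-s(\alpha,\beta;t')+\epsilon)}$ is established, comparison with $\scrM\ll T^{1+\alpha+\epsilon}$ and, for $t=0$, with $\scrM\sim c_{f,g}T^{1+\alpha}(\log T)^d$, finishes the proof, since in every admissible $(\alpha,\beta)$ range one checks $s(\alpha,\beta;t')>0$ and $1+\alpha>\alpha+\max(\alpha,1-s(\alpha,\beta;t'))$, so $T^{1+\alpha+\epsilon}$ dominates.
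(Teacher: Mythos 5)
Your overall architecture matches the paper's: Theorem~\ref{thm:second_asymp} is assembled from the exact decomposition of Theorem~\ref{thm:second_main}, an upper bound and a $t=0$ asymptotic for $\scrM$, and upper bounds for $\scrE^\pm$. But two of your key mechanisms are not the ones that make the proof work, and as described they would fail. The error terms $\scrE^\pm$ are, after the spectral decomposition of the shifted double Dirichlet series, sums over the Maass spectrum of $\scrL(s,it;\overline{u_j})\left<u_j,V_{f,g}\right>$ (plus continuous analogues) integrated against ratios of gamma factors; no $L$-functions of $f$ or $g$ appear in them, so ``apply the approximate functional equation to $L(\cdot,f)$ and $L(\cdot,\bar g)$ and estimate the resulting exponential sums by stationary phase'' has nothing to act on. What the paper does (Propositions~\ref{prop:ub_OD+ccint}, \ref{prop:OD+res_upper}, \ref{prop:OD+Omega_upper} and \ref{prop:OD-up}) is: use Stirling to isolate the region with no exponential decay, move $\sigma_u$ out to $CT^{\alpha}$ to force $|r_j|\ll T^{1-\alpha}$ (or $T^{\beta}$), and then invoke Lemma~\ref{lem:sieve}, which bounds $\sum_{|r_j|<T^C}\scrL(1/2-it',it;\overline{u_j})\left<u_j,V_{f,g}\right>$ by Cauchy--Schwarz against the triple-product bound \eqref{e:innerbound} and the spectral large sieve applied to $L(1/2\pm 2it,u_j)^2L(1/2,u_j)^2$; the approximate functional equation is applied to these $u_j$-$L$-functions, not to $f$ and $g$, and there is no stationary-phase analysis anywhere. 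The three cases of \eqref{e:s_degree_error} then arise from which residue (at $\sigma_v=\tfrac12$ after shifting the $v$-contour) dominates and from the signs of the $\sigma_v$- and $k$-coefficients in the resulting exponents, not from a degeneration of a critical point.

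Second, your accounting of the logarithms at $t=0$ is wrong: you assert the $(\log T)^2$ comes ``two from $\zeta$-factors, the $H_0$ contributes no log.'' At $t=0$ the $\zeta$-factors (and, for $f=g$, the pole of $L(s,f\times\bar f)$) produce poles of order $d$ whose mutual cancellation leaves precisely the $d$-th derivative of $H_0(\mp 2it;h)$ at $t=0$, and it is this derivative --- via $\bigl(\psi(ir+\tfrac k2)+\psi(-ir+\tfrac k2)\bigr)^d\sim 2^d(\log|r|)^d$ with $|r|\sim T$ --- that produces both the $(\log T)^d$ and the factor $2^d$ in \eqref{e:cfg_N}. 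Without this step you cannot ``collect the constants'' to obtain $c_{f,g}$.
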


\begin{remark}
More specifically, for general $N$, when $f \ne g$ and $s = \frac{1}{2}\pm it$, $\scrM(s, t)$ is given in \eqref{e:scrM_fneqg_1/2-it} and  \eqref{e:scrM_fneqg_1/2+it}.  
When $f = g$ and $s = \frac{1}{2} \pm it$, $\scrM(s, t)$ is given by \eqref{e:scrM_f=g_1/2-it} and \eqref{e:scrM_f=g_1/2+it}. 
\end{remark}

 
\subsection{Applications}
The discussion in the previous section, combined with Theorems~\ref{thm:N=1}, Theorem~\ref{thm:second_main} and Theorem~\ref{thm:second_asymp}
has the following two immediate corollaries:

\begin{corollary}\label{cor:nonvanishing_1}
Let $N\geq 1$, $k\geq 4$ be an even integer and $f$ and $g$ be holomorphic newforms of level $N$ and weight $k$. 
Assume that $f\neq g$.  

For any $T \gg 1$ there will exist at least one newform $u_j$, for $\Gamma_0(N)$,  
with corresponding spectral parameter $r_j$, such that $|T-|r_j|| \ll T^{\frac{1}{3} +\epsilon}$
and $L(1/2, f\times u_j), L(1/2, \bar{g}\times \overline{u_j})$ are simultaneously non-zero.  
More generally, if $|t|= T^\beta$, with $\beta < \frac{2}{3}$, 
then the same is true of $L(1/2+it, f\times u_j), L(1/2+it, \bar{g}\times \overline{u_j})$, within the interval $|T-|r_j|| \ll T^{1 -\beta +\epsilon}$.  
 \end{corollary}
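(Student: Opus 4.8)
The plan is to argue by contradiction via a first- and second-moment comparison, using Theorem~\ref{thm:second_asymp} as the key input. Suppose that for some large $T$ no Maass newform $u_j$ with $|T-|r_j||\ll T^{1-\beta+\epsilon}$ (respectively $\ll T^{1/3+\epsilon}$ when $t=0$) has both $L(1/2+it,f\times u_j)$ and $L(1/2+it,\bar g\times\overline{u_j})$ nonzero. I would take $s=\tfrac12-it$, i.e.\ $t'=t$, and apply Theorem~\ref{thm:second_asymp} with $\alpha=\tfrac13+\epsilon$ when $t=0$, or more generally with $\alpha$ slightly larger than $\max(1-\beta,\tfrac13)$ so that both $\alpha<\tfrac23$ and the error exponent stays below $1+\alpha$; the point is that in the range $\beta<\tfrac23$ (equivalently $1-\beta>\tfrac13$) one can choose $\alpha$ in $(\tfrac13,\tfrac23)$ with $\alpha\geq 1-\beta$ up to $\epsilon$, and check from \eqref{e:s_degree_error} that $s(\alpha,\beta;t)>0$ and $1+\alpha$ dominates $\alpha+\max(\alpha,1-s(\alpha,\beta;t)+\epsilon)$. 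For this choice the main term $\scrM(\tfrac12-it,t)$, by the simplification noted after \eqref{e:scrM_N=1} (exponential decay of $H_0(\pm2it;h)$ when $1-\alpha<\beta<1$, or the $t=0$ asymptotic \eqref{e:scrM_1/2_t=0_asymp}), is genuinely of size $\asymp T^{1+\alpha}(\log T)^d$ with a positive leading constant $c_{f,g}$ coming from $L(1,f\times\bar g)\neq 0$ (or the residue when $f=g$, but here $f\neq g$). So $S(\tfrac12-it,t;f,g;h)\gg T^{1+\alpha}$.

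Next I would subtract off the continuous contribution: by \eqref{e:intpiece_asymp}, $S_\infty(\tfrac12-it,t;f,g;h)\ll T^{2\max(\alpha,\beta)+\epsilon}$, and for the chosen $\alpha$ with $\alpha\geq 1-\beta$ we have $2\beta\le 1+\alpha-\epsilon'$ and $2\alpha<1+\alpha$, so the continuous piece is $o(T^{1+\alpha})$ and the discrete cuspidal sum
\[
D(T):=\sum_{j}\frac{h(r_j)}{\cosh(\pi r_j)}|\rho_j(1)|^2 L(1/2+it,f\times u_j)L(1/2+it,\bar g\times\overline{u_j})
\]
is itself $\gg T^{1+\alpha}$. (Here I am using that for $N=1$ every $u_j$ is a newform; for general $N$ one must first discard the contribution of Maass oldforms, and I would note that the relation between $\scrL(s,f\times u_j)$ and the newform $L$-functions, together with the bound \eqref{weight}, keeps the oldform contribution of the same quality — this is part of the bookkeeping already set up in \S\ref{ss:factorization_scrL}.) Under the contradiction hypothesis every summand in $D(T)$ vanishes except possibly those $j$ with $|T-|r_j||\gg T^\alpha$, where $h(r_j)$ decays exponentially; using the weight bounds \eqref{weight}, the convexity bound $L(1/2+it,f\times u_j)\ll (1+|r_j|)^{A}$ and the Bernstein–Reznikov estimate \eqref{e:innerbound} (or a standard Weyl-law count of $r_j$) to bound the number of such terms, that exponential tail is $O_A(T^{-100})$, contradicting $D(T)\gg T^{1+\alpha}$.

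Hence at least one $u_j$ in the window $|T-|r_j||\ll T^\alpha$ has both $L$-values nonzero; taking $\alpha\downarrow\tfrac13+\epsilon$ when $t=0$ and $\alpha\downarrow(1-\beta)+\epsilon$ in general gives exactly the stated intervals. The main obstacle is verifying, across the piecewise definition \eqref{e:s_degree_error}, that there is always a legal $\alpha\in(\tfrac13,\tfrac23)$ for which simultaneously (i) $\alpha$ is as close to $\max(1-\beta,\tfrac13)$ as desired, (ii) $s(\alpha,\beta;t)>0$, and (iii) $1+\alpha>\alpha+\max(\alpha,1-s(\alpha,\beta;t)+\epsilon)$ and $1+\alpha>2\max(\alpha,\beta)$; for $t'=t$ the relevant cases are the first line ($\beta<\min\{2\alpha,1-\alpha\}$ or $\beta=1-\alpha$, giving $s=\tfrac{3\alpha-1}{2}>0$) and the second line ($1-\alpha<\beta<2\alpha$, giving $s=(2\alpha-\beta)\tfrac{k+1}{2}>0$), and a short interval-chasing argument shows the constraints are compatible precisely when $\beta<\tfrac23$. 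A secondary point to be careful about is that the simplified main term $2H_0(0;h)|\zeta(1-2it)|^2\zeta(2)^{-1}L(1,f\times\bar g)$ is nonzero: $L(1,f\times\bar g)\neq0$ by Rankin–Selberg (it is a pole-free point of a positive Euler product up to the harmless $f\neq g$ case), $H_0(0;h)\sim 2\pi^{-3/2}T^{1+\alpha}$ by \eqref{e:H0_x=0}, and $\zeta(1-2it)\neq0$ for $t$ real, so the lower bound $S\gg T^{1+\alpha}$ is legitimate.
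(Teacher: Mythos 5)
Your proposal is correct and follows essentially the same route as the paper: apply Theorem~\ref{thm:second_asymp} with $\alpha=\tfrac13+\epsilon$ (or $\alpha=1-\beta+\epsilon$ so that $H_0(\pm 2it;h)$ decays and the main term reduces to the single positive contribution involving $L(1,f\times\bar g)$), observe that the main term of size $T^{1+\alpha}$ dominates both the error term and the continuous contribution \eqref{e:intpiece_asymp}, and conclude that the discrete cuspidal sum, hence some individual product of $L$-values in the window, is non-zero. The only differences are cosmetic (contradiction framing) or supplementary detail the paper leaves implicit (the exponentially damped tail outside the window and the oldform bookkeeping for $N>1$), so no further comparison is needed.
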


\begin{proof}
Take $\alpha = \frac{1}{3} + \epsilon$ and $h=h_{T, \alpha}$.   
Taking $t=0$, we consider the second moment $S(1/2, 0; f, g; h)$. 
Then as observed in \eqref{e:scrM_1/2_t=0_asymp}, 
the main term $\scrM(1/2, 0; f, g; h)$ is asymptotic to $c_{f, g}T^{\frac{4}{3} + \epsilon}(\log T)^2$.
The contribution of the continuous part is, as seen in \eqref{e:intpiece_asymp}, 
$\scrO(T^{\frac{2}{3} + 2\epsilon + \epsilon''})$, and the error term is  
$\scrO(T^{\frac{4}{3} + \epsilon - \delta'})$, for some $\delta' >0$.
It follows that the discrete sum over $j$ dominates and is asymptotic to $c_{f, g}T^{\frac{4}{3} + \epsilon}(\log T)^2$.
Thus the discrete sum must be non-zero, and therefore one of the products of 
$L(1/2, f\times u_j) L(1/2, \bar{g}\times \overline{u_j})$ must  not vanish in the range $|T-|r_j|| \ll T^{\frac{4}{3} +\epsilon}$.   

For non-zero $|t|=T^\beta$, with $\beta<\frac{2}{3}$,  the same can be done by choosing $\alpha = 1-\beta + \epsilon$. 
Then $\beta >1-\alpha$ and because of the exponential decay of $H_0(\pm 2it,;h)$, 
when $f\neq g$, by \eqref{e:scrM_fneqg_1/2-it}, 
\begin{equation}
\scrM(1/2-it, t)
\sim H_0(0; h) \frac{|\zeta(1-2it)|^2}{\zeta(2)} L(1, f\times \bar{g}), 
\end{equation}
when $N=1$. 
When $N>1$ positive real coefficients depending on $N$ appear. 
See \eqref{e:scrM_fneqg_1/2-it} for details. 
The crucial fact is that, the right hand side is non-zero. 
This completes the proof of the corollary.
\end{proof}

This is the first simultaneous non-vanishing result that we are aware of for two distinct Rankin-Selberg convolutions 
for $\GL(2)\times \GL(2)$.

Taking $f=g$, $\alpha = \frac{1}{3}+\epsilon$ and bounding $\scrM(1/2-it, t)$ from above by $T^{\frac{4}{3}+ \epsilon}$, as all the terms in the spectral sum are non-negative,
another immediate consequence is

\begin{corollary}\label{cor:subconvexity}
Let $N\geq 1$, $k\geq 4$ be an even integer and $f$ be a holomorphic newform of level $N$ and weight $k$. 
Let $u_j$ be a Maass newform of level $N$ with the corresponding spectrum parameter $|r_j| \ll T$.  
For $|t|\leq T^{\frac{2}{3}}$, 
\begin{equation}
\left| L\left(1/2+it, f\times u_j\right) \right|
\ll  T^{\frac23+\epsilon}. 
\end{equation}
When $|t| = T^\beta$, with $\frac{2}{3} \le \beta \leq 2\alpha<1$, 
\begin{equation}
\left| L\left(1/2+it, f\times u_j\right) \right| \ll T^{\frac12+\epsilon}|t|^{\frac{1}{4}+\epsilon}. 
\end{equation}
In both cases the implied constant is independent of $t$ and $T$.
\end{corollary}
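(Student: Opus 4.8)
The plan is to deduce Corollary~\ref{cor:subconvexity} directly from Theorem~\ref{thm:second_asymp} (or equivalently Theorem~\ref{thm:N=1} with the $N$-dependence of Theorem~\ref{thm:second_asymp}) by the standard positivity/amplification-free argument: isolate a single term in the spectral sum by exploiting that every term is non-negative. First I would set $f=g$ and $s=\tfrac12-it$ (so $s'=t'=t$), and choose the test function $h=h_{T,\alpha}$ with $\alpha=\tfrac13+\epsilon$ in the regime $|t|\le T^{2/3}$, respectively $\alpha=\tfrac{\beta}{2}+\epsilon$ when $|t|=T^\beta$ with $\tfrac23\le\beta\le 2\alpha<1$ so that one is in the range $1-\alpha<\beta<2\alpha$. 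With $f=g$ the spectral part of $S(1/2-it,t;f,f;h)$ is
\[
\sum_j \frac{h(r_j)}{\cosh(\pi r_j)}\,|\rho_j(1)|^2\,\bigl|L(1/2+it,f\times u_j)\bigr|^2
\;+\;\bigl(\text{Eisenstein part}\bigr),
\]
where every summand is $\ge 0$, and likewise the continuous contribution is a non-negative integral. Hence the full left side of Theorem~\ref{thm:second_asymp} is bounded below by the single term coming from the fixed Maass newform $u_j$ under consideration, which we may assume satisfies $|T-|r_j||\ll T^\alpha$ (choose $T\asymp |r_j|$, legitimate since $|r_j|\ll T$), times the weight $\frac{h(r_j)}{\cosh(\pi r_j)}|\rho_j(1)|^2$.

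The key estimates are then: (i) by construction of $h_{T,\alpha}$, $h(r_j)\gg 1$ when $|T-|r_j||\ll T^\alpha$; (ii) by \eqref{weight}, $|\rho_j(1)|^2/\cosh(\pi r_j)\gg (1+|r_j|)^{-\epsilon}\gg T^{-\epsilon}$; and (iii) by Theorem~\ref{thm:second_asymp}, $S(1/2-it,t;f,f;h)=\scrM(1/2-it,t)+\scrO(T^{\alpha+\max(\alpha,1-s(\alpha,\beta;t))+\epsilon})\ll T^{1+\alpha+\epsilon}$, the error being dominated by the main-term bound. Combining,
\[
\bigl|L(1/2+it,f\times u_j)\bigr|^2 \;\ll\; T^{\epsilon}\cdot S(1/2-it,t;f,f;h)\;\ll\; T^{1+\alpha+\epsilon}.
\]
Plugging $\alpha=\tfrac13+\epsilon$ gives $|L(1/2+it,f\times u_j)|\ll T^{2/3+\epsilon}$ for $|t|\le T^{2/3}$, while $\alpha=\tfrac{\beta}{2}+\epsilon$ gives $|L(1/2+it,f\times u_j)|^2\ll T^{1+\beta/2+\epsilon}$, i.e. $|L(1/2+it,f\times u_j)|\ll T^{1/2+\epsilon}|t|^{1/4+\epsilon}$, as claimed. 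One should double-check that in the second regime the conditions $\tfrac13<\alpha<\tfrac23$ and $1-\alpha<\beta<2\alpha$ are genuinely met, which forces exactly $\tfrac23\le\beta<1$, matching the stated range, and that $s(\alpha,\beta;t)=(2\alpha-\beta)\tfrac{k+1}{2}>0$ there so the error term is indeed smaller than $T^{1+\alpha+\epsilon}$.

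The main obstacle — really the only subtle point — is verifying that the main term $\scrM(1/2-it,t)$ itself obeys the clean upper bound $\ll T^{1+\alpha+\epsilon}$ uniformly in $t$ in the relevant ranges, rather than something larger; this is asserted in Theorem~\ref{thm:second_asymp} but its use here relies on the ranges of $\alpha,\beta$ being consistent (in particular $1+\alpha>2\max(\alpha,\beta)$ is not needed for the upper bound, only that the error is beaten, which follows from $s(\alpha,\beta;t)>0$). A secondary bookkeeping point is that the normalization of $L(1/2+it,f\times u_j)$ in Theorem~\ref{thm:N=1}/\ref{thm:second_asymp} carries the factor $|\rho_j(1)|^2$ rather than being arithmetically normalized, so one must invoke \eqref{weight} in the direction that gives a lower bound $|\rho_j(1)|^2/\cosh(\pi r_j)\gg T^{-\epsilon}$ to transfer the bound to $L(1/2+it,f\times u_j)$ cleanly; the loss is only $T^\epsilon$ and is absorbed. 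Finally, uniformity in $t$ and $T$ of the implied constant is immediate because we fixed $f$ (hence $k,N$) and all the cited estimates are uniform in $t,T$ once $f$ is fixed.
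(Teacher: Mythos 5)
Your proposal is correct and is essentially the paper's own argument: the paper derives Corollary~\ref{cor:subconvexity} precisely by taking $f=g$, $s=\tfrac12-it$, invoking non-negativity of every term in the spectral and continuous parts, and bounding the whole of $S(1/2-it,t;f,f;h_{T,\alpha})$ by $\scrM+\scrE\ll T^{1+\alpha+\epsilon}$ with $\alpha=\tfrac13+\epsilon$ (and, in the range $\tfrac23\le\beta<1$, with $\alpha$ chosen so that $2\alpha\ge\beta$, exactly as you do), then dropping all but the one term and using \eqref{weight} to remove the weight $|\rho_j(1)|^2/\cosh(\pi r_j)$ at a cost of $T^\epsilon$. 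Your verification that $s(\alpha,\beta;t)>0$ in both regimes, so the error is dominated by $T^{1+\alpha+\epsilon}$, is the same bookkeeping the paper relies on.
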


We specialize to Maass newforms $u_j$ simply because the notation for the Rankin-Selberg convolution becomes 
more complicated when $u_j$ is not a newform, 
and oldform bounds can be reduced to newform bounds, with an extra dependence on the level. 
Also, although we restrict our results to $k\geq 4$, $k=2$  
can also be covered with more careful treatments of the lines of integration separating poles of gamma functions.  

We must emphasize that our estimate for $L\left(1/2+it, f\times u_j\right)$ is only new in the sense that it is valid for arbitrary level.  It is  the same Weyl type estimate 
as in  \cite{Ivic-Jutila} and \cite{LLY06}, which are for level $1$, although   
the result of \cite{LLY06} does not give the dependence on $t$  
The result of \cite{Ivic-Jutila} is uniform in $t$, giving the same estimate as ours in the range $0 \le \beta <1$.   
They also extend the range of  $\beta$ beyond $1$. 
As remarked before, our methods will also extend to $\beta \ge 1$ but we have chosen not to do this.

To summarize, we have obtained an expression for the spectral moment of a degree $8$ Euler product, 
which is the product of two Rankin-Selberg convolutions, as a main term plus a sharp error term.  
Except for the level $N$, and weight $k$ this error term is uniform in the parameters $T$ and $t$. 
Other estimates for degree 8 moments that we are aware of are \cite{BK17} and \cite{Ivi02}. 
In \cite{BK17}, 
\begin{equation}
\frac{1}{T^2} \sum_{j\ge 1}e^{-\frac{r_j^2}{T^2}}\frac{L(\frac12,u_j)L(\frac12, u_j\otimes\chi_d)L(\frac12, u_j\otimes f)}
{L(1,u_j,\vee^2)}
\end{equation}
is expressed as a main term plus an error, on the order of $T^{-\delta}$, for some $\delta >0$.  
Here the level is $1$,  $f$ is a Maass cuspform of eigenvalue $1/4+T^2$ and $\chi_d$ is the quadratic character of conductor $d$.  In \cite{Ivi02}, an expression is given in the level $1$ case for the spectral fourth moment of an $L$-series of a Maass form 
at $1/2$ as a main term plus an error.

In \cite{KMV} a second moment estimate for the absolute value squared of a Rankin-Selberg convolution 
is given in terms of a main term and an error estimate in the level aspect: 
\begin{equation}
\frac{1}{\left|S_k^*(q)\right|}
\sum_{f\in S_k^*(q)}\left| L\left(1/2, f\otimes g\right) \right|^2 = P(\log q)+ \scrO_{g,k,\epsilon}\left(q^{-\frac{1}{12}+\epsilon}\right),
\end{equation}
where $S_k^*(q)$ consists of cuspidal holomorphic newforms of level $q$ and weight $k$ and $P(x)$ is a cubic polynomial.
Also second moment subconvexity results in the level aspect are given in \cite{Z14}, 
and one for a moment of a degree $8$ Euler product in \cite{BKY}. 


\subsection{An outline of the paper}
In \S\ref{sect:secondconst} we recall the original form \cite[Theorem~1.1]{hln19} 
of our first moment paper for Rankin-Selberg convolutions, applied to a holomorphic cusp form $f$.   
This formula \eqref{e:K_first} is an expression for the first moment 
with no estimates applied to the potential error terms \eqref{e:L-holo_first} and \eqref{e:L+holo_first}.   
Each expression has a parameter $n\ge 1$.   
We remark that one of the main  innovations in this expression is the transformation and analytic continuation 
of the integral expression (inverse Mellin transforms of ratios of gamma functions) 
$F_1(s, u, \nu; x)$ and  $F_2(s, u, k; x)$ given in  \cite[Lemmas 3.3 and 3.5]{hln19}. 
To introduce the second Rankin-Selberg convolution we multiply 
by $\overline{b(n)}n^{-s-\frac{k-1}{2}}$, where $b(n)$ is a Fourier coefficient of another holomorphic cusp form $g$, 
and sum over $n \geq 1$.   
The result is a sum over $n$ of the main term of the first moment, 
plus two additional sums, corresponding to the two sources of the error term in  \cite[Theorem~1.5]{hln19}.   
We denote the sum over $n$ of the main term as $M_1(s, t)$ in  \eqref{e:def_Mst}, 
and the sum over $n$ of the two remaining terms as $\OD^\pm(s, t)$ \eqref{e:def_ODpm} 
obtaining
\begin{equation}
S(s,t; f, g; h)
= M_1(s, t) + \OD^+(s, t) + \OD^-(s, t).
\end{equation}
This $M_1(s, t)$, given in \S\ref{sect:M}, turns out to be half of our final main term.  

The next two sections, \S\ref{s:OD+} and \S\ref{s:OD-}, are devoted to studying $\OD^+(s, t)$ and $\OD^-(s, t)$ via analyses of the shifted double Dirichlet series.

In \S\ref{s:OD+} we begin the analysis of $\OD^+(s, t)$ and discover in \eqref{e:OD+_Z_pre}
that at its heart is the shifted double Dirichlet series
\begin{equation}
Z(s, v; it) = \zeta^{(N)}(2s) 
\sum_{n=1}^\infty \sum_{m=1}^\infty \frac{\sigma_{-2it}(m; N)m^{it} a(n+m)\overline{b(n)}}{m^{v}n^{s-v-\frac{1}{2}+k}}.
\end{equation}
Another innovation in this paper is that in the remainder of \S\ref{s:OD+} 
we build on the analysis of the shifted single Dirichlet series  in \cite{HHR} 
and obtain the spectral decomposition of $\OD^+(s, t)$, 
which is given in Proposition~\ref{OD+def}.   
It is worth remarking that the analysis is made far more difficult because the spectral decomposition does not converge everywhere.  
It contains a critical strip, and the Dirichlet series representation converges to the right of this critical strip, while the spectral representation converges to the left. 
See Proposition~\ref{prop:Z} and \cite[Proposition~5.1]{HHR} for details.

When obtained, the decomposition consists of a number of pieces all but one of which contribute to the error term.   
In the following subsections  all of these terms that contribute to the error term are bounded from above  
in Propositions~\ref{prop:ub_OD+ccint}, \ref{prop:OD+res_upper} and \ref{prop:OD+Omega_upper}.  
Then, in \S\ref{sect:main2}, a second piece of the main term, $M^+_{\Omega}(s, t)$, is computed, 
using the explicit
computations of the Fourier expansion due to Young in \cite{Y19}. 
This turns out to be one half of the remaining half of our final main term. 

In \S\ref{s:OD-} the spectral analysis of the third contribution, denoted $\OD^-(s, t)$, is performed.  
This involves another shifted double Dirichlet series given in \eqref{e:cM3}:
\begin{equation}
\zeta^{(N)}(2s) \sum_{m=1}^\infty \sum_{n=m+1}^\infty \frac{\sigma_{-2it}(m; N) m^{it} a(n-m)\overline{b(n)}}
{m^{s+\frac{k-1}{2}} n^{w+k-1}}.
\end{equation}
The subsum over $n$ is a  single shifted Dirichlet series, of the type considered originally by Selberg.  
Its spectral expansion converges everywhere and it is far easier to deal with.   The double sum too breaks into
several pieces
\begin{equation}
\OD^-(s, t) =  \OD_0^-(s, t) + \OD_{\Omega, \intg}^-(s, t)+\OD_{\Omega, \res}^-(s, t),
\end{equation}
of which $\OD_{\Omega, \res}^-(s, t)=M^-_\Omega(s, t)$, given in \eqref{e:OD-_Omegares}, 
is the remaining contribution to the main term. 
The explicit description is given in \S\ref{ss:MOmega-}. 
The upper bounds for $\OD_0^-(s, t)$ and  $\OD_{\Omega, \intg}^-(s, t)$ are given in  Proposition~\ref{prop:OD-up}.

\par\vspace{2\jot}\noindent
\textbf{Acknowledgements}.\ 
The authors would like to thank Matt Young, Wenzhi Luo  and Mehmet Kiral for
some very helpful discussions and comments, Peter Humphries for some valuable comments on the existing literature, 
and KIAS and POSTECH for providing a welcoming working
environment during part of the preparation of this paper.

\section{Construction of the second moment from the first moment}\label{sect:secondconst}

\subsection{The formula for first moments}\label{s:first}
Let $h(r)$ be a function satisfying the conditions in \S\ref{ss:main_1}. 
We use the notations in \S\ref{ss:basis_L2}, \S\ref{ss:notation2} and \S\ref{ss:second_N}. 

For each positive integer $n$, we recall the first moment function from \cite{hln19}, 
over the spectral expansion for $L^2(\Gamma_0(N)\bsl \HH)$: 
Let 
\begin{multline}\label{e:K_firstmoment_def}
K(s, f; n, h)
= \sum_j \frac{h(r_j)}{\cosh(\pi r_j)} \overline{\rho_j(n)} \scrL(s, f\times u_j) 
\\ + \sum_{\cuspa} \frac{1}{4\pi} \int_{-\infty}^\infty \frac{h(r)}{\cosh(\pi r)} 
\tau_\cuspa(1/2-ir, n) \scrL_\cuspa(s, ir; f) \; dr.
\end{multline}
Here $f$ is a holomorphic newform of weight $k$ for $\Gamma_0(N)$, with the Fourier expansion \eqref{e:f_Fourier}.
For $s=\frac{1}{2}+it$, by \cite[Theorem~1.2]{hln19}, we get
\begin{equation}
K(1/2+it, f; n, h)\label{e:K_first}
= M(1/2+it, f; n) + L^-(1/2+it, f; n) + L^+(1/2+it, f; n)
\end{equation}
where
\begin{equation}\label{e:M_first}
M(1/2+it, f; n)
= \zeta^{(N)}(1+2it) \frac{A(n)}{n^{\frac{1}{2}+it}} 
H_0(0; h)
+ (2\pi)^{4it}\zeta(1-2it) 
N^{-2it} \frac{\varphi(N)}{N} 
\frac{A(n)}{n^{\frac{1}{2}-it}} 
H_0(-2it; h), 
\end{equation}
where $H_0(ix; h)$ is given in \eqref{e:H0}, 
and $L^-(1/2+it, f; n)$ and $L^+(1/2+it, f; n)$ are given as follows: 
\begin{multline}\label{e:L-holo_first}
L^-(1/2+it, f; n)
= -\frac{(2\pi)^{2it} \cos(\pi it)}{\pi} 
\frac{4}{2\pi i} \int_{(\sigma_u)} \frac{h(u/i) u \tan(\pi u)}{\Gamma\left(u+it+\frac{k}{2}\right) \Gamma\left(-u+it+\frac{k}{2}\right)}
\\ \times \frac{1}{2\pi i} \int_{(\sigma_0)} \Gamma\left(u-v\right) \Gamma\left(-u-v\right) 
\Gamma\left(\frac{k}{2}-it+v\right) \Gamma\left(\frac{k}{2}+it+v\right)
n^v \sum_{m=1}^{n-1} \frac{a(m)\sigma_{-2it}(n-m; N)}{(n-m)^{v-it+\frac{k}{2}}}
\; dv \; du
\end{multline}
and
\begin{multline}\label{e:L+holo_first}
L^+(1/2+it, f; n)
=i^k (2\pi)^{2it}
\frac{4}{2\pi i}\int_{(\sigma_u)} \frac{h(u/i)u}{\cos(\pi u)}
\frac{1}{\Gamma\left(-u+it+\frac{k}{2}\right)\Gamma\left(u+it+\frac{k}{2}\right)}
\\ \times \frac{1}{2\pi i} \int_{(1+\frac{k}{2}+\epsilon)}
\frac{\Gamma\left(v-it\right) \Gamma\left(v+it\right) \Gamma\left(-v+u+\frac{k}{2}\right)}
{\Gamma\left(v+u+1-\frac{k}{2}\right)}
n^{v-\frac{k}{2}} \sum_{m=1}^\infty \frac{\sigma_{-2it}(m; N)a(n+m)}{m^{v-it}} \; dv \; du.
\end{multline}
Here we assume that $1< \sigma_u < \frac{3}{2}$ and $-\frac{k}{2}< \sigma_0 < -\sigma_u$, 
so the poles of gamma functions in both $L^\pm$ are separated. 
Note that the sum over $m\geq 1$ in $L^+$ converges absolutely for $\Re(v)= 1+\frac{k}{2}+\epsilon$. 
Moreover the contour integrals in $L^\pm(1/2+it, f; n)$ converge absolutely.  
Here we also have 
\begin{equation}\label{e:sigma_1N}
\sigma_{-2it}(m; N)
= \begin{cases} 
\frac{N^{-2it}}{\prod_{p\mid N} p} P_N(1/2+it, m) \sigma_{-2it}^{(N)}(m) & \text{ for } \frac{N}{\prod_{p\mid N} p} \mid m, \\
0 & \text{ otherwise}
\end{cases}
\end{equation}
where 
\begin{equation}
\sigma_{-2it}^{(N)}(m) = \sum_{\substack{d\mid m, \\ \gcd(d, N)=1}} d^{-2it} 
\end{equation}
and 
\begin{equation}\label{e:P_M}
P_M(s, n) 
= 
\prod_{p\mid M} \frac{p^{(1-2s)(\ord_p(n)+1)}p^{-(1-2s)(\ord_p(M)-1)}(1-p^{2s})+ p-1}{1-p^{1-2s}}, 
\end{equation}
for a positive integer $M$. 
Note that this $P_M(s, n)$ is different from $P_M(2s-1, n; 1)$ defined in \cite{hln19} by the factor $\frac{M^{1-2s}}{\prod_{p\mid M} p}$, 
i.e., $P_M(s, n) = \frac{M^{1-2s}}{\prod_{p\mid M} p} P_{M}(2s-1, n; 1)$.

\subsection{Construction of second moments}
Recall that $g$ is a holomorphic newform of level $N$ and weight $k$, with Fourier expansion given by  \eqref{e:g_Fourier}.

The aim of this section is to perform the first step in studying the asymptotic behavior of the second moment of the Rankin-Selberg convolution $S(s, t; f, g; h)$ given in \eqref{e:secondmoment_intro}. 

Recalling \eqref{e:secondmoment_intro} and \eqref{e:K_firstmoment_def}, 
multiplying by $\overline{b(n)}n^{-s-\frac{k-1}{2}}$ and summing over $n\geq 1$, 
we have, for $s\in \C$ with sufficiently large $\Re(s)>1$, 
\begin{equation}
S(s, t; f, g, h)
= \zeta^{(N)}(2s) \sum_{n=1}^\infty \frac{\overline{b(n)}}{n^{s+\frac{k-1}{2}}} K(1/2+it, f; n, h).
\end{equation}
 Then referring to \eqref{e:K_first},
\begin{equation}\label{e:secondmoment}
S(s,t; f, g; h)
\\ = M_1(s, t) + \OD^+(s, t) + \OD^-(s, t), 
\end{equation}
where
\begin{equation}\label{e:def_Mst}
M_1(s, t)
= \zeta^{(N)}(2s) \sum_{n=1}^\infty \frac{\overline{b(n)}}{n^{s+\frac{k-1}{2}}} M(1/2+it, f; n)
\end{equation}
and 
\begin{equation}\label{e:def_ODpm}
\OD^{\pm}(s, t)
= \zeta^{(N)}(2s) \sum_{n=1}^\infty\frac{\overline{b(n)}}{n^{s+\frac{k-1}{2}}}  L^{\pm}(1/2+it, f, ; n).
\end{equation}


Our goal is to establish an explicit expression for $S(s,t; f, g; h)$ on $\Re(s)=\frac{1}{2}$ as a main term plus an error term.
Ultimately, when establishing upper bounds we will only examine the two cases $t' = \pm t$.  
However most of our exposition is valid for any pair $t$, $t'$.   
For a fixed $T\gg 1$ and $h=h_{T, \alpha}$ as in \eqref{e:hdef}, 
we also set $|t| = T^\beta$ with $\beta <1$ and $\beta =0$ when $t=0$.
Our method can deal with larger $\beta$ (obtaining somewhat worse bounds, as is the case in \cite{JM05}) but restricting to $|t|<T$ simplifies arguments considerably.

\subsection{A description of $M_1(s,t)$}\label{sect:M}
Recalling the description of $M(1/2+it, f; n)$ in \eqref{e:M_first}, we have 
\begin{multline}\label{e:Mst}
M_1(s, t)
=\zeta^{(N)}(2s) \sum_{n=1}^\infty \frac{\overline{b(n)}}{n^{s+\frac{k-1}{2}}} M(1/2+it, f; n)
\\ = \frac{\zeta(2s) \zeta(1+2it)}{\zeta(2s+1+2it)}
\prod_{p\mid N} \frac{(1-p^{-2s})(1-p^{-1-2it})}{1-p^{-2s-1-2it}} 
L(s+1/2+it, f\times \bar{g}) 
H_0(0; h)
\\ + \frac{(2\pi)^{4it} \zeta(2s)\zeta(1-2it)}{\zeta(2s+1-2it)} 
N^{-2it} \prod_{p\mid N} \frac{(1-p^{-1})(1-p^{-2s})}{1-p^{-2s-1+2it}}
L(s+1/2-it, f\times \bar{g})
H_0(-2it; h). 
\end{multline}
Here $H_0(*; h)$ is given in \eqref{e:H0}. 
Note that $M(s, t)$ has a meromorphic continuation to all $s \in \C$.

\section{Analysis of $\OD^+(s, t)$ and the shifted double Dirichlet series}\label{s:OD+}

Our aim in \S\ref{ss:OD+shiftedDDS}-- \S\ref{sect:4.6} is to prove the following proposition. 
\begin{proposition}\label{prop:OD+_decomp}
For $t\in \R$, $\OD^+(s, t)$ has a meromorphic continuation to $\Re(s)\geq \frac{1}{2}$. 
For $\Re(s)=\frac{1}{2}$, we have
\begin{equation}\label{e:OD+_decomp}
\OD^+(s, t) = M^+_{\Omega}(s, t) + \scrE^+(s, t), 
\end{equation}
where $M^+_{\Omega}(s, t)$ is given in \eqref{e:MOmega_explicit} and 
\begin{multline}\label{e:E_OD+}
\scrE^+(s, t) = \OD^+_{\cusp, \intg}(s, t)+\OD^+_{\cusp, \res}(s, t) 
+ \OD^+_{\cont, \intg}(s, t) + \OD^+_{\cont, \res}(s, t)
\\ + \OD^+_{\Omega, \intg}(s, t) 
+ \sum_{\ell=1}^{\frac{k}{2}} \OD^+_{\Omega, \res, 1}(s, t; \ell) + \sum_{\ell=0}^{\frac{k}{2}} \OD^+_{\Omega, \res, 2}(s, t; \ell).
\end{multline}
Here all pieces in $\scrE^+(s, t)$ are given in \eqref{e:OD+cuspint}, \eqref{e:OD+cuspres}, 
\eqref{e:OD+contint}, \eqref{e:OD+contres}, \eqref{e:OD+Omegaint}, \eqref{e:OD+Omegares1} and \eqref{e:OD+Omegares2} 
respectively. 
\end{proposition}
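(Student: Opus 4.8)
The plan is to analyze $\OD^+(s,t)$, defined in \eqref{e:def_ODpm} as the sum over $n\geq 1$ of $\overline{b(n)}n^{-s-\frac{k-1}{2}}$ times $L^+(1/2+it,f;n)$ from \eqref{e:L+holo_first}, by first exchanging the order of summation over $n$ with the double contour integral over $u$ and $v$. For $\Re(s)$ sufficiently large this exchange is justified by absolute convergence: the inner sum over $m$ in \eqref{e:L+holo_first} converges absolutely on $\Re(v)=1+\frac{k}{2}+\epsilon$, the contour integrals converge absolutely, and the extra factor $\overline{b(n)}n^{-s-\frac{k-1}{2}}$ improves convergence in $n$ for $\Re(s)$ large. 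After the exchange, the arithmetic content collapses into the shifted double Dirichlet series
\begin{equation*}
Z(s,v;it) = \zeta^{(N)}(2s)\sum_{n=1}^\infty\sum_{m=1}^\infty \frac{\sigma_{-2it}(m;N)m^{it}a(n+m)\overline{b(n)}}{m^v n^{s-v-\frac12+k}},
\end{equation*}
as indicated in \S\ref{s:OD+} following \eqref{e:OD+_Z_pre}. This rewrites $\OD^+(s,t)$ as a double contour integral of $Z(s,v;it)$ against an explicit ratio of gamma functions and the weight $h$.

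Next I would invoke the meromorphic continuation and spectral decomposition of $Z(s,v;it)$ coming from Proposition~\ref{prop:Z} (and the parallel analysis in \cite[Proposition~5.1]{HHR}): $Z$ continues past the region of absolute convergence, picking up a cuspidal spectral sum, an Eisenstein/continuous integral over the cusps $\cuspa$, residual terms, and a ``diagonal'' $\Omega$-piece built from the Fourier expansions of the relevant Poincar\'e-type series; only one of these — the $\Omega$ residue contribution, made explicit via Young's formulas in \cite{Y19} — survives into the main term $M^+_\Omega(s,t)$ of \eqref{e:MOmega_explicit}, while all others feed $\scrE^+(s,t)$. The subtlety flagged in the introduction is that the Dirichlet series expansion of $Z$ and its spectral expansion converge in complementary half-planes separated by a critical strip, so one cannot naively substitute the spectral form; I would move the $v$-contour (and simultaneously keep $\Re(s)=\frac12$ in view) carefully through this strip, crossing the poles of the gamma factors at $v=-u+\ell$, $\ell=1,\dots,\frac{k}{2}$ and at $v=u+\frac{k}{2}-\ell$, $\ell=0,\dots,\frac{k}{2}$ (these produce the two residue sums $\sum_\ell \OD^+_{\Omega,\res,1}(s,t;\ell)$ and $\sum_\ell\OD^+_{\Omega,\res,2}(s,t;\ell)$), as well as the poles arising from the continuation of $Z$ itself, which split off the cuspidal and continuous integral and residue pieces $\OD^+_{\cusp,\intg}$, $\OD^+_{\cusp,\res}$, $\OD^+_{\cont,\intg}$, $\OD^+_{\cont,\res}$ and the remaining integral term $\OD^+_{\Omega,\intg}$. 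Matching each crossed residue and each shifted integral with the labelled expressions \eqref{e:OD+cuspint}–\eqref{e:OD+Omegares2} then yields the claimed identity \eqref{e:OD+_decomp}–\eqref{e:E_OD+} on $\Re(s)=\frac12$.

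The main obstacle is exactly the interchange of the spectral expansion with the contour integrals across the non-overlapping convergence regions: one must verify that after continuing $Z(s,v;it)$ into the critical strip the resulting spectral sums and Eisenstein integrals, when integrated against the (exponentially localized, by the conditions on $h$ and the gamma-ratio decay) $u$-kernel, converge absolutely and uniformly enough to permit moving the $v$-line to its final position and to legitimize term-by-term extraction of residues. This is where the triple-product bound \eqref{e:innerbound} and the polynomial bounds \eqref{weight} on $\rho_j(1)$ enter, controlling the cuspidal spectral sum, together with standard convexity bounds on the Eisenstein contributions; bookkeeping of the gamma-function poles (whether a given pole is to the left or right of the moving contour, and whether $k/2$ is an integer so that poles can collide) also requires care but is routine once the analytic-continuation framework of \cite{hln19, HHR} is in place. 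Finally, meromorphic continuation of $\OD^+(s,t)$ itself to $\Re(s)\geq\frac12$ follows because every piece on the right-hand side of \eqref{e:OD+_decomp} has been exhibited as such a continuation, so the identity, proved first for large $\Re(s)$, propagates to the line $\Re(s)=\frac12$ by uniqueness of analytic continuation.
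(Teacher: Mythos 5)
Your overall strategy is the paper's: exchange the $n$-sum with the contour integrals to produce $Z(s,v;it)$, invoke Proposition~\ref{prop:Z} for its meromorphic continuation and spectral expansion, and collect the terms of \eqref{e:E_OD+} as residues from contour shifts. However, there is a concrete error in the step that actually produces the main term. You attribute the two residue families $\OD^+_{\Omega,\res,1}(s,t;\ell)$ and $\OD^+_{\Omega,\res,2}(s,t;\ell)$ to poles of the $v$-kernel at $v=-u+\ell$ and $v=u+\frac{k}{2}-\ell$. Neither location is correct: the integrand of \eqref{e:OD+_Z_pre} contains no gamma factor with poles at $v=-u+\ell$ (the factor $\Gamma\left(v+u+1-\frac{k}{2}\right)$ sits in the denominator, so it contributes zeros, not poles), and $\Gamma\left(-v+u+\frac{k}{2}\right)$ has its poles at $v=u+\frac{k}{2}+\ell$, to the \emph{right} of the final contour $\Re(v)=1+\frac{k}{2}+\epsilon$, so they are never crossed. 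In the paper these two families arise from the gamma factors $\Gamma\left(2s-1\mp it-v\right)$ and $\Gamma\left(1\pm it-v\right)$ hidden inside $Z_{\Omega}(s,v;it)$ via the factor $M(s-v+1/2,(1-s\pm it)/i)$ of \eqref{e:ZOmega}; the residues are taken at $v=2s-1\mp it+\ell$ and $v=1\pm it+\ell$ respectively, and the main term $M^+_{\Omega}(s,t)$ is precisely the $\ell=0$ residue of the first family \eqref{e:MOmega}. With your pole locations the residue computation would not reproduce \eqref{e:OD+Omegares1}--\eqref{e:OD+Omegares2}, and in particular you would not recover $M^+_{\Omega}$.

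Relatedly, your single-pass contour shift skips the intermediate manoeuvre that makes the argument work: one must first move the $v$-line to $\Re(v)=\frac14$ while $\Re(s)$ is large (where $Z$ is still analytic), split off $\OD^+_{\Omega}(s,t)$ carrying the polar part $Z_{\Omega}$, and only then continue $s$ to $\Re(s)=\frac12$. The remaining piece, with $Z-Z_{\Omega}$ in the integrand, is pushed left to $\Re(s-v)<-\frac{k}{2}$ so the spectral expansion \eqref{e:Z_spec} converges absolutely, and then back right to $\Re(v)=\frac54+\frac{k}{2}$, crossing the spectral poles of $Z$ at $s-v=-\ell\pm ir_j$ (and their continuous analogues) to yield $\OD^+_{\cusp,\res}$ and $\OD^+_{\cont,\res}$ — that part of your outline is correct. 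The $\Omega$ piece is treated by a separate shift from $\Re(v)=\frac14$ to $\Re(v)=1+\frac{k}{2}+\epsilon$. Without isolating $Z_{\Omega}$ first, the substitution of the spectral expansion across the critical strip that you flag as the "main obstacle" cannot be carried out, because $Z_{\Omega}$ is exactly the obstruction (note the factor $\delta_{\Re(s)<1}$ in \eqref{e:Z_spec}).
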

This will be proved by obtaining a spectral expansion of $\OD^+(s, t)$, 
via an analysis of the shifted double Dirichlet series \eqref{e:Zdef} 
and a separation of the main term and the pieces that contribute to the error term. 

\subsection{$\OD^+(s, t)$ and the shifted double Dirichlet series $Z(s, v; it)$}\label{ss:OD+shiftedDDS}
Recalling \eqref{e:def_ODpm} and \eqref{e:L+holo_first}, 
for $s\in \C$ with sufficiently large real part 
we can bring the sum over $n$ inside, obtaining
\begin{multline}\label{e:OD+_Z_pre}
\OD^+(s, t)
= i^k (2\pi)^{2it}
\frac{4}{2\pi i}\int_{(\sigma_u)} \frac{h(u/i)u}{\cos(\pi u)}
\frac{1}{\Gamma\left(-u+it+\frac{k}{2}\right)\Gamma\left(u+it+\frac{k}{2}\right)}
\\ \times \frac{1}{2\pi i} \int_{(1+\frac{k}{2}+\epsilon)}
\frac{\Gamma\left(v-it\right) \Gamma\left(v+it\right) \Gamma\left(-v+u+\frac{k}{2}\right)}
{\Gamma\left(v+u+1-\frac{k}{2}\right)}
Z(s, v; it) \; dv \; du, 
\end{multline}
where 
\begin{equation}\label{e:Zdef}
Z(s, v; it) = \zeta^{(N)}(2s) 
\sum_{n=1}^\infty \sum_{m=1}^\infty \frac{\sigma_{-2it}(m; N)m^{it} a(n+m)\overline{b(n)}}{m^{v}n^{s-v-\frac{1}{2}+k}}
\end{equation}
is a shifted double Dirichlet series. 
Here $1+\epsilon < \sigma_u< \frac{3}{2}$ and $\Re(s)>\Re(v)+1=2+\frac{k}{2}+\epsilon$. 
We first show that in this region the series $Z(s, v; it)$ converges absolutely.

For any $n, m\geq 1$, 
\begin{equation}
\sigma_{-2it}(m; N)m^{it} a(n+m)\overline{b(n)} \ll n^{k-1} m^{\frac{k-1}{2}}, 
\end{equation}
so, as $\Re(s) > 1 + \Re(v)$, and $\Re(v)= 1+\frac{k}{2}+\epsilon$,
\begin{equation}
\sum_{n=1}^\infty \sum_{m=1}^\infty \frac{\sigma_{-2it}(m; N)m^{it} a(n+m)\overline{b(n)}}{m^{v}n^{s-v-\frac{1}{2}+k}}
\ll 
\sum_{n=1}^\infty \frac{1}{n^{\frac{3}{2}}} \sum_{m=1}^\infty \frac{1}{m^{\frac{3}{2}+\epsilon}}, 
\end{equation}
so in this range $Z(s, v; it)$ converges absolutely.

Our intention is to take the shifted double Dirichlet series $Z(s, v; it)$ 
and consider first the inner sum over $n\geq 1$: 
\begin{equation}
Z(s, v; it) 
= \zeta^{(N)}(2s) 
\sum_{m=1}^\infty \frac{\sigma_{-2it}(m; N)m^{it}}{m^{v}} D(s-v+1/2; m), 
\end{equation}
where 
\begin{equation}
D(w; m) = \sum_{n=1}^\infty \frac{a(n+m)\overline{b(n)}}{n^{w+k-1}}. 
\end{equation}
is the shifted Dirichlet series studied in \cite{HHR}. 
Its meromorphic continuation to all $w\in \C$ is given in \cite[Proposition~5.1]{HHR}. 
We then obtain the analytic properties of $Z(s, v; it)$ from the analytic properties of $D(s-v+1/2; m)$.   
The meromorphic continuation of $Z(s, v; it)$ is expressed by a spectral expansion involving  the following $L$-functions: 
for $\Re(s)>1$, let 
\begin{equation}\label{e:def_cL_uj}
\scrL(s, it; \overline{u_j})
= \zeta^{(N)}(2s) \sum_{m=1}^\infty \frac{\sigma_{-2it}(m; N) m^{it} \overline{\rho_j(m)}}{m^s}
\end{equation}
and 
\begin{equation}\label{e:def_cL_cuspa}
\scrL_{\cuspa} (s, it; ir)
= \zeta^{(N)}(2s) \sum_{m=1}^\infty\frac{\sigma_{-2it}(m; N) m^{it}\overline{\tau_{\cuspa}(1/2+ir; m)}}{m^{s}}.
\end{equation}
The details are described in the remainder of this section.

Let
\begin{equation}\label{e:M_Gamma}
M(s, z/i) = \frac{\sqrt{\pi} 2^{\frac{1}{2}-s} \Gamma\left(s-\frac{1}{2}-z\right) \Gamma\left(s-\frac{1}{2}+z\right) 
\Gamma\left(1-s\right)}
{\Gamma\left(\frac{1}{2}-z\right)\Gamma\left(\frac{1}{2}+z\right)}
\end{equation}
and note that it has poles at $s=1/2\pm z-\ell$ for non-negative integers $\ell$.

Let 
\begin{equation}
V_{f, g} (z) = f(z)\overline{g(z)} y^{k}.
\end{equation}
In the following proposition, we describe the analytic properties of $Z(s, v; it)$.
\begin{proposition}\label{prop:Z}
For $t\in \R$, the function $Z(s, v; it)$ has a meromorphic continuation to $(s, v)\in \C^2$. 
The function $Z(s, v; it)$ has poles at $s-v=-\ell\pm ir_j$ for integers $0\leq \ell <-\Re(s-v)$ with residues
\begin{equation}\label{e:Zcuspres}
\frac{(4\pi)^k 2^{-\ell\pm ir_j}}{2\sqrt{\pi}\Gamma\left(\pm ir_j +k-\ell-\frac{1}{2}\right)}
\sum_j  (-1)^{\epsilon_j} \big(\Res_{s-v=-\ell\pm ir_j}M(s-v+1/2, r_j)\big)
\scrL(s, it; \overline{u_j})  \left<u_j, V_{f, g}\right>.
\end{equation}
In addition, $Z(s, v; it)$ has poles corresponding to those of $Z_{\cont, \res}(s, v; it)$ and $Z_{\Omega}(s, v; it)$, 
which are given below.

When $\Re(s-v) <-\frac{k}{2}$, $Z(s, v; it)$ is expressed by the following absolutely convergent spectral expansion: 
\begin{equation}\label{e:Z_spec}
Z(s,v; it) = Z_{\cusp}(s, v; it) +  Z_{\cont, \intg}(s, v; it) + Z_{\cont, \res}(s, v; it) + \delta_{\Re(s)<1} Z_{\Omega}(s, v; it)
\end{equation}
where 
\begin{equation}\label{e:Zcusp}
Z_{\cusp}(s, v; it)
= \frac{(4\pi)^k 2^{s-v}}{2\sqrt{\pi}\Gamma\left(s-v+k-\frac{1}{2}\right)}
\sum_j  M(s-v+1/2, r_j) 
(-1)^{\epsilon_j} \scrL(s, it; \overline{u_j}) \left<u_j, V_{f, g}\right>,
\end{equation}
\begin{multline}\label{e:Zcontintg}
Z_{\cont, \intg}(s, v; it)
\\ = \frac{(4\pi)^{k} 2^{s-v}}{2\sqrt{\pi} \Gamma\left(s-v+k-\frac{1}{2}\right)}
\sum_{\cuspa}\frac{1}{4\pi i} \int_{(0)} 
M(s-v+1/2, z/i) \scrL_\cuspa(s, it; z) \left<E_{\cuspa}(*, 1/2+z), V_{f, g}\right>\; dz, 
\end{multline}
\begin{multline}\label{e:Zcontres}
Z_{\cont, \res}(s, v; it)
= \sum_{\ell=0}^{\lfloor-\Re(s-v)\rfloor} 
\frac{(4\pi)^{k} 2^{s-v}}{2\sqrt{\pi} \Gamma\left(s-v+k-\frac{1}{2}\right)}
\\ \times \big(\Res_{z=s-v-1/2+\ell} M(s-v+1/2, z/i)\big)
\sum_{\cuspa} \scrL_\cuspa(s, it; s-v+\ell-1/2) 
\left<E_{\cuspa}(*, s-v+\ell), V_{f, g}\right>
\end{multline}
and
\begin{multline}\label{e:ZOmega}
Z_{\Omega}(s, v; it)
= \frac{(4\pi)^{k} 2^{s-v}}{2\sqrt{\pi} \Gamma\left(s-v+k-\frac{1}{2}\right)}
\zeta(2s-1)
\\ \times \bigg\{
\sum_{\cuspa}
\frac{\scrP_{\cuspa}(s, it; 1-s+it)}{\prod_{p\mid N}(1-p^{1-2s+2it})}
\frac{\zeta(1+2it)M(s-v+1/2, (1-s+it)/i) }{\pi^{\frac{1}{2}-s+it} \Gamma\left(-\frac{1}{2}+s-it\right)} 
\left<E_{\cuspa}(*, 3/2-s+it), V_{f, g}\right>
\\ + \sum_{\cuspa}
\frac{\scrP_{\cuspa}(s, it; 1-s-it)}{\prod_{p\mid N}(1-p^{1-2s-2it})}
\frac{\zeta(1-2it)M(s-v+1/2, (1-s-it)/i) }{\pi^{\frac{1}{2}-s-it} \Gamma\left(-\frac{1}{2}+s+it\right)} 
\left<E_{\cuspa}(*, 3/2-s-it), V_{f, g}\right>
\bigg\}.
\end{multline}

Here, as given in \eqref{e:scrL_fac_Eis}, 
\begin{equation}
\scrL_{\cuspa}(s, it; z)
= \scrP_{\cuspa}(s, it; z)
\frac{\zeta(s+it+z)\zeta(s-it+z)\zeta(s+it-z)\zeta(s-it-z)}{\pi^{-\frac{1}{2}+z} \Gamma\left(\frac{1}{2}-z\right) \zeta^{(N)}(1-2z)} 
\end{equation}
and $\scrP_{\cuspa}(s, it; z)$ is an Euler polynomial, given in \eqref{e:scrP_Eis}, which is $O_N(1)$ when $\Re(s)= 1/2$.  
Also
\begin{equation}
\delta_{\Re(s)<1} = \begin{cases} 1 & \text{ if } \Re(s)<1\\ 0 & \text{ otherwise.}\end{cases}
\end{equation}
\end{proposition}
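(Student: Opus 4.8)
The plan is to obtain the meromorphic continuation of $Z(s,v;it)$ by using the factorization into an $m$-sum of shifted single Dirichlet series $D(w;m)$ from \cite{HHR}, for which \cite[Proposition~5.1]{HHR} supplies the full analytic continuation together with an explicit spectral expansion. First I would recall from \cite{HHR} that for each fixed $m$, $D(w;m)$ has a spectral decomposition of the form $D(w;m)=\sum_j (\text{coefficient involving }M(w,r_j),\ \rho_j,\ \langle u_j,V_{f,g}\rangle)\cdot(\text{arithmetic factor in }m)$ plus the analogous Eisenstein integral, plus a residual ``$\Omega$'' term which is present only when $\Re(w)<1/2$ (this is the source of the $\delta_{\Re(s)<1}$, since $w=s-v+1/2$ and $\Re(w)<1$ corresponds to $\Re(s)<1$ once one accounts for the shift by $v$; more precisely one tracks the condition under which the pole of the Eisenstein series at $3/2-s\pm it$ is crossed). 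The factor $M(w,z/i)$ defined in \eqref{e:M_Gamma} is exactly the Mellin–Barnes kernel (ratio of gamma functions) coming from unfolding $\langle y^{k-1}\cdot(\text{incomplete Poincar\'e series}),\ u_j\rangle$ against a $K$-Bessel function; its poles at $w=1/2\pm z-\ell$ give the residual cuspidal poles.

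The second step is to insert this spectral expansion of $D(s-v+1/2;m)$ into the $m$-sum $\zeta^{(N)}(2s)\sum_m \sigma_{-2it}(m;N)m^{it}m^{-v}D(s-v+1/2;m)$ and interchange the (finite-at-each-stage) $m$-sum with the spectral sum/integral. The $m$-sum then collapses the arithmetic $m$-dependence against $\rho_j(m)$ (respectively $\tau_\cuspa(1/2+ir;m)$) into precisely the twisted $L$-functions $\scrL(s,it;\overline{u_j})$ and $\scrL_\cuspa(s,it;ir)$ defined in \eqref{e:def_cL_uj} and \eqref{e:def_cL_cuspa}; this is a formal Dirichlet-series identity valid where everything converges absolutely, i.e.\ $\Re(s-v)<-k/2$ for the spectral side, which is why \eqref{e:Z_spec} is asserted only there. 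For the residual $\Omega$ piece one similarly sums the $m$-dependence of the residue of $D$ at its continuous-spectrum pole; the Fourier coefficient $\tau_\cuspa(3/2-s\pm it,m)$ of the residual Eisenstein series contracts against $\sigma_{-2it}(m;N)m^{it}m^{-v}$ to produce the Euler polynomial $\scrP_\cuspa(s,it;1-s\pm it)$ divided by $\prod_{p\mid N}(1-p^{1-2s\pm 2it})$, using the factorization \eqref{e:scrL_fac_Eis}. The kernel evaluated at $z=1-s\pm it$, namely $M(s-v+1/2,(1-s\pm it)/i)$, and the gamma/zeta normalization come out as displayed in \eqref{e:ZOmega}.

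The third step is to read off the poles. The poles of $Z$ in $s-v$ come from two sources: the gamma poles of the kernel $M(s-v+1/2,r_j)$ at $s-v=-\ell\pm ir_j$ (giving the cuspidal residues \eqref{e:Zcuspres}, where one picks up $\Res_{s-v=-\ell\pm ir_j}M$ times $\scrL(s,it;\overline{u_j})\langle u_j,V_{f,g}\rangle$ with the weight $(4\pi)^k 2^{-\ell\pm ir_j}/(2\sqrt\pi\,\Gamma(\pm ir_j+k-\ell-\tfrac12))$), together with the analogous continuous-spectrum residue sum $Z_{\cont,\res}$ obtained by collecting the residues of $M(s-v+1/2,z/i)$ at $z=s-v-1/2+\ell$ inside the $z$-integral \eqref{e:Zcontintg} as the contour is shifted — this is the standard ``moving the line of integration past the kernel poles'' argument that converts part of $Z_{\cont,\intg}$ into $Z_{\cont,\res}$. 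The $\Omega$-term contributes poles as well, coming from the Eisenstein series $\langle E_\cuspa(*,3/2-s\pm it),V_{f,g}\rangle$ and the factor $\zeta(2s-1)$. Patching: the expression \eqref{e:Z_spec} is proved for $\Re(s-v)<-k/2$, while the Dirichlet-series definition converges for $\Re(s-v)>1$; the two agree on the overlap of their continuations, which gives the meromorphic continuation to all of $\C^2$, and the bound $\scrP_\cuspa=O_N(1)$ on $\Re(s)=1/2$ follows from the explicit Euler product \eqref{e:scrP_Eis}.

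I expect the main obstacle to be the bookkeeping around the critical strip where neither representation converges: one must justify the analytic continuation by carefully tracking which poles of $M(w,z/i)$ are crossed as $\Re(w)=\Re(s-v)+\tfrac12$ decreases from the region of Dirichlet-series convergence to the region of spectral convergence, identifying exactly the finite set $0\le \ell<-\Re(s-v)$ contributing to $Z_{\cont,\res}$, and pinning down the precise threshold $\Re(s)<1$ at which the residual Eisenstein pole of $D$ activates the term $Z_\Omega$ (this is the content of \cite[Proposition~5.1]{HHR} transported through the $v$-shift). A secondary subtlety is the interchange of the $m$-summation with the spectral integral when $\Re(s-v)<-k/2$: one needs the triple-product bound \eqref{e:innerbound} (with the weight $e^{\pi|r_j|}$) together with the bounds \eqref{weight} on $\rho_j(1)$ and a convexity bound for $\scrL(s,it;\overline{u_j})$ to guarantee absolute convergence of the spectral expansion uniformly in the relevant region, so that Fubini applies and the claimed ``absolutely convergent spectral expansion'' is genuinely absolutely convergent.
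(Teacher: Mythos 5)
Your proposal follows the same route the paper takes: factor $Z(s,v;it)$ through the shifted single Dirichlet series $D(s-v+1/2;m)$ of \cite{HHR}, insert its spectral expansion from \cite[Proposition~5.1]{HHR}, collapse the $m$-sum into $\scrL(s,it;\overline{u_j})$ and $\scrL_\cuspa(s,it;z)$, and track the kernel poles of $M$ and the Eisenstein/$\Omega$ polar terms exactly as in \cite[Proposition~7.1]{HHR} --- which is precisely the argument the authors invoke when they omit the proof. Your identification of the two delicate points (the critical strip between the regions of convergence of the Dirichlet-series and spectral representations, and the use of the triple-product bound \eqref{e:innerbound} to justify the interchange) matches the issues the paper itself flags.
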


The factorization of $\scrL_{\cuspa}(s, it; z)$ is given in Lemma~\ref{lem:scrL_fac_Eis}, 
based on Young's explicit description of Fourier coefficients of Eisenstein series \cite{Y19}. 
In particular, $\scrP_\cuspa(s, it; 1-s\pm it)$ is given in Corollary~\ref{cor:scrP_Eis_pm} and Corollary~\ref{cor:scrP_Eis_pm}. 
Note that the factor $\prod_{p\mid N} (1-p^{1-2s\pm 2it})$ divides $\scrP_{\cuspa}(s, it; 1-s \pm it)$. 
The proof of the proposition is virtually identical to the argument given in \cite[Proposition~7.1]{HHR} and consequently is omitted.  

To simplify the formula for $Z_{\cont, \res}(s, v; it)$ and $Z_{\Omega}(s, v; it)$, 
we also use the fact that 
\begin{equation}\label{e:series_Eis_cusosum_fe}
\sum_{\cuspa} \scrL_{\cuspa}(s, it; ir) E_{\cuspa}(z, 1/2+ir)
= \sum_{\cuspa} \scrL_{\cuspa}(s, it; -ir) E_{\cuspa}(z, 1/2-ir), 
\end{equation}
induced from the functional equation of the sum of products of Eisenstein series over cusps \cite[(6.22')]{Iwa02}.

\begin{remark}
Inside of the integral in \eqref{e:Zcontintg}, note that $M(s-v+1/2, z/i)$ has poles at $s-v=-\ell\pm z$ 
for integers $0\leq \ell < -\Re(s-v)$. 
Moreover, $\scrL_{\cuspa}(s, it; z)$ has poles at $s=1-z\pm it$ and $s=1+z\pm it$. 
To get the meromorphic continuation of the function $Z(s, v; it)$ we need to separate those residues from the original integral, again using the same technique as in \cite[Proposition~7.1]{HHR}.   This leads to the construction of the separate terms $Z_{\cont, \intg}(s, v; it) + Z_{\cont, \res}(s, v; it) + Z_{\Omega}(s, v;it)$. 
Again, see \cite[\S7.1]{HHR} for further details. 
\end{remark}

\subsection{Factorization of Dirichlet series $\scrL(s, it; u_j)$ and $\scrL_{\cuspa}(s, it; z)$}\label{ss:factorization_scrL}
We will find it convenient to compute the factorization of the Dirichlet series $\scrL(s, it; u_j)$ 
and $\scrL_{\cuspa}(s, it; z)$. 

Let $\{u_j\}_{j\geq 1}$ be an orthonormal basis of Maass cusp forms of level $N$ as in \S\ref{s:intro}. 
Let $\scrA_{r}(N)$ be the space of Maass cuspforms of level $N$ with the Laplace eigenvalue $\frac{1}{4}+r^2$. 
Then we have the following orthogonal decomposition as in \cite{AL78}
\begin{equation}\label{e:cusp_furtherdecomp_new}
\scrA_{r}(N) = \bigoplus_{L\mid N} \bigoplus_{f\in \scrA^{\new}_{r}(L)} \Span\left\{f(dz)\;:\; d\mid \frac{N}{L}\right\}, 
\end{equation}
where $\scrA^{\new}_r(L)$ is the space of newforms of level $L$ with the Laplace eigenvalue $\frac{1}{4}+r^2$.
When a Maass cuspform $u_j$ with Laplace eigenvalue $\frac{1}{4}+r_j^2$ of level $N$, 
which is $L^2$-normalized for level $N$, is not a newform, 
then there exists $L\mid N$, a newform $u_{L, j}\in \scrA_r^{\new}(L)$, and constants $c_{L}(r_j; d)$ for $d\mid \frac{N}{L}$ 
such that 
\begin{equation}\label{e:uj_L_Fourier}
u_j(z) = \sum_{d\mid \frac{M}{L}} c_L(r_j; d) u_{L, j}(dz)
= \sum_{m\neq 0} \sum_{d\mid \frac{M}{L}} c_L(r_j; d) \rho_{L, j}(m) \sqrt{dy} K_{ir_j}(2\pi |m|dy)e^{2\pi imdx}.
\end{equation}
Here we assume that $u_{L, j}$ is $L^2$-normalized for level $L$, i.e., 
\begin{equation}
\left<u_{L, j}, u_{L, j}\right>_L=\int_{\Gamma_0(L)\bsl \HH} |u_{L, j}(z)|^2 \; d\mu(z) = 1. 
\end{equation}
 We also define the $L$-function for $u_j$ in terms of its Hecke eigenvalues, i.e., 
as the $L$-function for the newform $u_{L, j}$: 
\begin{equation}
L(s, u_j ) = L(s, u_{L, j}) = \sum_{m=1}^\infty \frac{\lambda_{u_j, L}(m)}{m^s}, 
\end{equation}
where $\lambda_{u_j, L}(m)$ is the $m$th Hecke eigenvalue of $u_{L, j}$. 

By \eqref{e:def_cL_uj}, recall that 
\begin{equation}
\scrL(s, it; u_j)
= \zeta^{(N)}(2s) \sum_{m=1}^\infty \frac{\sigma_{-2it}(m; N) m^{it} \rho_j(m)}{m^s}
\end{equation}
For convenience, we set $\lambda_{L, j}(m)=0$ when $m\notin \Z$.
With this notation, we get the following lemma. 
\begin{lemma}\label{lem:scrL_fac_cusp}
We now get
\begin{equation}\label{e:scrL_fac_cusp}
\scrL(s, it; u_j)
= L(s+it, u_j) L(s-it, u_j) 
N^{-s-it}\sum_{d\mid\frac{N}{L}} c_{L}(r_j; d) \rho_{L, j}(1) d^{\frac{1}{2}-it} 
\scrP_d(s, it; u_j), 
\end{equation}
where 
\begin{multline}
\scrP_d(s, it; \overline{u_j})
= 
\prod_{p\mid L} 
{\lambda_{L, j}(p^{\ord_p(\frac{N}{d})-1})} (-p^{-1+s-it}+ {\lambda_{L, j}(p)})
\\ \times \prod_{\substack{p\nmid L\\ p\mid \frac{N}{d}}} 
\bigg\{
\bigg(p-{\lambda_{L, j}(p)} p^{-s-it}- ((p-1)(1+p^{-2it})-p^{-4it}) + p^{1-2s}\bigg) 
p^{-1} {\lambda_{L, j}(p^{\ord_p(\frac{N}{d})-2})} 
+ {\lambda_{L, j}(p^{\ord_p(\frac{N}{d})})} \bigg\}
\\ \times \prod_{\substack{p\nmid L\\ p\mid N, p\nmid \frac{N}{d}}}
p^{-1} \big( {\lambda_{L, j}(p)} p^{-s-it} -(1+p^{-2it})p^{-2s}\big)
\end{multline}
\end{lemma}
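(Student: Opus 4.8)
\textbf{Proof proposal for Lemma~\ref{lem:scrL_fac_cusp}.}

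The plan is to compute the Dirichlet series $\scrL(s, it; u_j)$ directly from its definition in \eqref{e:def_cL_uj} by expanding all three arithmetic factors that appear in the summand --- the divisor-type coefficient $\sigma_{-2it}(m; N)$, the twist $m^{it}$, and the Fourier coefficient $\rho_j(m)$ --- and then reassembling the result as an Euler product. First I would substitute the expansion \eqref{e:uj_L_Fourier} of $u_j$ in terms of the newform $u_{L, j}$, so that $\rho_j(m) = \sum_{d\mid \frac{N}{L}} c_L(r_j; d)\rho_{L, j}(1)\,d^{\frac12}\lambda_{L, j}(m/d)$ (reading $\lambda_{L, j}(m/d)=0$ unless $d\mid m$, by the convention fixed just before the statement). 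Pulling the sum over $d$ outside, and writing $m = d m'$, the $d^{\frac12}$ combines with $m^{-s} = (dm')^{-s}$ and $m^{it} = (dm')^{it}$ to produce the prefactor $d^{\frac12 - it} \cdot d^{-s}$ that one sees in \eqref{e:scrL_fac_cusp} (the $N^{-s-it}$ is extracted below from $\sigma_{-2it}(dm'; N)$); what remains is $\sum_{m'} \sigma_{-2it}(dm'; N) (dm')^{it} \lambda_{L, j}(m') (m')^{-s}\,d^{\,\text{powers}}$ together with the leftover factor $\zeta^{(N)}(2s)$.

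The heart of the computation is then the Euler factor at each prime $p$. For $p\nmid N$ the coefficient $\sigma_{-2it}(p^a; N)$ is just $\sigma_{-2it}^{(N)}(p^a) = \sum_{j=0}^{a} p^{-2itj}$, the twist contributes $p^{ita}$, and $\lambda_{L, j}(p^a)$ satisfies the Hecke recursion with $L$-factor $(1 - \lambda_{L, j}(p)p^{-s} + p^{-2s})^{-1}$; a standard manipulation (the Rankin--Selberg-type identity $\sum_a \sigma_\nu(p^a)\lambda(p^a)x^a = \frac{\sum \dots}{(1-\alpha\beta x)(1-\alpha\beta' x)(1-\alpha' \beta x)(1-\alpha'\beta' x)}$, here with one pair of roots being $\{1, p^{-2it}\}$ scaled by $p^{it}$) shows that the product over $p\nmid N$ collapses, after multiplying by $\zeta^{(N)}(2s)$, into exactly $L(s+it, u_j) L(s-it, u_j)$ up to the finitely many Euler factors at $p\mid N$, which must be repaired. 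The repair at $p\mid N$ is where the work concentrates: one separates the primes according to whether $p\mid L$, or $p\nmid L$ but $p\mid \frac Nd$, or $p\nmid L$ and $p\mid N$ but $p\nmid\frac Nd$, and in each case one uses the explicit shape of $\sigma_{-2it}(p^a; N)$ from \eqref{e:sigma_1N}--\eqref{e:P_M} (recalling that $\sigma_{-2it}(m; N)$ is supported on multiples of $N/\prod_{p\mid N}p$ and carries the factor $N^{-2it}/\prod_{p\mid N}p$, which accounts for the $N^{-s-it}$ in front) and the degenerate local Hecke relations for the oldform --- at $p\mid L$ the eigenvalue satisfies $\lambda_{L, j}(p)^2 = \lambda_{L, j}(p^2) + \dots$ with the $U_p$-normalization, while for $p\nmid L$ one has the full two-variable recursion. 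Matching these against the three displayed local factors $\scrP_d(s, it; \overline{u_j})$ is then a bookkeeping check: each local factor is precisely the ratio of the ``true'' local Dirichlet series (divisor coefficient times twist times Hecke eigenvalue, summed over powers of $p$, with the shift by $\ord_p(N/d)$ coming from the constraint $N/\prod p \mid m$) to the two Euler factors of $L(s\pm it, u_j)$ at $p$.

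The main obstacle I expect is precisely this last step --- correctly tracking the exponent $\ord_p(N/d)$ in the supported congruence for $m$ and reconciling it with the different normalizations of $\lambda_{L, j}(p^a)$ for $p\mid L$ versus $p\nmid L$. Concretely, the summand forces $\frac{N}{\prod_{p\mid N}p}\mid m$, i.e. $\ord_p(m)\geq \ord_p(N)-1$ for every $p\mid N$; after writing $m=dm'$ this becomes a constraint on $\ord_p(m')$ depending on $\ord_p(d)$, hence on $\ord_p(N/d)$, and it is this shifted starting index in the geometric/Hecke sum that produces the factors $\lambda_{L, j}(p^{\ord_p(N/d)-1})$, $\lambda_{L, j}(p^{\ord_p(N/d)-2})$, $\lambda_{L, j}(p^{\ord_p(N/d)})$ appearing in the three products. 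Once one has pinned down, prime by prime, the local generating function $\sum_{a\geq a_0(p)} \sigma_{-2it}^{(N)}(p^a)\,p^{ita}\,\lambda_{L, j}(p^{a - \ord_p d})\,p^{-as}$ with the correct lower limit $a_0(p)$, and divided out the appropriate factors of $L(s\pm it, u_j)$ and $\zeta^{(N)}(2s)$, the identity \eqref{e:scrL_fac_cusp} follows by comparing Euler products. I would organize the proof as: (i) the reduction to newform data via \eqref{e:uj_L_Fourier}; (ii) the $p\nmid N$ computation giving the bulk $L(s+it, u_j)L(s-it, u_j)$ and the $N^{-s-it}$, $d^{\frac12-it}$ prefactors; (iii) the three local computations at $p\mid N$, case by case, verifying they equal the three products defining $\scrP_d$.
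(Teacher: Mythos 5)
Your proposal follows exactly the route the paper indicates: the paper's proof is the one-sentence remark that one computes the Euler product factorization of $\scrL(s, it; u_j)$ by applying the decomposition \eqref{e:uj_L_Fourier} and the definition \eqref{e:sigma_1N} of $\sigma_{-2it}(m; N)$, with all details omitted. Your three-step organization (reduction to newform data via $m=dm'$, the $p\nmid N$ Euler factors assembling into $L(s+it,u_j)L(s-it,u_j)$, and the case-by-case local computation at $p\mid N$ tracking the support condition $\frac{N}{\prod_{p\mid N}p}\mid m$) is a correct elaboration of precisely that omitted computation.
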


\begin{proof}
The proof consists of computing the Euler product factorization of $\scrL(s, it; u_j)$ by applying the decomposition \eqref{e:uj_L_Fourier} 
and recalling the definition of $\sigma_{-2it}(m; N)$ given in \eqref{e:sigma_1N}. 
We omit the details. 

\end{proof}
 
%
%

To study $\scrL_{\cuspa}(s, it; z)$ we first give 
explicit descriptions of the Eisenstein series $E_{\cuspa}(z, s)$ defined in \S\ref{ss:basis_L2}, following \cite{Y19}. 
A complete set of inequivalent cusps for $\Gamma_0(N)$ is given by $\frac{1}{ca}$ where $a\mid N$ 
and $c\bmod{\gcd(a, N/a)}$, $\gcd(c, \gcd(a, N/a))=1$ and we choose the representative $c$ satisfying $\gcd(c, N)=1$. 
(One can always choose such a representative.)
By \cite[Theorem~6.1]{Y19}, taking the central character as the trivial character mod $N$, for $n\neq 0$, 
we get 
\begin{multline}\label{e:tau_1/ca_n}
\tau_{\frac{1}{ca}}(s; n)
= \left(\frac{N}{\gcd(a, N/a)}\right)^{-s} \frac{1}{\varphi(\gcd(a, N/a))}
\sum_{q\mid \gcd(a, N/a)} \sum_{\substack{\chi\bmod{q}\\ \text{ primitive }}} \overline{\chi(-c)}
\frac{q^{-s} \tau(\chi)}{\pi^{-s} \Gamma\left(s\right) L^{(N)}(2s, \chi^2)} 
\\ \times \sum_{\substack{\ell\mid a, b\mid\frac{N}{a} \\ \gcd(b\ell, q)=1\\ b\frac{a}{q\ell}\mid n}} 
\frac{\mu(\ell)\mu(b) \chi(\ell b)}{(\ell b)^s} 
2\left(b\frac{a}{q\ell}\right)^{\frac{1}{2}} 
\lambda_{\chi}\left(\frac{n}{b\frac{a}{q\ell}}, s\right) 
\end{multline}
where 
\begin{equation}\label{e:lambda_chi}
\lambda_\chi(n, s) 
= 
\overline{\chi(n)} |n|^{s-\frac{1}{2}} \sum_{d\mid |n|} \chi(d)^2 d^{-2s+1}. 
\end{equation}
Note that $\lambda_\chi(n, s)=0$ unless $\gcd(n, q)=1$. 
Moreover we also observe that 
\begin{equation}\label{e:tau_1/ca_cplxcong}
\overline{\tau_{\frac{1}{ca}}(1/2+ir; n)}
= \tau_{\frac{1}{ca}}(1/2-ir; -n). 
\end{equation}

By \eqref{e:def_cL_cuspa}, recall that 
\begin{equation}
\scrL_{\cuspa} (s, it; z)
= \zeta^{(N)}(2s) \sum_{m=1}^\infty\frac{\sigma_{-2it}(m; N) m^{it}\overline{\tau_{\cuspa}(1/2+z; m)}}{m^{s}}.
\end{equation}

\begin{lemma}\label{lem:scrL_fac_Eis}
With the above parameterization of cusps for $\Gamma_0(N)$, for $a\mid N$ 
and $c\mod\gcd(a, N/a)$ with $\gcd(c,\gcd(a, N/a))=1$ (and chosen as $\gcd(c, N)=1$), 
at $\cuspa=\frac{1}{ca}$, 
we have 
\begin{equation}\label{e:scrL_fac_Eis}
\scrL_{\frac{1}{ca}}(s, it; ir)
= \scrP_{\frac{1}{ca}}(s, it; ir)
\frac{\zeta(s+it+ir)\zeta(s-it+ir)\zeta(s+it-ir)\zeta(s-it-ir)}{\pi^{-\frac{1}{2}+ir} \Gamma\left(\frac{1}{2}-ir\right) \zeta^{(N)}(1-2ir)} 
\end{equation}
where 
\begin{equation}\label{e:scrP_Eis}
\scrP_{\frac{1}{ca}}(s, it; ir) = 2\frac{N^{-2it}}{\prod_{p\mid N} p} 
\left(\frac{N}{\gcd(N/a, a)}\right)^{-\frac{1}{2}+ir} \frac{a^{-s+\frac{1}{2}+it} }{\varphi(\gcd(a, N/a))}
\prod_{p\mid N} \scrP_{p^{\ord_p(N)}}(s, it; ir; a). 
\end{equation}
Let 
\begin{equation}
\zeta_p(s, it, ir) = (1-p^{-s-it-ir}) (1-p^{-s+it-ir}) (1-p^{-s-it+ir}) (1-p^{-s+it+ir}). 
\end{equation}
When $p\mid \gcd(a, N/a)$, 
\begin{multline}\label{e:scrP_cusp_gcd}
\scrP_{p^{\ord_p(N)}}(s, it; ir; a)
= \zeta_p(s, it, ir) 
p^{-(\ord_p(N/a)-1)(s-it+ir)} 
\\ \times \bigg\{ -1
+ \big(1-p^{-1+2ir} + \frac{p^{-2it}(1-p^{1+2it})(1-p^{-s-it+ir})+(p-1)(1-p^{-s+it+ir})}{1-p^{-2it}}\big) 
p^{-2ir} (\sigma_{2ir}(p^{\ord_p(N/a)})-1)
\bigg\}
\\+ p^{-\ord_p(N/a)(s-it+ir)} \frac{(1-p^{-s-it+ir})(1-p^{-s+it+ir})}{1-p^{-2it}}
\\ \times \bigg\{p^{-4it} (1-p^{1+2it}) 
(1-p^{-1+s+it+ir})(1-p^{-s+it-ir})
(\sigma_{2ir}(p^{\ord_p(N/a)})(1- p^{-s-it-ir}) + p^{-s-it-ir})\big)
\\ + (p-1) (1-p^{-1+s-it+ir})(1-p^{-s-it-ir})
(\sigma_{2ir}(p^{\ord_p(N/a)})(1- p^{-s+it-ir}) + p^{-s+it-ir} )\bigg\}. 
\end{multline} 
When $p\mid a$ and $p\nmid\frac{N}{a}$, 
\begin{multline}\label{e:scrP_cusp_a}
\scrP_{p^{\ord_p(N)}}(s, it; ir; a)
= (1-p^{-2it})^{-1} 
\bigg\{p^{-4it} (1-p^{1+2it})(1-p^{-1+s+it+ir}) (1-p^{-s+it-ir})(1-p^{-s+it+ir})
\\ + (p-1) (1-p^{-1+s-it+ir}) (1-p^{-s-it-ir}) (1-p^{-s-it+ir})\bigg\}. 
\end{multline} 
When $p\mid \frac{N}{a}$ and $p\nmid a$, 
\begin{multline}\label{e:scrP_cusp_Na}
\scrP_{p^{\ord_p(N)}}(s, it; ir; a)
\\ = p^{-(\ord_p(N/a)-1)(s-it+ir)} (1-p^{-s-it+ir})(1-p^{-s+it+ir}) 
\bigg\{-(1-p^{-s+it-ir})(1-p^{-s-it-ir})
\\ + \frac{p^{-2it} (1-p^{1+2it})(1-p^{-s+it-ir}) p^{-s-it-ir}
+ (p-1) (1-p^{-s-it-ir})p^{-s+it-ir}}{(1-p^{-2it})} 
\bigg\}. 
\end{multline} 
\end{lemma}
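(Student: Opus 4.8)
The final statement to prove is Lemma~\ref{lem:scrL_fac_Eis}, giving the factorization of $\scrL_{\frac{1}{ca}}(s, it; ir)$ into the standard product of four Riemann zeta values (the expected ``$\GL_1\times\GL_1$'' shape coming from the fact that an Eisenstein series is an isobaric sum of two Hecke characters) times an explicit Euler polynomial $\scrP_{\frac{1}{ca}}(s, it; ir)$ that is $O_N(1)$ on $\Re(s)=1/2$, with the stated local factors at primes dividing $N$ according to whether $p\mid\gcd(a,N/a)$, $p\mid a$ but $p\nmid N/a$, or $p\mid N/a$ but $p\nmid a$.

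\medskip
\noindent\textbf{Plan of proof.} The plan is to start from Young's explicit formula \eqref{e:tau_1/ca_n} for $\tau_{\frac{1}{ca}}(s; n)$, substitute $s = 1/2+z$ with $z = ir$, take complex conjugates using \eqref{e:tau_1/ca_cplxcong}, and insert the result into the defining Dirichlet series \eqref{e:def_cL_cuspa} together with the definition \eqref{e:sigma_1N} of $\sigma_{-2it}(m;N)$. Because $\sigma_{-2it}(m;N)$ forces $\frac{N}{\prod_{p\mid N}p}\mid m$ and because $\lambda_\chi(n,s)$ vanishes unless $\gcd(n,q)=1$, the sum over $m$ factors over primes. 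First I would treat the ``generic'' primes $p\nmid N$: there the local factor of $\sigma_{-2it}(m;N)m^{it}$ is $\sum_{p^j\| m}\sigma^{(N)}_{-2it}(p^j)p^{jit}$ and the local factor of $\overline{\tau_{\frac{1}{ca}}(1/2-ir; m)}$ reduces (via $\lambda_\chi$ with trivial $\chi$ at such $p$, or via the Ramanujan-type identity $\sum_j \sigma_{2ir}(p^j)p^{-js}= \frac{1}{(1-p^{-s})(1-p^{-s+2ir})}$ type manipulation) to produce exactly the Euler factors of $\zeta(s+it+ir)\zeta(s-it+ir)\zeta(s+it-ir)\zeta(s-it-ir)\,\zeta^{(N)}(1-2ir)^{-1}\,\pi^{-(-\frac12+ir)}\Gamma(\tfrac12-ir)^{-1}$. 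This establishes the displayed global factorization \eqref{e:scrL_fac_Eis} with $\scrP_{\frac{1}{ca}}$ collecting precisely the bad Euler factors at $p\mid N$.

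\medskip
\noindent\textbf{The bad primes.} The substance of the lemma is the computation of the local factor $\scrP_{p^{\ord_p(N)}}(s,it;ir;a)$ at each $p\mid N$, and here I would argue case by case exactly as dictated by the trichotomy $p\mid\gcd(a,N/a)$ versus $p\mid a,\ p\nmid N/a$ versus $p\mid N/a,\ p\nmid a$. For a fixed $p\mid N$ one isolates, inside \eqref{e:tau_1/ca_n}, the terms indexed by divisors $q\mid\gcd(a,N/a)$, $\ell\mid a$, $b\mid N/a$ with $p$-parts contributing; since $q$ ranges only over divisors of $\gcd(a,N/a)$ the prime $p$ can appear in $q$ only in the first case, which is why that case is by far the most complicated (it is the one carrying the inner sum over primitive characters $\chi\bmod q$ and the Gauss sum $\tau(\chi)$). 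In all cases one uses $\lambda_\chi(n,s) = \overline{\chi(n)}|n|^{s-\frac12}\sigma_{1-2s}(n)$-type evaluation from \eqref{e:lambda_chi}, together with the constraint $b\frac{a}{q\ell}\mid n$ to pin down the allowed $p$-adic valuation of $m=n$, sums the resulting finite geometric/divisor series in the $p$-power of $m$ against $\sum_j\sigma^{(N)}_{-2it}(p^j)p^{jit}P_N(\tfrac12+it,p^j)$ coming from \eqref{e:sigma_1N} and \eqref{e:P_M}, and collects terms. The bookkeeping of the $P_N$ factor and the $N^{-2it}/\prod_{p\mid N}p$ prefactor in \eqref{e:sigma_1N} is what produces the overall constant $2\,N^{-2it}/\prod_{p\mid N}p$ and the power $(N/\gcd(N/a,a))^{-1/2+ir}\,a^{-s+1/2+it}/\varphi(\gcd(a,N/a))$ in \eqref{e:scrP_Eis}. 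Finally, that $\scrP_{\frac{1}{ca}}(s,it;ir)=O_N(1)$ on $\Re(s)=1/2$ is read off directly from the explicit formulas \eqref{e:scrP_cusp_gcd}, \eqref{e:scrP_cusp_a}, \eqref{e:scrP_cusp_Na}, since each is a finite product/sum of terms $p^{(\text{integer})(\cdots)}$ with exponents of absolute value $1$ on that line.

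\medskip
\noindent\textbf{Main obstacle.} I expect the only real difficulty to be the first case, $p\mid\gcd(a,N/a)$: there the formula \eqref{e:scrP_cusp_gcd} is long because one must simultaneously sum over the primitive character $\chi\bmod p^{j}$ (the Gauss sum $\tau(\chi)$ and the twisted $L$-value $L^{(N)}(2s,\chi^2)$ in the denominator interact nontrivially when $p=2$ or when $\chi^2$ is trivial), keep track of the divisor sum $\sigma_{2ir}(p^{\ord_p(N/a)})$ coming from $\lambda_\chi$, and reconcile the $(1-p^{-2it})^{-1}$ denominators produced by the $\sigma_{-2it}(p^j;N)$ sum. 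Everything else — the generic primes, and the two ``split'' cases where $p$ divides exactly one of $a$, $N/a$ — is a routine, if lengthy, Euler-product computation, and the excerpt's proof (which says ``We omit the details'') is presumably content to indicate this and refer back to the analogous computation in \cite{HHR} and to Young's paper \cite{Y19}.
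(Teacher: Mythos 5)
Your proposal follows essentially the same route as the paper, whose proof consists precisely of computing the Euler factor at each prime $p$ by inserting Young's formula \eqref{e:tau_1/ca_n} into \eqref{e:def_cL_cuspa} and which omits all further details. One remark: the divisibility constraint $\frac{N}{\prod_{p\mid N}p}\mid m$ coming from \eqref{e:sigma_1N}, combined with the coprimality condition built into $\lambda_\chi$ and the requirement $\gcd(b\ell,q)=1$, forces $q=1$, so the sum over primitive characters collapses to the trivial one; this is why only Riemann zeta factors survive in \eqref{e:scrL_fac_Eis}, and the ``main obstacle'' you anticipate with Gauss sums and nontrivial $\chi$ does not actually arise.
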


\begin{proof}
The proof consists of computing each Euler product factor for a prime $p$, 
and applying \eqref{e:tau_1/ca_n}. 
We omit the detailed computations. 
\end{proof}

Taking $ir=1-s+it$ and $ir=1-s-it$ in \eqref{e:scrP_cusp_gcd}, gives us the following corollary.
\begin{corollary}\label{cor:scrP_Eis_pm}
When $a\mid N$ and $a< N$, we have $\scrP_{\frac{1}{ca}}(s, it; 1-s+it)=0$ for any $c\bmod{\gcd(a, N/a)}$, $\gcd(c, \gcd(a, N/a))=1$. 
When $a=N$, taking $ir=1-s+it$, 
\begin{equation}
\frac{\scrP_{\frac{1}{N}}(s, it; 1-s+it)}{\prod_{p\mid N} (1-p^{1-2s+2it})} 
= 2 N^{1-2s}\prod_{p\mid N} (1-p^{-1-2it})(1-p^{-1}). 
\end{equation}
Taking $ir = 1-s-it$, we get
\begin{multline}
\frac{\scrP_{\frac{1}{ca}}(s, it; 1-s-it)}{\prod_{p\mid N} (1-p^{1-2s-2it})}
= 2N^{-\frac{1}{2}-s-it}
\left(\frac{a}{\gcd(a, N/a)}\right)^{-s+\frac{3}{2}-it}
\prod_{p\mid \frac{N}{a}} (1-p^{1-2s})(1-p^{-2it})
\prod_{\substack{p\mid a\\ p\nmid\frac{N}{a}}} (1-p^{-1})^2.
\end{multline}
\end{corollary}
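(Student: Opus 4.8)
The plan is to derive both identities as specializations of the general factorization of $\scrP_{\frac{1}{ca}}(s,it;ir)$ recorded in Lemma~\ref{lem:scrL_fac_Eis}, evaluated at the two critical values $ir = 1-s\pm it$. Recall from \eqref{e:scrP_Eis} that $\scrP_{\frac{1}{ca}}(s,it;ir)$ is, up to the explicit prefactor $2\frac{N^{-2it}}{\prod_{p\mid N}p}\left(\frac{N}{\gcd(N/a,a)}\right)^{-\frac12+ir}\frac{a^{-s+\frac12+it}}{\varphi(\gcd(a,N/a))}$, a product over $p\mid N$ of the local factors $\scrP_{p^{\ord_p(N)}}(s,it;ir;a)$ given by the three cases \eqref{e:scrP_cusp_gcd}, \eqref{e:scrP_cusp_a}, \eqref{e:scrP_cusp_Na}. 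So the computation reduces to substituting $ir=1-s\pm it$ into each of the three local expressions, simplifying, and then reassembling the global product and dividing by $\prod_{p\mid N}(1-p^{1-2s\pm 2it})$.

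First I would treat the case $ir = 1-s+it$. Observe that at this value $1-p^{-s+it-ir} = 1-p^{-s+it-(1-s+it)} = 1-p^{-1}$ and, more importantly, $1-p^{-1+s+it+ir} = 1-p^{-1+s+it+1-s+it} = 1 - p^{2it}$, while $1-p^{-s-it+ir} = 1-p^{-2it}$ and $1-p^{-s+it+ir} = 1$; also $\zeta_p(s,it,ir)$ acquires the factor $1-p^{-s+it+ir} = 1$. The key mechanism is that the factor $1-p^{-s+it+ir}$ vanishes identically unless $p\mid a$ fails, i.e. in the cases $p\mid\frac{N}{a}$ (cases \eqref{e:scrP_cusp_gcd} and \eqref{e:scrP_cusp_Na}) a factor $(1-p^{-s+it+ir})=(1-p^0)=0$ appears; hence whenever $a<N$ there is at least one prime $p\mid \frac{N}{a}$ forcing a zero, which gives $\scrP_{\frac{1}{ca}}(s,it;1-s+it)=0$. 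When $a=N$ only the case $p\mid a$, $p\nmid\frac{N}{a}$ — expression \eqref{e:scrP_cusp_a} — contributes; substituting $ir = 1-s+it$ there, one finds $1-p^{-1+s+it+ir}=1-p^{2it}$, $1-p^{-s+it-ir}=1-p^{-1}$, $1-p^{-s+it+ir}=1-p^{0}$... here I should be careful: with $a=N$ we have $\gcd(a,N/a)=1$ so $p\nmid\gcd(a,N/a)$, and since $p\mid N$, $p\mid a$, $p\nmid\frac{N}{a}$ is the relevant case. Plugging in and using $(1-p^{-2it})^{-1}$ against $p^{-4it}(1-p^{1+2it}) + (p-1)$-type combinations collapses (after factoring out $1-p^{-2it}$ via the identity $p^{-4it}(1-p^{1+2it})=p^{-4it}-p^{1-2it}$ and grouping with $p-1$) to a clean product; dividing by $1-p^{1-2s+2it}$ and multiplying in the prefactor with $\left(\frac{N}{\gcd(N/a,a)}\right)^{-\frac12+ir} = N^{-\frac12+1-s+it} = N^{\frac12-s+it}$ and $a^{-s+\frac12+it}=N^{-s+\frac12+it}$ yields $N^{1-2s}$ together with the local factors, producing $2N^{1-2s}\prod_{p\mid N}(1-p^{-1-2it})(1-p^{-1})$.

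Next, for $ir = 1-s-it$, the relevant vanishing factor becomes $1-p^{-s-it+ir} = 1-p^{-s-it+1-s-it}$... no: $1-p^{-s-it+ir}=1-p^{-s-it+(1-s-it)} = 1-p^{1-2s-2it}$, which does not vanish generically, so unlike the previous case no cusp is annihilated. Here I would substitute $ir = 1-s-it$ into all three local expressions. The simplifications to track: $1-p^{-s+it-ir}=1-p^{-s+it-1+s+it}=1-p^{-1+2it}$, $1-p^{-s-it-ir}=1-p^{-1}$, $1-p^{-1+s-it+ir}=1-p^{0}$ giving a vanishing factor in cases \eqref{e:scrP_cusp_gcd} and \eqref{e:scrP_cusp_Na} only where that precise factor occurs — careful bookkeeping shows the surviving contributions reorganize, after dividing by $1-p^{1-2s-2it}$, into $(1-p^{1-2s})(1-p^{-2it})$ for $p\mid\frac{N}{a}$ and into $(1-p^{-1})^2$ for $p\mid a$, $p\nmid\frac{N}{a}$. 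Collecting the prefactor $\left(\frac{N}{\gcd(N/a,a)}\right)^{-\frac12+ir}=\left(\frac{N}{\gcd(a,N/a)}\right)^{\frac12-s-it}$ together with $a^{-s+\frac12+it}$ and extracting $N^{-\frac12-s-it}$ leaves the stated $\left(\frac{a}{\gcd(a,N/a)}\right)^{-s+\frac32-it}$.

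The main obstacle will be the local algebra in case \eqref{e:scrP_cusp_gcd}: that expression is a genuinely long three-line combination of products of factors $(1-p^{\ast})$ together with divisor-sum terms $\sigma_{2ir}(p^{\ord_p(N/a)})$ and powers $p^{-(\ord_p(N/a)-1)(s-it+ir)}$, and at $ir=1-s\pm it$ several of these factors collide or vanish. Verifying that after specialization all the $\ord_p(N/a)$-dependence and all the $\sigma_{2ir}$-terms cancel — so that the final answer depends on $p$ only through simple factors like $(1-p^{-1})$, $(1-p^{-2it})$, $(1-p^{1-2s})$ — is the delicate point. I expect this to follow from the geometric-series identity $\sigma_{2ir}(p^m) = \frac{1-p^{2ir(m+1)}}{1-p^{2ir}}$ combined with the vanishing of the appropriate $(1-p^{-s+it+ir})$ or $(1-p^{-1+s-it+ir})$ factor at the specialized value, but it requires patient case-splitting on whether $p\mid\gcd(a,N/a)$, $p\mid a$ only, or $p\mid\frac{N}{a}$ only; we omit these routine but lengthy manipulations.
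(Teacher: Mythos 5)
Your overall strategy --- specialize the local factors of Lemma~\ref{lem:scrL_fac_Eis} at $ir=1-s\pm it$, track the vanishing factors, and reassemble the global product --- is exactly what the paper intends (its entire ``proof'' is the one sentence preceding the corollary), so the route is the right one. The problem is that the step your first assertion rests on is miscomputed. You claim that for $p\mid\frac{N}{a}$ the factor $1-p^{-s+it+ir}$ becomes $1-p^{0}=0$ at $ir=1-s+it$; in fact $-s+it+ir=-s+it+(1-s+it)=1-2s+2it$, so this factor equals $1-p^{1-2s+2it}$, which is precisely the quantity you later divide out and is not generically zero. (Several other substitutions are also off: at $ir=1-s+it$ one has $1-p^{-s-it+ir}=1-p^{1-2s}$, not $1-p^{-2it}$; and at $ir=1-s-it$ one has $1-p^{-1+s-it+ir}=1-p^{-2it}$, not $1-p^{0}$.) The linear form that actually vanishes at $ir=1-s+it$ is $-1+s-it+ir=0$, i.e.\ the factor $1-p^{-1+s-it+ir}$. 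This kills the second summand of \eqref{e:scrP_cusp_a} and is what makes the $a=N$ evaluation collapse to $2N^{1-2s}\prod_{p\mid N}(1-p^{-1-2it})(1-p^{-1})$; but this factor does not occur at all in \eqref{e:scrP_cusp_Na}, so the vanishing of $\scrP_{\frac{1}{ca}}(s,it;1-s+it)$ for $a<N$ cannot be read off from any single factor.

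What actually happens for a prime $p\mid\frac{N}{a}$, $p\nmid a$ is a cancellation between the two terms inside the braces of \eqref{e:scrP_cusp_Na}: at $ir=1-s+it$ the numerator of the second term simplifies to $(1-p^{-1})(1-p^{-2it})(1-p^{-1-2it})$, so after dividing by $1-p^{-2it}$ it exactly cancels $-(1-p^{-1})(1-p^{-1-2it})$, and the whole local factor vanishes. An analogous but longer cancellation must be verified in \eqref{e:scrP_cusp_gcd} for $p\mid\gcd(a,N/a)$. This is exactly the ``delicate point'' you deferred as routine, but once the incorrect single-factor argument is removed it is the entire content of the first claim and cannot be omitted. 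The evaluation at $ir=1-s-it$ has the mirror-image structure (the vanishing form there is $-1+s+it+ir=0$, killing the first summand of \eqref{e:scrP_cusp_a}), and again the surviving factors in \eqref{e:scrP_cusp_gcd} and \eqref{e:scrP_cusp_Na} must be tracked term by term to produce $(1-p^{1-2s})(1-p^{-2it})$ and $(1-p^{-1})^2$ respectively.
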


\subsection{Spectral description of $\OD^+(s, t)$}

Recalling \eqref{e:OD+_Z_pre}, for $\Re(s)>2+\frac{k}{2}+\epsilon$, we have
\begin{multline}
\OD^+(s, t) = i^k (2\pi)^{2it}
\frac{4}{2\pi i}\int_{(\sigma_u)} \frac{h(u/i)u}{\cos(\pi u)}
\frac{1}{\Gamma\left(-u+it+\frac{k}{2}\right)\Gamma\left(u+it+\frac{k}{2}\right)}
\\ \times \frac{1}{2\pi i} \int_{(1+\frac{k}{2}+\epsilon)}
\frac{\Gamma\left(v-it\right) \Gamma\left(v+it\right) \Gamma\left(-v+u+\frac{k}{2}\right)}
{\Gamma\left(v+u+1-\frac{k}{2}\right)}
Z(s, v; it) \; dv \; du.
\end{multline}
Here $1+\epsilon < \sigma_u < \frac{3}{2}$. 
We now move the $v$ line of integration from $\Re(v)=1+\frac{k}{2}+\epsilon$ to $\Re(v)=\frac{1}{4}$. 
Since $\Re(s-v)>1/2$, by Proposition~\ref{prop:Z}, the function $Z(s, v; it)$ is analytic over this region. 
Then we take $\frac{1}{2} < \Re(s) < 1$ and get 
\begin{multline}\label{e:OD+_nonpolar}
\OD^+(s, t) - \OD^+_{\Omega}(s, t) 
= i^k (2\pi)^{2it}
\frac{4}{2\pi i}\int_{(\sigma_u)} \frac{h(u/i)u}{\cos(\pi u)}
\frac{1}{\Gamma\left(-u+it+\frac{k}{2}\right)\Gamma\left(u+it+\frac{k}{2}\right)}
\\ \times \frac{1}{2\pi i} \int_{(1/4)}
\frac{\Gamma\left(v-it\right) \Gamma\left(v+it\right) \Gamma\left(-v+u+\frac{k}{2}\right)}
{\Gamma\left(v+u+1-\frac{k}{2}\right)}
\big(Z(s, v; it)-Z_{\Omega}(s, v; it)\big) \; dv \; du, 
\end{multline}
where 
\begin{multline}
\OD^+_{\Omega}(s, t) 
= i^k (2\pi)^{2it}
\frac{4}{2\pi i}\int_{(\sigma_u)} \frac{h(u/i)u}{\cos(\pi u)}
\frac{1}{\Gamma\left(-u+it+\frac{k}{2}\right)\Gamma\left(u+it+\frac{k}{2}\right)}
\\ \times \frac{1}{2\pi i} \int_{(1/4)}
\frac{\Gamma\left(v-it\right) \Gamma\left(v+it\right) \Gamma\left(-v+u+\frac{k}{2}\right)}
{\Gamma\left(v+u+1-\frac{k}{2}\right)}
Z_{\Omega}(s, v; it) \; dv \; du. 
\end{multline}
Here $Z_{\Omega}(s, v; it)$ is given in \eqref{e:ZOmega}. 
With the given $s$, we now move back the $v$ line of integration to $\Re(s-v) < -\frac{k}{2}$ 
so we can write $Z(s, v; it)-Z_{\Omega}(s, v; it)$ as the absolutely convergent spectral expansion in \eqref{e:Z_spec}. 

Assume that $\Re(s)=1/2$ and move the $v$ line of integration in \eqref{e:OD+_nonpolar} to $\Re(v)=\frac 54+\frac{k}{2}$. 
We pass over the poles described in Proposition~\ref{prop:Z} 
and obtain the following proposition. 
 We use the notations in Proposition~\ref{prop:Z} as $Z_{*}(s, v; it)$ 
and let 
\begin{multline}\label{e:OD+cuspint}
\OD^+_{\cusp, \intg}(s, t) = i^k (2\pi)^{2it}
\frac{4}{2\pi i}\int_{(\sigma_u)} \frac{h(u/i)u}{\cos(\pi u)}
\frac{1}{\Gamma\left(-u+it+\frac{k}{2}\right)\Gamma\left(u+it+\frac{k}{2}\right)}
\\ \times \frac{1}{2\pi i} \int_{\left(\frac54+\frac k2 \right)}
\frac{\Gamma\left(v-it\right) \Gamma\left(v+it\right) \Gamma\left(-v+u+\frac{k}{2}\right)}
{\Gamma\left(v+u+1-\frac{k}{2}\right)}
Z_{\cusp}(s, v; it) \; dv \; du, 
\end{multline}
\begin{multline}\label{e:OD+cuspres}
\OD^+_{\cusp, \res}(s, t) = i^k (2\pi)^{2it}
\sum_{\ell=0}^{\frac{k}{2}} 
\frac{4}{2\pi i}\int_{(\sigma_u)} \frac{h(u/i)u}{\cos(\pi u)}
\frac{1}{\Gamma\left(-u+it+\frac{k}{2}\right)\Gamma\left(u+it+\frac{k}{2}\right)}
\\ \times \sum_j  (-1)^{\epsilon_j} \frac{(4\pi)^k 2^{-\ell\pm ir_j}\big(\Res_{s-v=-\ell\pm ir_j}M(s-v+1/2, r_j)\big) }
{2\sqrt{\pi}\Gamma\left(\pm ir_j +k-\ell-\frac{1}{2}\right)}
\\ \times \frac{\Gamma\left(s+\ell\mp ir_j-it\right) \Gamma\left(s+\ell\mp ir_j+it\right) \Gamma\left(-s-\ell\pm ir_j+u+\frac{k}{2}\right)}
{\Gamma\left(s+\ell\mp ir_j +u+1-\frac{k}{2}\right)} 
\scrL(s, it; \overline{u_j}) \left<u_j, V_{f, g}\right>
\; du, 
\end{multline}
\begin{multline}\label{e:OD+contint}
\OD^+_{\cont, \intg}(s, t) = i^k (2\pi)^{2it}
\frac{4}{2\pi i}\int_{(\sigma_u)} \frac{h(u/i)u}{\cos(\pi u)}
\frac{1}{\Gamma\left(-u+it+\frac{k}{2}\right)\Gamma\left(u+it+\frac{k}{2}\right)}
\\ \times \frac{1}{2\pi i} \int_{\left(\frac54+\frac k2 \right)}
\frac{\Gamma\left(v-it\right) \Gamma\left(v+it\right) \Gamma\left(-v+u+\frac{k}{2}\right)}
{\Gamma\left(v+u+1-\frac{k}{2}\right)}
Z_{\cont}(s, v; it) \; dv \; du, 
\end{multline}
\begin{multline}\label{e:OD+contres}
\OD^+_{\cont, \res}(s, t) = i^k (2\pi)^{2it}
\frac{4}{2\pi i}\int_{(\sigma_u)} \frac{h(u/i)u}{\cos(\pi u)}
\frac{1}{\Gamma\left(-u+it+\frac{k}{2}\right)\Gamma\left(u+it+\frac{k}{2}\right)}
\\ \times \frac{1}{2\pi i} \int_{\left(\frac54+\frac k2 \right)}
\frac{\Gamma\left(v-it\right) \Gamma\left(v+it\right) \Gamma\left(-v+u+\frac{k}{2}\right)}
{\Gamma\left(v+u+1-\frac{k}{2}\right)}
Z_{\cont, \res}(s, v; it) \; dv \; du
\end{multline}
and 
\begin{multline}\label{e:OD+Omega}
\OD^+_{\Omega}(s, t) = i^k (2\pi)^{2it}
\frac{4}{2\pi i}\int_{(\sigma_u)} \frac{h(u/i)u}{\cos(\pi u)}
\frac{1}{\Gamma\left(-u+it+\frac{k}{2}\right)\Gamma\left(u+it+\frac{k}{2}\right)}
\\ \times \frac{1}{2\pi i} \int_{(1/4)}
\frac{\Gamma\left(v-it\right) \Gamma\left(v+it\right) \Gamma\left(-v+u+\frac{k}{2}\right)}
{\Gamma\left(v+u+1-\frac{k}{2}\right)}
Z_{\Omega}(s, v; it) \; dv \; du.
\end{multline}
Now, in the following proposition, we decompose $\OD^+(s, t)$.

\begin{proposition}\label{OD+def}
On $\Re(s)=1/2$, we have 
\begin{equation}
\OD^+(s, t) = \OD^+_{\cusp}(s, t) + \OD^+_{\cont}(s, t)
\end{equation}
where 
\begin{equation}
\OD^+_{\cusp}(s, t) = \OD^+_{\cusp, \intg}(s, t) + \OD^+_{\cusp, \res}(s, t)
\end{equation}
and 
\begin{equation}
\OD^+_{\cont}(s, t) = \OD^+_{\cont, \intg} (s, t) + \OD^+_{\cont, \res}(s, t) + \OD^+_{\Omega}(s, t). 
\end{equation}
\end{proposition}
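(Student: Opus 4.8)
The plan is to obtain the claimed decomposition of $\OD^+(s,t)$ by analytically continuing the double contour integral representation \eqref{e:OD+_Z_pre} from the region of absolute convergence $\Re(s) > 2 + \frac k2 + \epsilon$ down to the critical line $\Re(s) = \frac12$, carefully tracking which residues are crossed. First I would fix $1 + \epsilon < \sigma_u < \frac32$ and start from \eqref{e:OD+_Z_pre} with the $v$-line at $\Re(v) = 1 + \frac k2 + \epsilon$. Moving the $v$-contour to $\Re(v) = \frac14$ crosses no poles of $Z(s,v;it)$ (since $\Re(s-v) > \frac12$ there and Proposition~\ref{prop:Z} locates the $s-v = -\ell \pm ir_j$ poles to the left), though I must separately split off the $Z_\Omega$ contribution as in \eqref{e:OD+_nonpolar}, because for $\frac12 < \Re(s) < 1$ the term $\delta_{\Re(s)<1} Z_\Omega$ is switched on and the function $Z - Z_\Omega$ is the one that admits the absolutely convergent spectral expansion \eqref{e:Z_spec} once the $v$-line is pushed back to $\Re(s-v) < -\frac k2$.

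Next, with $\Re(s) = \frac12$, I would substitute the spectral expansion \eqref{e:Z_spec} for $Z - Z_\Omega$ into \eqref{e:OD+_nonpolar} and then move the $v$-line of integration out to $\Re(v) = \frac54 + \frac k2$. In this passage the poles of $Z_{\cusp}$ at $s - v = -\ell \pm ir_j$ for $0 \le \ell \le \frac k2$ (coming from the poles of $M(s-v+1/2, r_j)$ described before Proposition~\ref{prop:Z}), together with the corresponding poles of $Z_{\cont,\res}$, are crossed; collecting these residues produces the discrete piece $\OD^+_{\cusp,\res}(s,t)$ in \eqref{e:OD+cuspres} and reorganizes the remaining contour integrals into $\OD^+_{\cusp,\intg}$, $\OD^+_{\cont,\intg}$ and $\OD^+_{\cont,\res}$ as in \eqref{e:OD+cuspint}, \eqref{e:OD+contint}, \eqref{e:OD+contres}. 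The term $\OD^+_\Omega(s,t)$ of \eqref{e:OD+Omega} is exactly the piece that was split off at the start and is kept with its $v$-line at $\Re(v)=\frac14$. Assembling these gives
\begin{equation*}
\OD^+(s,t) = \OD^+_{\cusp,\intg}(s,t) + \OD^+_{\cusp,\res}(s,t) + \OD^+_{\cont,\intg}(s,t) + \OD^+_{\cont,\res}(s,t) + \OD^+_\Omega(s,t),
\end{equation*}
which, after grouping the cuspidal and continuous/residual parts, is precisely the asserted decomposition $\OD^+ = \OD^+_{\cusp} + \OD^+_{\cont}$ with $\OD^+_{\cusp} = \OD^+_{\cusp,\intg} + \OD^+_{\cusp,\res}$ and $\OD^+_{\cont} = \OD^+_{\cont,\intg} + \OD^+_{\cont,\res} + \OD^+_\Omega$.

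The main obstacle, I expect, will be justifying that the contour shifts are legitimate: one must check that the double integrals converge absolutely throughout the deformations and that the horizontal connecting segments contribute nothing in the limit. This requires Stirling bounds on the ratios of gamma functions in $v$ (the factor $\Gamma(v-it)\Gamma(v+it)\Gamma(-v+u+\frac k2)/\Gamma(v+u+1-\frac k2)$ decays in $|\Im v|$ for fixed real parts), the rapid decay of $h(u/i)$ in the strip $1+\epsilon < \sigma_u < \frac32$, and growth control on $Z(s,v;it)$ in vertical strips — the latter furnished by the convexity/subconvexity bounds underlying \cite[Proposition~5.1]{HHR} and the triple-product estimate \eqref{e:innerbound} for the spectral pieces. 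As the excerpt notes, the analysis here is virtually identical to that of \cite[\S7.1]{HHR} and \cite[Proposition~7.1]{HHR}, so I would invoke those arguments for the convergence bookkeeping and concentrate the new work on correctly enumerating the poles crossed (in particular the interaction between the $Z_\Omega$ split at $\Re(s) < 1$ and the $M(s-v+1/2,r_j)$ poles indexed by $0 \le \ell \le \frac k2$) and on verifying the formulas \eqref{e:OD+cuspint}--\eqref{e:OD+Omega} term by term.
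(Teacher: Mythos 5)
Your proposal follows essentially the same route as the paper: shift the $v$-line to $\Re(v)=\tfrac14$, split off the $Z_\Omega$ piece as in \eqref{e:OD+_nonpolar}, insert the absolutely convergent spectral expansion \eqref{e:Z_spec} after pushing the $v$-line to $\Re(s-v)<-\tfrac k2$, and then move it out to $\Re(v)=\tfrac54+\tfrac k2$ on $\Re(s)=\tfrac12$, collecting the residues of Proposition~\ref{prop:Z} into $\OD^+_{\cusp,\res}$ and leaving the remaining contour integrals as the other pieces. This matches the paper's argument, including the appeal to \cite[Proposition~7.1]{HHR} for the convergence bookkeeping.
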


In the following two sections we obtain upper bounds for the $\OD^+_*(s, t)$.

\subsection{Upper bounds for $\OD_{\cusp, \intg}^+(s, t)$ and $\OD_{\cont, \intg}^+(s, t)$}\label{ss:OD+intg_upper}
Set $s = \frac{1}{2} -it'$, with $|t'| = |t|$ and $|t| = T^\beta$, with $0\leq \beta<1$.
The objective of this section is to prove: 
\begin{proposition}\label{prop:ub_OD+ccint}
Fix $\alpha$, $\frac{1}{3}< \alpha < \frac{2}{3}$.  
If $t = t'$, and $\alpha > \max\{\frac{1}{3}, \frac{\beta}{2}\}$, 
or if $t = -t'$ and $\alpha >\max\{2\beta-1, \frac{\beta}{2}\}$,
then  
\begin{equation}\label{e:OD+intg_upper}
\OD_{\cusp, \intg}^+(s, t),\; \OD_{\cont, \intg}^+(s, t)\ll T^{1+\epsilon}
\end{equation}
\end{proposition}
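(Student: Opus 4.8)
The plan is to insert the spectral/continuous expansions of $Z(s,v;it)$ (equations \eqref{e:Zcusp} and \eqref{e:Zcontintg}) into the definitions \eqref{e:OD+cuspint} and \eqref{e:OD+contint} and estimate the resulting iterated integral. First I would fix the contours: $\Re(v)=\frac54+\frac k2$ and $1+\epsilon<\sigma_u<\frac32$, so that all ratios of $\Gamma$-functions appearing are in their regions of convergence and one has clean Stirling-type bounds. The integrand of the $u$-integral carries the weight $h(u/i)u/\cos(\pi u)$, which, for $h=h_{T,\alpha}$, localizes $|\Im(u)|\asymp T$ up to an error that decays faster than any power of $T$ once $|T-|\Im u||\gg T^\alpha$; thus the effective $u$-range has length $\asymp T^\alpha$. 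On that range the ratio of gamma factors $\Gamma(-u+it+\tfrac k2)^{-1}\Gamma(u+it+\tfrac k2)^{-1}$ together with the $v$-gamma factors $\Gamma(v-it)\Gamma(v+it)\Gamma(-v+u+\tfrac k2)/\Gamma(v+u+1-\tfrac k2)$ can be bounded by Stirling; after the $\cos(\pi u)$ is cancelled against the exponential growth of the other gamma factors, one is left with a polynomial bound in $T$ and $|t|$ for the gamma part, and convergence in $\Im v$ is guaranteed by the $\Gamma(v-it)\Gamma(v+it)$ decay.

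The arithmetic input is the bound on the spectral sum. For $\OD^+_{\cusp,\intg}$ the relevant object is $\sum_j (-1)^{\epsilon_j} M(s-v+\tfrac12,r_j)\,\scrL(s,it;\overline{u_j})\langle u_j,V_{f,g}\rangle$, and for $\OD^+_{\cont,\intg}$ its Eisenstein analogue. I would bound $\scrL(s,it;\overline{u_j})$ on $\Re(s)=\tfrac12$ using the factorization in Lemma~\ref{lem:scrL_fac_cusp} (so it is a product of four $\zeta$-shifted Hecke $L$-values of $u_j$ times an Euler polynomial that is $O_N(1)$), apply convexity or a Weyl-type bound for $L(\tfrac12\pm it\pm ir_j,u_j)$, and bound $M(s-v+\tfrac12,r_j)$ by Stirling — the $\Gamma$-quotient in $M$ decays exponentially in $r_j$ away from the critical range, which is the key gain. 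The remaining sum over $r_j$ of $e^{\pi|r_j|}|\langle u_j,V_{f,g}\rangle|^2$ is controlled by the Bernstein--Reznikov bound \eqref{e:innerbound}, after a Cauchy--Schwarz splitting the $\langle u_j,V_{f,g}\rangle$ off from the $L$-values. One then has to check that the exponential factor $e^{\pi|r_j|}$ from \eqref{e:innerbound} is exactly matched by the $e^{-\pi|r_j|}$ decay produced by the gamma quotients in $M$ (and by $|\rho_j(1)|^{-2}\asymp e^{-\pi|r_j|}$ via \eqref{weight} when passing between $\scrL$ and $L$), so that the net $r_j$-sum converges and contributes at most a power of $\log T$.

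Assembling: the $u$-integral contributes $T^\alpha$, the gamma factors contribute a controlled power of $T$ and $|t|=T^\beta$, the $v$-integral converges, and the spectral sum contributes $T^{\epsilon}$ (i.e. essentially $(\log T)^{O(1)}$). Tracking the powers of $T$ carefully in the two cases $t'=t$ and $t'=-t$ — where the shifts in $\Gamma(s+\ell\mp ir_j\pm it)$ line up differently — is what produces the stated hypotheses $\alpha>\max\{\tfrac13,\tfrac\beta2\}$ and $\alpha>\max\{2\beta-1,\tfrac\beta2\}$: these are exactly the conditions under which the accumulated power of $T$ stays below $1+\epsilon$. The continuous-spectrum piece is handled identically, replacing the discrete sum by $\sum_\cuspa\frac1{4\pi i}\int_{(0)}$ and using the factorization of $\scrL_\cuspa(s,it;z)$ from Lemma~\ref{lem:scrL_fac_Eis} together with standard subconvex bounds for $\zeta$ on the critical line; the second term of \eqref{e:innerbound} supplies the needed mean-square bound on $\langle E_\cuspa(*,\tfrac12+ir),V_{f,g}\rangle$.

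\textbf{Main obstacle.} The delicate point is the bookkeeping of the competing exponential factors: the Bernstein--Reznikov estimate comes with a factor $e^{\pi|r_j|}$, the normalization \eqref{weight} inserts $e^{-\pi|r_j|}$, and the $\Gamma$-quotients in $M(s-v+\tfrac12,r_j)$ and in the $u$-gamma factors each contribute exponential factors in $|r_j|$ and in $|\Im u|\asymp T$; one must verify these cancel precisely rather than merely up to a subexponential slack, since any surviving exponential would destroy the bound. Handling the contour $\Re(v)=\tfrac54+\tfrac k2$ — chosen to the right of the poles of $M$ — while still getting convergence of the $\Gamma(-v+u+\tfrac k2)/\Gamma(v+u+1-\tfrac k2)$ quotient against the $u$-weight is the technical heart, and is where the hypothesis $\alpha<\tfrac23$ is used.
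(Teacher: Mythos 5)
Your outline reproduces the general skeleton of the paper's argument (insert the spectral expansion of $Z(s,v;it)$, localize $|\Im u|$ near $T$ via $h_{T,\alpha}$, analyze the gamma factors by Stirling, and separate the triple products from the $L$-values by Cauchy--Schwarz against \eqref{e:innerbound}), but it is missing the two devices that actually produce the exponent $1+\epsilon$, and without them the argument does not close. First, you fix the contour at $1+\epsilon<\sigma_u<\frac32$. The paper instead moves the $u$-line to $\Re(u)=CT^{\alpha}$ (this is what Remark~\ref{remark:stirling} and the hypothesis $\alpha<\frac23$ are for --- not the convergence of the $v$-integral, as you suggest). On the region where the exponential parts of the gamma factors do not already decay, the $\sigma_u$-dependent part of the integrand is $\ll e^{-2(r+t)\sigma_u/\gamma}$, so pushing $\sigma_u$ to $CT^{\alpha}$ forces $|r+t|\ll T^{1-\alpha}$ and hence confines the spectral sum to $|r_j|\ll T^{1-\alpha}$ (or $T^{\beta}$, depending on signs). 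Without this shift the variable $r=\Im v$, and with it $|t'+r|$ and the effective range of $r_j$, runs up to size $T$, and no choice of $\sigma_v$ makes the resulting exponent come out at $1+\epsilon$; the case analysis in $t'=\pm t$ and the stated thresholds $\alpha>\max\{\frac13,\frac\beta2\}$, $\alpha>\max\{2\beta-1,\frac\beta2\}$ all come from optimizing over this truncated range.

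Second, after Cauchy--Schwarz you propose to bound $L(\tfrac12\pm 2it,u_j)$ and $L(\tfrac12,u_j)$ individually by convexity or a Weyl-type bound. This loses a decisive power of $T$: the sum $\sum_{|r_j|<T^{C}}e^{-\pi|r_j|}|\scrL(\tfrac12-it',it;\overline{u_j})|^{2}$ runs over $\asymp T^{2C}$ terms, and any pointwise subconvex bound leaves you with at least $T^{2C}\cdot T^{\delta}$ for some $\delta>0$, i.e.\ a loss of $T^{C}$ (or more) over what is needed. The paper's Lemma~\ref{lem:sieve} instead applies the spectral large sieve \eqref{sieve} to the \emph{fourth} moments $\sum_{|r_j|<T^{C}}|L(\tfrac12\pm2it,u_j)|^{4}$ and $\sum_{|r_j|<T^{C}}|L(\tfrac12,u_j)|^{4}$, obtaining Lindel\"of on average; this mean-value saving is exactly what yields $\sum_{|r_j|<T^{C}}(-1)^{\epsilon_j}\scrL(\tfrac12-it',it;\overline{u_j})\left<u_j,V_{f,g}\right>\ll T^{Ck+\frac{\max\{C,\beta\}+C}{2}+\epsilon}$, and with $C=1-\alpha$ (or $C=\beta$) the final exponents reduce to $1+\epsilon$ precisely under the stated hypotheses. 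Your exponential bookkeeping concern is legitimate but is the routine part; the contour shift in $\Re(u)$ and the large-sieve mean-value input are the substantive content of the proof, and both are absent from your proposal.
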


Before proving the above proposition, we clarify notations. 
In general, we will often use two complex variables $u$, $v$, and will write $u = \sigma_u + i \gamma$ and $v = \sigma_v + ir$.    Also, recall that because of the definition of $h=h_{T, \alpha}$ in \eqref{e:hdef}, which appears in the integral as $h(u/i)$,  the variable $\gamma= \Im(u)$ is effectively restricted to the range
\begin{equation}
\left|T-|\gamma|\right| \ll T^\alpha.
\end{equation}
We should remark that our work doesn't depend on the specific choice of $h$. 
This particular $h$ is relevant only in the sections where explicit error bounds are computed.

Recalling \eqref{e:OD+cuspint}, \eqref{e:Zcusp} and \eqref{e:M_Gamma}, 
\begin{multline}\label{e:OD+cuspint_explicit}
\OD^+_{\cusp, \intg}(s, t) = \frac{i^k (2\pi)^{2it} (4\pi)^k}{2}
\frac{4}{2\pi i}\int_{(\sigma_u)} \frac{h(u/i)u}{\cos(\pi u)}
\frac{1}{\Gamma\left(-u+it+\frac{k}{2}\right)\Gamma\left(u+it+\frac{k}{2}\right)}
\\ \times \frac{1}{2\pi i} \int_{\left(\frac54+\frac k2 \right)}
\frac{\Gamma\left(v-it\right) \Gamma\left(v+it\right) \Gamma\left(-v+u+\frac{k}{2}\right)}
{\Gamma\left(v+u+1-\frac{k}{2}\right)}
\\ \times \frac{1}{\Gamma\left(s-v+k-\frac{1}{2}\right)}
\sum_j  (-1)^{\epsilon_j} \scrL(s, it; \overline{u_j}) 
\frac{\Gamma\left(s-v-ir_j\right) \Gamma\left(s-v+ir_j\right) 
\Gamma\left(\frac{1}{2}-s+v\right)}
{\Gamma\left(\frac{1}{2}-ir_j\right)\Gamma\left(\frac{1}{2}+ir_j\right)}
\left<u_j, V_{f, g}\right>
\; dv \; du.
\end{multline}
Let $T>0$ and recall that $v = \sigma_v+ir$, $|t| = |t'| =  T^\beta$ (here we take $s=\frac{1}{2}-it'$), and $u = \sigma_u + i\gamma$
with $1+\epsilon < \sigma_u< \frac{3}{2}$. 
By Stirling's formula, the exponential part of 
the asymptotic behavior of the ratio of gamma functions in \eqref{e:OD+cuspint_explicit} 
is	
\begin{equation}	
\exp\left(-\frac{\pi}{2}\big[2|\gamma| - 2\max(|\gamma|,|t|)+2\max(|r|, |t|)+ |\gamma-r| -|\gamma +r| 
+ 2\max(|t'+r|, |r_j|) -2|r_j| \big]\right).
\end{equation}
As $|t| = T^\beta$, with $\beta<1$, and $h(u/i)$ decays exponentially when $||\gamma |- T|| >T^\alpha$, 
then, as we will always choose $\sigma \leq \frac{2}{3}$, we may assume $|t | < |\gamma|$.
Then $2|\gamma| - 2\max(|\gamma|,|t|) = 0$. 
Also
 
\begin{equation}
|-r+ \gamma|-|r+\gamma| 
= \begin{cases}
-2\min(|r|, |\gamma|), & \text{ if } r, \gamma\,\text{have the same sign}, \\
2\min(|r|, |\gamma|), & \text{ if } r,\gamma\, \text{have opposite signs}.
\end{cases}
\end{equation}

We assume the worst case, that $r$ and $\gamma$ have the same sign.    
Then the exponential part becomes
\begin{equation}
\exp\left(-\frac{\pi}{2}\left(2\max(|r|, |t|)  -2\min(|r|, |\gamma|)+ 2\max(|t'+r|, |r_j|) -2|r_j|\right)\right).
\end{equation}
If $|r| >|\gamma|$,  then, as  $|t| < |\gamma|$, and $2\max(|t'+r|, |r_j|) -2|r_j| \geq 0$,  
there is exponential decay in $|r|$, while if $|r| \leq |\gamma|$, 
then the exponential part becomes
\begin{equation}
\exp\left(-\frac{\pi}{2}\left(2\max(|r|, |t|)  -2|r|+ 2\max(|t'+r|, |r_j|) -2|r_j|\right)\right).
\end{equation}
Therefore we conclude that 
the only situation in which there is no exponential decay is the case when 
$\gamma$, $r$ have the same sign, $|\gamma| \geq |r| \geq |t| = |t'|$
and $|r_j| \geq |t'+r|$. 
By Stirling's formula in bounded vertical strips
the absolute value of the non-exponential part of the integrand of \eqref{e:OD+cuspint_explicit} 
is bounded above by a constant multiple of
\begin{multline}\label{piece1}
|\gamma| |-\gamma +t|^{\sigma_u-\frac{k}{2}+\frac{1}{2}}|\gamma +t|^{-\sigma_u-\frac{k}{2}+\frac{1}{2}}
|r-t|^{\sigma_v-\frac{1}{2}}|r+t|^{\sigma_v-\frac{1}{2}}
|\gamma-r|^{\sigma_u-\sigma_v+\frac{k}{2}-\frac{1}{2}}
|\gamma+r|^{-\sigma_u-\sigma_v+\frac{k}{2}-\frac{1}{2}}|t' +r|^{2\sigma_v -k}
\\ \times \sum_{|r_j|\ge |r+t'|}\left(|r_j|-|r+t'|\right)^{-\sigma_v}
\left(|r_j|+|r+t'|\right)^{-\sigma_v} \scrL(s, it; \overline{u_j}) \left<u_j, V_{f, g}\right>.
\end{multline}
Consider only the piece involving $\sigma_u$, we have two cases: $|\gamma|-|r| \gg 1$, and $|\gamma|-|r| \ll 1$.   
The second is treated the same as the first, as, by abuse of notation, 
when we write our absolute values, by $|x|$ we really mean $\max(|x|,1)$.   

Recall that the only situation in which there is no exponential decay is the case when $|\gamma| \geq |r| \geq |t| = |t'|$. 
The situations with $\gamma >0$ and $\gamma <0$ are symmetrical, 
so we will, without loss of generality, assume that $\gamma>0$.   
As $\gamma$, $r$ have the same sign,  this means $\gamma \geq r \geq |t| = |t'| \geq 0$, so $r-|t| \ge0$. 
The piece involving $\sigma_u$ can now be written as
\begin{equation}
\left(\frac{|\gamma -r|}{|\gamma +r|} \right)^{\sigma_u}
\left(\frac{|-\gamma +t|}{|\gamma +t|} \right)^{\sigma_u}
= \left(\frac{1-\frac{r}{\gamma}}{1+\frac{r}{\gamma}} \right)^{\gamma\frac{\sigma_u}{\gamma}}
\left(\frac{1-\frac{t}{\gamma}}{1+\frac{t}{\gamma}} \right)^{\gamma\frac{\sigma_u}{\gamma}}
\ll e^{-2(r+t)\frac{\sigma_u}{\gamma}}.
\end{equation}
Now move the $u$ line of integration to $\sigma_u=CT^\alpha$, for some $C\gg1$, 
which is the largest we can make it while still keeping $|h(u/i)| \ll 1$.  
No poles are passed over in this process.  

\begin{remark}\label{remark:stirling}
This is actually a little tricky.   
Stirling's formula:
\begin{equation}
|\Gamma(\sigma_u + i\gamma)| \sim\sqrt{2 \pi}|\gamma|^{\sigma_u - \frac12}e^{-\frac{\pi}{2}|\gamma|} 
\quad \text{as} \quad |\gamma|\rightarrow \infty
\end{equation}
is usually stated as being true as long as $A < \sigma_u <B$ for fixed $A,B$.   
However, a close examination of the derivation of this formula shows that this asymptotic remains true 
as long as $|A|,|B| \ll |\gamma|^{\frac 23}$.   
As in this context we are moving $\sigma_u$ to $CT^\alpha$, with $\alpha <\frac{2}{3}$, 
and the integral decays quadratically exponentially when $||\gamma| - T| \gg T^\alpha$, the asymptotic remains valid.
\end{remark}

As $\left||\gamma|-T \right|\ll T^\alpha$, (the integral has exponential decay if this is not the case), 
and $r \geq |t|$, it follows that there is arbitrarily strong polynomial decay unless 
\begin{equation}
r +t\ll T^{1-\alpha}.
\end{equation}
If $r$ and $t$ are the same sign (in this case, both positive, as $r$ is positive), 
then this implies that $r, t \ll T^{1-\alpha}$, and in particular that, as $|t| = T^\beta$, $\beta \leq 1-\alpha$.  
If $r$ and $t$ have opposite signs, then $r-T^\beta \ll T^{1-\alpha}$.  
If $\beta >1-\alpha$ this implies that $r \ll T^\beta$, while if $\beta \leq 1-\alpha$ then this implies that $r \ll T^{1-\alpha}$.  
In both cases, as $\beta <1$
\begin{equation}
T \ll |\gamma \pm r| \ll T\quad \text{and} \quad T \ll |\gamma \pm t| \ll T.
\end{equation}
Substituting into \eqref{piece1}, we find that $\OD_{\cusp, \intg}^+(s, t)$
is bounded above by a constant times the integral over $\left||\gamma|-T \right|\ll T^\alpha$ and $T^\beta \le r \ll T^\beta + T^{1-\alpha}$ of
\begin{multline}\label{piece2}
T^{1-2\sigma_v} \left| r-t\right|^{\sigma_v -\frac{1}{2}}
\left| r+t\right|^{\sigma_v -\frac{1}{2}}
\left| t'+r \right|^{2\sigma_v -k}\\ \times
\sum_{|r_j|\ge |r+t'|}\left(|r_j|-|r+t'|\right)^{-\sigma_v}
\left(|r_j|+|r+t'|\right)^{-\sigma_v}
(-1)^{\epsilon_j} \scrL(s, it; \overline{u_j})\left<u_j, V_{f, g}\right>.
\end{multline}

We now require the following lemma to estimate the sum over the $r_j$.

\begin{lemma}\label{lem:sieve}
For fixed $C>0$, and $|t|=|t'| = T^\beta$, 
\begin{equation}
\sum_{|r_j|< T^C} (-1)^{\epsilon_j} \scrL(1/2-it', it; \overline{u_j})\left<u_j, V_{f, g}\right>
\ll T^{Ck+\frac{\max\{C, \beta\}+C}{2}+\epsilon}. 
\end{equation}

\end{lemma}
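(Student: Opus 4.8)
The plan is to prove Lemma~\ref{lem:sieve} by Cauchy--Schwarz, isolating the two factors $\scrL(1/2-it',it;\overline{u_j})$ and $\left<u_j,V_{f, g}\right>$ and inserting the weights $e^{\mp\pi|r_j|}$ so that the factor involving $V_{f, g}$ is controlled by the triple product bound \eqref{e:innerbound}. Since $\cosh(\pi r_j)\asymp e^{\pi|r_j|}$, Cauchy--Schwarz followed by \eqref{e:innerbound} with $T$ replaced by $T^C$ gives
\[
\left|\sum_{|r_j|<T^C}(-1)^{\epsilon_j}\scrL(1/2-it',it;\overline{u_j})\left<u_j,V_{f, g}\right>\right|^2
\ll T^{2Ck+\epsilon}\sum_{|r_j|<T^C}\frac{\left|\scrL(1/2-it',it;\overline{u_j})\right|^2}{\cosh(\pi r_j)},
\]
so it suffices to establish the second-moment bound
\begin{equation}\label{e:sieve_reduced}
\sum_{|r_j|<T^C}\frac{\left|\scrL(1/2-it',it;\overline{u_j})\right|^2}{\cosh(\pi r_j)}\ll T^{C+\max\{C,\beta\}+\epsilon}.
\end{equation}

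For \eqref{e:sieve_reduced} I would first factorise $\scrL(1/2-it',it;\overline{u_j})$ using Lemma~\ref{lem:scrL_fac_cusp}: if $u_j$ has conductor $L\mid N$ then
\[
\scrL(1/2-it',it;\overline{u_j})=L\!\left(\tfrac12-it'+it,u_j\right)L\!\left(\tfrac12-it'-it,u_j\right)\Xi_j,
\]
where $\Xi_j$ gathers $\rho_{L,j}(1)$, the oldform constants $c_L(r_j;d)$ for $d\mid N/L$, and the Euler polynomials $\scrP_d(s,it;u_j)$. On $\Re(s)=\tfrac12$ one has $\scrP_d(s,it;u_j)\ll_N 1$, $c_L(r_j;d)\ll_N(1+|r_j|)^\epsilon$, and, by the lower bound in \eqref{weight} applied to $u_{L,j}$, $|\rho_{L,j}(1)|^2/\cosh(\pi r_j)\ll_N(1+|r_j|)^\epsilon$, so $|\Xi_j|^2/\cosh(\pi r_j)\ll T^\epsilon$. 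With $t'=\pm t$ and $|t|=T^\beta$ the left side of \eqref{e:sieve_reduced} is thus
\[
\ll T^\epsilon\sum_{|r_j|<T^C}\left|L\!\left(\tfrac12,u_j\right)\right|^2\left|L\!\left(\tfrac12\pm2it,u_j\right)\right|^2.
\]

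A further Cauchy--Schwarz in $j$ now reduces \eqref{e:sieve_reduced} to the two spectral fourth-moment bounds $\sum_{|r_j|<R}|L(\tfrac12,u_j)|^4\ll R^{2+\epsilon}$ and $\sum_{|r_j|<R}|L(\tfrac12+iv,u_j)|^4\ll(R+|v|)^{2+\epsilon}$, the latter uniform for $|v|\ll R^A$: with $R=T^C$ and $v=\mp2t$, taking square roots and multiplying yields $R(R+|v|)(R+|v|)^\epsilon\ll T^{C+\max\{C,\beta\}+\epsilon}$. To prove these I would, for a fixed newform $u_{L,j}$ of level $L\mid N$, open $|L(\tfrac12+iv,u_{L,j})|^2$ by the approximate functional equation for $L(s,u_{L,j})$ (analytic conductor $\asymp(|r_j|+|v|)^2$) and use the Hecke relations $\lambda_j(m_1)\lambda_j(m_2)=\sum_{d\mid(m_1,m_2)}\lambda_j(m_1m_2/d^2)$ to rewrite it as a single Dirichlet polynomial $\sum_{n\ll(R+|v|)^{2+\epsilon}}\beta_n\lambda_j(n)n^{-1/2}$ (plus its dual), whose coefficients satisfy $\beta_n\ll(R+|v|)^\epsilon$ and, crucially, do \emph{not} depend on $j$. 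Squaring, summing over $|r_j|<R$ and invoking the Deshouillers--Iwaniec spectral large sieve for $\Gamma_0(L)$ gives $\ll(R^2+(R+|v|)^2)(R+|v|)^\epsilon$; the lower bound in \eqref{weight} then removes the harmonic weight $|\rho_j(1)|^2/\cosh(\pi r_j)$ at the cost of $(1+|r_j|)^\epsilon\ll T^\epsilon$. Summing over the finitely many $L\mid N$ and oldform copies $d\mid N/L$ gives the bounds for the full basis $\{u_j\}$.

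The genuinely delicate step is the shifted fourth moment $\sum_{|r_j|<R}|L(\tfrac12+iv,u_j)|^4\ll(R+|v|)^{2+\epsilon}$ with the right uniformity in $v=\mp2t$ over the whole range $|v|=T^\beta$, $\beta<1$. It is exactly the square-root saving from this fourth moment that produces the exponent $\tfrac12(C+\max\{C,\beta\})$: treating $\scrL(\tfrac12-it',it;\overline{u_j})$ instead as a single degree-$4$ Dirichlet series of length $\asymp(|r_j|+|t|)^2$ and applying the large sieve directly would only give $T^{2\max\{C,\beta\}+\epsilon}$ for the left side of \eqref{e:sieve_reduced}, which is insufficient once $\beta>C$. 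Everything else is routine bookkeeping of the level-$N$ Euler factors and the oldform coefficients.
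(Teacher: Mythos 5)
Your proposal is correct and follows essentially the same route as the paper: Cauchy--Schwarz against the Bernstein--Reznikov triple product bound \eqref{e:innerbound}, the factorization of $\scrL(1/2-it',it;\overline{u_j})$ from Lemma~\ref{lem:scrL_fac_cusp} with the Hoffstein--Lockhart bounds to strip off the oldform and normalization factors, a second Cauchy--Schwarz to reduce to two spectral fourth moments, and the approximate functional equation combined with the Hecke relations and the spectral large sieve to bound those fourth moments by $T^{2C+\epsilon}$ and $T^{2\max\{C,\beta\}+\epsilon}$ respectively. The exponent bookkeeping matches the paper's exactly.
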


\begin{proof}
By \cite[Proposition~4.1]{HHR}
\begin{equation}\label{tjsum}
\left(\sum_{|r_j|<T^C}e^{\pi|r_j|}\left| \left<u_j, V_{f, g}\right>\right|^2\right)^{\frac{1}{2}} \ll T^{Ck +\epsilon}.
\end{equation}
Applying Cauchy-Schwartz and substituting \eqref{tjsum}, 
\begin{equation}
\sum_{|r_j|< T^C} (-1)^{\epsilon_j} \scrL(s, it; \overline{u_j})\left<u_j, V_{f, g}\right>
\ll T^{Ck+\epsilon} \bigg(\sum_{|r_j|< T^C} e^{-\pi |r_j|} |\scrL(s, it; \overline{u_j})|^2\bigg)^{\frac{1}{2}}.
\end{equation}
Recalling \eqref{e:scrL_fac_cusp}, taking $s=\frac{1}{2}-it'$ with $|t|=|t'|$, we get
\begin{equation}
\scrL(1/2\pm t, it; \overline{u_j})
= L(1/2\pm 2it, \overline{u_j}) L(1/2, \overline{u_j}) 
N^{-\frac{1}{2}\pm it-it}\sum_{d\mid\frac{N}{L}} \overline{c_{L}(r_j; d) \rho_{L, j}(1)} d^{\frac{1}{2}-it} 
\scrP_d(1/2\pm it, it; \overline{u_j}).
\end{equation}
By estimating $\scrP_d(1/2\pm it, it; \overline{u_j}) \ll 1$, and applying Cauchy-Schwartz again, 
\begin{equation}
\left|\scrL(1/2\pm it, it; u_j)\right|^2
\ll \left|\rho_{L, j}(1)\right|^2 \left|L(1/2\pm 2it, u_j) L(1/2, u_j)\right|^2
N^{-1} \sum_{d\mid\frac{N}{L}} |c_{L}(r_j; d)|^2d
\end{equation}
and 
\begin{multline}
\sum_{|r_j|< T^C} (-1)^{\epsilon_j} \scrL(1/2\pm it, it; \overline{u_j})\left<u_j, V_{f, g}\right>
\\ \ll T^{Ck+\epsilon} \bigg(\sum_{|r_j|< T^C} e^{-\pi |r_j|} \left|\rho_{L, j}(1)\right|^2 
\sum_{d\mid\frac{N}{L}} |c_{L}(r_j; d)|^2
\left|L(1/2\pm 2it, u_j)\right|^2 \left|L(1/2, u_j)\right|^2 \bigg)^{\frac{1}{2}}. 
\end{multline}
By \cite[Appendix]{H-L} 
\begin{equation}
\rho_{L, j}(1)\sum_{d\mid \frac{N}{L}} c_L(r_j; d) \ll e^{\frac{\pi}{2}|r_j|} |r_j|^{\epsilon}.
\end{equation}
So we get 
\begin{equation}
\sum_{|r_j|< T^C} (-1)^{\epsilon_j} \scrL(1/2\pm it, it; \overline{u_j})\left<u_j, V_{f, g}\right>
\ll T^{Ck+\epsilon} \bigg(\sum_{|r_j|< T^C} \left|L(1/2\pm 2it, u_j)\right|^2 \left|L(1/2, u_j)\right|^2\bigg)^{\frac{1}{2}}. 
\end{equation}
Applying Cauchy-Schwartz again, this is 
\begin{equation}\label{e:reapply-CS}
\ll_N T^{Ck+\epsilon} \bigg(\sum_{|r_j| < T^C} \left|L(1/2\pm 2it, u_j)\right|^4\bigg)^{\frac{1}{4}} 
\bigg(\sum_{|r_j| < T^C} \left|L(1/2, u_j)\right|^4\bigg)^{\frac{1}{4}}. 
\end{equation}

We will use the spectral large sieve inequality 
(see \cite[(3.28)]{ky17} and \cite[Theorem~7.24]{IK04}; 
note that there is a typo in the statement of \cite[Theorem~7.24]{IK04}) 
which gives us, translating into our notation,
\begin{equation}\label{sieve}
\sum_{|r_j| \leq T^C} \Big| \sum_{m \leq M} a(m) \rho_j (m)e^{-\frac{\pi}{2}|r_j|} \Big|^2 \ll_\epsilon (T^{2C} N +M)(MNT)^\epsilon \left( \sum_{m \leq M} |a(m)|^2 \right)
\end{equation}
where $a(m)$ are arbitrary complex numbers, $u_j$ form an orthonormal basis of Maass forms of level $N$ 
and $\rho_j$ are the $L^2$-normalized Fourier coefficients of $u_j$.   

To apply this to our $L$-series, we note first that as $|r_j| \ll T^C$, the analytic conductor of $L(1/2\pm 2it, u_j)^2$ 
is on the order of $T^{4\max\{C, \beta\}} N^2$, 
and therefore, using the approximate functional equation, $L(1/2\pm 2it, u_j)^2$ can be approximated by a partial sum of length $M=T^{2\max\{C, \beta\}} N$, the square root of the analytic conductor. 
Similarly $L(1/2, u_j)^2$ can be approximated by a partial sum of length $T^{2C} N$. 
To write $L(1/2+ait, u_j)^2$, with $a=0$ or $\pm 2$ in a suitable form to apply the sieve, 
note that by the Hecke relations for $\lambda_j(m)$'s, for $\Re(w)>1$,  
\begin{equation}
L(w, u_j)^2 
= \sum_{m_1,  m_2\geq 1} \frac{\lambda_j(m_1) \lambda_j(m_2)}{(m_1m_2)^w} 
= \sum_{m\geq 1} \frac{\sigma_0(m)\sum_{d^2\mid m} \lambda_j(m/d^2)}{m^w}
= \sum_{d, m\geq 1} \frac{\sigma_0(md^2) \lambda_j(m)}{(d^2m)^w}.
\end{equation} 
Therefore, by the approximate functional equation, $L(1/2+ait, u_j)^2$ can be approximated by a Dirichlet series of 
length 
\begin{equation}
M(a) = \begin{cases} T^{2\max\{C, \beta\}}N & \text{ when }a=\pm 2, \\ 
T^{2C} N & \text{ when } a=0,  \end{cases}
\end{equation}
with numerator $\sigma_0(md^2) \lambda_j(m)$ and denominator $(d^2m)^{1/2+ait}$. 
Applying \eqref{sieve}, 
\begin{multline}
\sum_{|r_j|\leq T^C} 
\left|\sum_{m\leq M(a)} \bigg(\sum_{1\leq d\leq \sqrt{\frac{M(a)}{m}}} \frac{\sigma_0(md^2)}{(d^2m)^{\frac{1}{2}+ait}}\bigg) \lambda_j(m)\right|^2
\\ \ll_\epsilon (T^{2C}N+ M(a)) (M(a) NT)^{\epsilon} 
\bigg(\sum_{m\leq M(a)} \bigg|\sum_{1\leq d\leq \sqrt{\frac{M(a)}{m}}} \frac{\sigma_0(md^2)}{(d^2m)^{\frac{1}{2}+ait}}\bigg|^2\bigg)
\\ \ll_{\epsilon} (T^{2C}N+ M(a)) (M(a) NT)^{\epsilon}. 
\end{multline}
Here we write $\lambda_j(m)$ instead of $\rho_j(m)e^{-\frac{\pi}{2}|r_j|}$, because, again by \cite{H-L},
\begin{equation}
e^{\frac{\pi}{2}|r_j|}L^{-\epsilon} \ll_{\epsilon} \rho_{L,j}(1) \ll_{\epsilon}  e^{\frac{\pi}{2}|r_j|} L^\epsilon.
\end{equation}

Substituting the above into \eqref{e:reapply-CS} finally gives us the upper bound we need: 
\begin{equation}
\sum_{|r_j|< T^C} (-1)^{\epsilon_j} \scrL(1/2\pm it, it; \overline{u_j})\left<u_j, V_{f, g}\right>
\ll_N T^{Ck+\epsilon} T^{\frac{\max\{C, \beta\}+C}{2}}. 
\end{equation}
\end{proof}

Referring to \eqref{piece2} it is easily checked that for $\sigma_v = \frac{5}{4}+\frac{k}{2}$
 the portion of the sum with  $|r_j| >2|t'+r|$ converges absolutely 
and is dominated by the portion with $|r_j| \le 2|t'+r|$, where the bounds for the first ratio of gamma factors is weakest.  
Applying this to \eqref{piece2}, we use the bounds  $\left(|r_j|-|r+t'|\right)^{-\sigma_v}\ll1$ and 
$\left(|r_j|+|r+t'|\right)^{-\sigma_v}\ll |r+t'|^{-\sigma_v}$.
We are then finally reduced to finding an upper bound for the integral over 
$\left||\gamma|-T \right|\ll T^\alpha$ and $T^\beta \le |r| \le T^\beta + T^{1-\alpha} \log T$ of
\begin{equation}\label{orig}
T^{1-2\sigma_v} \left| r-t\right|^{\sigma_v -\frac{1}{2}}
\left| r+t\right|^{\sigma_v -\frac{1}{2}}
\left| t'+r \right|^{\sigma_v -k}T^{\frac{\max\{C,\beta\}}{2} +\frac{C}{2} + Ck + \epsilon}
\end{equation}
Here, in Lemma~\ref{lem:sieve},  $T^C = |r+t'|$.
An upper bound can be found easily by simply calculating the coefficient of $\sigma_v$.   
Interestingly, the calculation is quite different in the cases $t = t'$ and $t = -t'$.   

First, suppose $t = t'$.  
Then \eqref{orig} becomes
\begin{equation}
T^{1-2\sigma_v} \left| r-t\right|^{\sigma_v -\frac{1}{2}} \left| r+t\right|^{2\sigma_v -k-\frac{1}{2}}
T^{\frac{\max\{C,\beta\}}{2} +\frac{C}{2} + Ck + \epsilon}
\end{equation}
We then have two sub cases, $t$, $r$ have the same sign, and $t$, $r$ have opposite signs.  
As $|t + r| \ll T^{1-\alpha}$, if $t$, $r$ have the same sign then both are positive 
(recall $r$ has the same sign as $\gamma$, which is positive), 
which implies that $t,r \ll T^{1-\alpha}$, $\beta \le 1- \alpha$ and $C = 1-\alpha$ in Lemma~\ref{lem:sieve}. 
 
As it follows that
$\left| r-t\right|\ll T^{1-\alpha}$, the above is 
\begin{equation}\label{same}
\ll T^{1-2\sigma_v +(1-\alpha)(3\sigma_v -1-k) + (1-\alpha) +(1-\alpha)k + \epsilon}=T^{1-\sigma_v(3\alpha -1) + \epsilon}
\end{equation}
the coefficient of $\sigma_v$ in the exponent above is $1-3\alpha$, which is not positive when $\alpha \ge 1/3$.

If $t$, $r$ have opposite signs, then $\left| r+t\right|= \left| r- t^\beta\right|\ll T^{1-\alpha}$.   
In this case \eqref{same} is replaced by
\begin{equation}\label{opp}
T^{1-2\sigma_v+(1-\alpha)(2\sigma_v -\frac{1}{2}-k)+\beta(\sigma_v-\frac{1}{2})+\frac{\max\{1-\alpha,\beta\}}{2} 
+ \frac{1-\alpha}{2}+ (1-\alpha)k + \epsilon} 
= T^{\sigma_v(\beta-2\alpha) + 1-\frac{\beta}{2} +\frac{\max\{1-\beta, \alpha\}}{2}+1+\epsilon}.
\end{equation}
Here the coefficient of $\sigma_v$ is $\beta-2\sigma $, which is not positive when $\sigma \ge \beta/2$.
If $\beta >1-\sigma$ then the exponent in \eqref{opp} is less than or equal to $1+\epsilon$.  If $\beta \le 1-\alpha$, then, as $\sigma_v >1/2$ , the exponent in \eqref{opp} equals $ \sigma_v(1-3\alpha) + 1 + \epsilon$ which is less  $1 +\epsilon$ as $\alpha >1/3$.

Now suppose $t=-t'$.  
If $t$, $r$ have the same sign then $t'$, $r$ have opposite signs and 
\begin{equation}\label{oppsame}
\left| r+t'\right| = \left| r-t\right|  \ll T^{1-\alpha},
\end{equation}
so $C=1-\alpha$ in Lemma~\ref{lem:sieve} and \eqref{orig} becomes
\begin{equation}\label{same2}
T^{1-2\sigma_v +(1-\alpha)(2\sigma_v -1/2-k) +\beta(\sigma_v-1/2)  +\max(1-\alpha,\beta)/2 + (1-\alpha)/2+ (1-\alpha)k + \epsilon} =T^{\sigma_v (\beta-2\alpha)+1-\frac\beta 2 +\frac{\max(1-\alpha,\beta)}{2}   + \epsilon}
\end{equation}
Here, as before, the coefficient of $\sigma_v$ is $\beta-2\alpha $, which is negative when $\alpha > \frac{\beta}{2}$. 
If $\beta > 1-\alpha$ then the exponent in \eqref{same2} is $\sigma_v (\beta-2\alpha)+1+ \epsilon<1+\epsilon$. 
If $\beta \le 1-\alpha$ then the exponent in \eqref{same2} is $\sigma_v(1-3\alpha) +1 +\epsilon<1+\epsilon$ as well. 

If $t$, $r$ have the opposite signs then $t'$, $r$ have the same sign, 
the constant $C$ in Lemma~\ref{lem:sieve} is replaced by $\beta$ 
and instead of \eqref{same2} we have, after simplifying a bit,
\begin{equation}\label{opp2}
T^{1-2\sigma_v +\beta\sigma_v-\frac \beta 2 +(1-\alpha)\sigma_v +\sigma_v \beta +\frac{\max(1-\alpha,\beta)}{2}}=T^{\sigma_v(2\beta -\alpha -1)+1-\frac \beta 2+\frac{\max(1-\alpha,\beta)}{2}+\epsilon}.
\end{equation} 
Here the coefficient of $\sigma_v$ is $2\beta -1-\alpha$, which is again negative when $\alpha > 2\beta-1$.
If $\beta > 1-\alpha$ the exponent in \eqref{opp2} becomes $\sigma_v(2\beta -\alpha -1)+1+\epsilon< 1+\epsilon$, 
and if $\beta \le 1-\alpha$ as $\sigma_v > \frac{1}{2}$, 
the exponent becomes $\sigma_v(1-3\alpha) + 1 +\epsilon< 1+\epsilon$.

Similarly, we get the upper bound for $\OD^+_{\cont, \intg}(s, t)$. 
The proof is identical, but instead of using  the spectral part of \eqref{e:innerbound} we use
\begin{equation}
\sum_\cuspa \int_{-T}^T |\left<E_{\cuspa}(*, 1/2 + it), V_{f, g}\right>|^2e^{\pi |t|}dt\ll_{N,k}T^{2k}\log(T).
\end{equation}
Also, instead of using Lemma~\ref{lem:sieve} 
we use the mean value theorem for integrals of Dirichlet plynomials, \cite[Theorem~9.1]{IK04}.
This completes the proof of Proposition~\ref{prop:ub_OD+ccint}.
\qed

\subsection{Upper bounds for $\OD_{\cusp, \res}^+(s, t)$ and $\OD_{\cont, \res}^+(s, t)$}\label{sect:4.5}

Recall that we have set $s = \frac{1}{2} -it'$, with $|t'| = |t|$ and $|t| = T^\beta$, with $0\leq \beta<1$.
The objective of this section is to prove the following 
\begin{proposition}\label{prop:OD+res_upper}
Fix $\alpha, \beta$, with $0\le \beta<1$, and $\frac{1}{3}< \alpha <\frac23$. 
If $2\alpha > \beta$  then for $\beta \le 1-\alpha$
\begin{equation}
\OD^+_{\cusp, \res}(s, t), \; \OD^+_{\cont, \res}(s, t)\ll_{N} T^{1+\alpha - \frac{3\alpha -1}{2}+\epsilon}.
\end{equation}
For $2\alpha > \beta >1-\alpha$
\begin{equation}
\OD^+_{\cusp, \res}(s, t), \; \OD^+_{\cont, \res}(s, t)\ll_{N} T^{1+\alpha - (2\alpha-\beta)\frac{k+1}{2}+\epsilon}.
\end{equation}

\end{proposition}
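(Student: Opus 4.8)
The plan is to adapt the argument of §\ref{ss:OD+intg_upper} to the residue pieces, the new inputs being an explicit evaluation of the residues of $M(s-v+\tfrac12,r_j)$ and the spectral large sieve packaged in Lemma~\ref{lem:sieve}. First I would set $s=\tfrac12-it'$ with $|t'|=|t|=T^\beta$ and record that, writing $w=s-v+\tfrac12$, the pole of $M(w,r_j)$ at $w=\tfrac12\mp ir_j-\ell$ comes from a single gamma factor, so
\[
\Res_{s-v=-\ell\pm ir_j}M(s-v+\tfrac12,r_j)
=\frac{(-1)^\ell}{\ell!}\,\sqrt{\pi}\,2^{\ell\mp ir_j}\,
\frac{\Gamma(\pm 2ir_j-\ell)\,\Gamma(\tfrac12+\ell\mp ir_j)}{\Gamma(\tfrac12-ir_j)\,\Gamma(\tfrac12+ir_j)}.
\]
Substituting this into \eqref{e:OD+cuspres}, the integrand over $u$ becomes a ratio of gamma functions in $u$, an $(r_j,\ell)$--dependent constant, and the product $\scrL(s,it;\overline{u_j})\left<u_j,V_{f, g}\right>$. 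The key observation, checked by Stirling, is that
\[
\frac{2^{-\ell\pm ir_j}}{\Gamma(\pm ir_j+k-\ell-\tfrac12)}\cdot
\Res_{s-v=-\ell\pm ir_j}M(s-v+\tfrac12,r_j)
\]
carries no net exponential factor in $|r_j|$ and is of polynomial size $\asymp|r_j|^{-k+\ell+\frac12}$; this is what lets the spectral sum be controlled by Lemma~\ref{lem:sieve}.

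Next I would rerun the Stirling analysis of §\ref{ss:OD+intg_upper}, the only change being that the integration variable $v$ now takes the residual value $v=s+\ell\mp ir_j$, so that $\sigma_v=\Re(v)=\tfrac12+\ell$ (in place of the fixed line $\tfrac54+\tfrac k2$) and $\Im(v)=-t'\mp r_j$ (in place of the free $r$). Writing $u=\sigma_u+i\gamma$, the weight $h=h_{T,\alpha}$ restricts $\gamma$ to $||\gamma|-T|\ll T^\alpha$; within this range the $u$--dependent gamma factors together with $u/\cos(\pi u)$ are of polynomial size $\asymp|\gamma|^{-2\ell}$ in $\gamma$ (with an exponential factor in $\gamma,r_j,t'$ handled below), so the $\gamma$--integral contributes $\asymp T^{\alpha-2\ell}$. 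Tracking the exponential factors in $\gamma,r_j,t,t'$ as in the case analysis there (with the same split on $\sgn(\gamma)$ relative to $r_j$ and on $t'=\pm t$), and then moving the $u$--line to $\sigma_u=CT^\alpha$ as permitted by Remark~\ref{remark:stirling} --- no poles are crossed, since the only $u$--poles in $1\le\sigma_u\le CT^\alpha$ would come from $\Gamma(-s-\ell\pm ir_j+u+\tfrac k2)$, whose real part is $\le\tfrac12$ --- one finds that the integrand is exponentially small in $T$ unless $|r_j|\ll\max(T^\beta,T^{1-\alpha})$ up to logarithmic factors.

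In that window I would apply Lemma~\ref{lem:sieve} with $T^C=\max(T^\beta,T^{1-\alpha})$, i.e.\ $C=\max(\beta,1-\alpha)$, giving $\sum_{|r_j|\le T^C}(-1)^{\epsilon_j}\scrL(s,it;\overline{u_j})\left<u_j,V_{f, g}\right>\ll T^{Ck+\frac{\max\{C,\beta\}+C}{2}+\epsilon}$. Combining this with the $T^{\alpha-2\ell}$ from the $\gamma$--integral, the factor $|r_j|^{-k+\ell+\frac12}\asymp T^{(-k+\ell+\frac12)C}$, and the $\sigma_v$--decay ($\sigma_v=\tfrac12+\ell$) supplied by the remaining gamma factors, the extreme term turns out to be $\ell=\tfrac k2$ (so $\sigma_v=\tfrac{k+1}2$), and the exponents simplify: when $C=1-\alpha\ge\beta$ the $k$--dependence cancels and one gets $T^{1+\alpha-\frac{3\alpha-1}{2}+\epsilon}$, while when $C=\beta>1-\alpha$ one gets $T^{1+\alpha-(2\alpha-\beta)\frac{k+1}{2}+\epsilon}$, the hypothesis $2\alpha>\beta$ guaranteeing that both saving exponents are positive and the contour shifts legitimate. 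For $\OD^+_{\cont, \res}(s,t)$ the argument is identical, replacing: the sum over $j$ by the cusp sum and $v$--integral of \eqref{e:Zcontres}; the factorization of $\scrL_{\cuspa}(s,it;z)$ from Lemma~\ref{lem:scrL_fac_Eis} together with the mean-value theorem for Dirichlet polynomials \cite[Theorem~9.1]{IK04} in place of Lemma~\ref{lem:sieve}; and the continuous analogue of \eqref{e:innerbound} in place of the discrete bound. The extra ingredient here is the meromorphic continuation of $\left<E_{\cuspa}(\ast,s-v+\ell),V_{f, g}\right>$ off the critical line via Rankin--Selberg unfolding and the functional equation of $E_{\cuspa}$, with its polynomial growth in $\Im(v)$ fed into the same estimate; the pole at $s-v+\ell=1$ does not lie on the contour used.

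The main obstacle will be the bookkeeping of the second and third steps: pinning down the exact $|r_j|$--window and the exact power of $T$ --- in particular producing the $k$--dependent saving $(2\alpha-\beta)\frac{k+1}{2}$ and checking that the $k$--dependence cancels when $\beta\le1-\alpha$ --- requires following carefully how the exponential factors from $\Gamma(-s-\ell\pm ir_j+u+\tfrac k2)/\Gamma(s+\ell\mp ir_j+u+1-\tfrac k2)$, from $\Gamma(s+\ell\mp ir_j-it)\Gamma(s+\ell\mp ir_j+it)$, and from the residue of $M$ combine with the $e^{\pi|r_j|}$ already absorbed into Lemma~\ref{lem:sieve}, and then optimising over $\ell$ and the endpoint value $\sigma_u=CT^\alpha$. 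As in §\ref{ss:OD+intg_upper} the sign sub-cases proliferate, and in each one checks that the configuration with no exponential decay is the dominant one.
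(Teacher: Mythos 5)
Your overall strategy --- Stirling analysis of \eqref{e:OD+cuspres}, truncation of the spectral sum by pushing $\sigma_u$ out to $\asymp T^\alpha$, then Lemma~\ref{lem:sieve} for the discrete part and the mean value theorem for Dirichlet polynomials for the continuous part --- is exactly the paper's, and your explicit residue of $M(w,r_j)$ together with the resulting polynomial size $|r_j|^{\ell+\frac12-k}$ (after dividing by $\Gamma(\pm ir_j+k-\ell-\tfrac12)$) agrees with the factors $|r_j|^{2\ell+1-k}|r_j|^{-\ell-\frac12}$ in the paper's bound \eqref{uupper}. There is, however, a genuine quantitative gap at the truncation step. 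The paper shows that the two $\sigma_u$-powered ratios, $\bigl(|{-\gamma}+t|/|\gamma+t|\bigr)^{\sigma_u}$ and $\bigl(|t'-r_j+\gamma|/|t'-r_j-\gamma|\bigr)^{\sigma_u}$, combine so that the $T^\beta$ contributions cancel, leaving $\ll e^{-2|r_j|\sigma_u/T}$; hence after moving $\sigma_u$ to $KT^\alpha$ the spectral sum is restricted to $|r_j|\ll T^{1-\alpha}$ for \emph{every} $\beta<1$, and Lemma~\ref{lem:sieve} is applied with $C=1-\alpha$. Your window $|r_j|\ll\max(T^\beta,T^{1-\alpha})$ with $C=\max(\beta,1-\alpha)$ is too large when $\beta>1-\alpha$: running the same bookkeeping with $C=\beta$ produces an exponent $\alpha+\tfrac{3\beta}{2}+\ell(3\beta-2)$ in place of the paper's $1+\tfrac{\beta}{2}+\ell(\beta-2\alpha)$, which is larger by $T^{\alpha+\beta-1}$ and does not yield the stated saving.

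Second, your assertion that ``the extreme term turns out to be $\ell=\tfrac k2$'' is backwards: under the hypothesis $2\alpha>\beta$ the coefficient of $\ell$ in the exponent is $1-3\alpha<0$ when $\beta\le1-\alpha$ and $\beta-2\alpha<0$ when $\beta>1-\alpha$, so the dominant residue is the one at $\ell=0$, i.e.\ the first pole crossed at $\sigma_v=\tfrac12$. The bound one actually extracts is $T^{1+\frac{1-\alpha}{2}+\epsilon}=T^{1+\alpha-\frac{3\alpha-1}{2}+\epsilon}$ in the first regime and $T^{1+\frac{\beta}{2}+\epsilon}$ in the second (which the paper then recasts in the form $1+\alpha-(2\alpha-\beta)\tfrac{k+1}{2}$); taking $\ell=k/2$ as extremal would yield a smaller, unjustified exponent. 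These two points are precisely the ``bookkeeping'' you defer at the end of your proposal, but they are where the content of the proposition lives, so the argument as written does not establish the stated bounds.
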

We consider $\OD_{\cusp, \res}^+(s, t)$ in \eqref{e:OD+cuspres}. 
As before, write $u=\sigma_u+i\gamma$. 
By Stirling's formula, the exponential growth of the product of all the gamma functions in the integrand of \eqref{e:OD+cuspres}, 
is given by 
\begin{equation}
\exp\left(-\frac{\pi}{2}\left(2|\gamma|-2\max\{|\gamma|, |t|\} + 2\max\{|-t'+r_j|, |t|\}+|t'-r_j+\gamma|-|-t'+r_j+\gamma|\right)\right). 
\end{equation}
As $|t|=T^\beta$ with $0 \leq \beta< 1$ and the worst case happens when $-t'+r_j$ and $\gamma$ have the same sign, 
in that case, the exponent becomes 
\begin{equation}
\exp\left(-\frac{\pi}{2}\left(2\max\{|-t'+r_j|, |t|\}-2\min\{|\gamma|, |-t'+r_j|\}\right)\right). 
\end{equation}
If $|-t'+r_j| < |t|$ then as $|t|< |\gamma|$ this becomes $2(|t|-|-t'+r_j|)$ and there is exponential decay.
If $|-t'+r_j|\geq |t|$ then as $|t'|=|t|$, $-t'$ and $r_j$ must have the same signs and the argument becomes 
$2(|t|+|r_j|-\min\{|t|+|r_j|, |\gamma|\})$. 
This is zero when $|t|+|r_j| \leq |\gamma|$. 

Thus there is no exponential decay precisely when $-t'$ and $r_j$ have the same sign, 
$-t'+r_j$ and $\gamma$ have the same sign, and $|t|+|r_j|\leq |\gamma|$. 
Also, by \eqref{e:hdef}, there is exponential decay in the integral when $||\gamma|-T| > T^\alpha$. 

Using the polynomial piece of Stirling's estimate, we find that the integrand in \eqref{e:OD+cuspres} is bounded above
by a constant times the sum over $r_j$ restricted to the set where $-t'$ and $r_j$ have the same sign, $-t'+r_j$ 
and $\gamma$ have the same sign, and $|t|+|r_j| \leq |\gamma|$ of 
\begin{multline}\label{uupper}
|\gamma||-\gamma+t|^{\sigma_u-\frac{k}{2}+\frac{1}{2}}|\gamma+t|^{-\sigma_u-\frac{k}{2}+\frac{1}{2}} 
\\ \times 
\sum_j|-t'+r_j -t|^\ell|-t'+r_j +t|^\ell
|t'-r_j +\gamma|^{\sigma_u+\frac{k}{2}-\ell -1}|-t'+r_j +\gamma|^{-\sigma_u+\frac{k}{2}-\ell -1} |r_j|^{2\ell +1-k}|r_j|^{-\ell -\frac{1}{2}}
\\\times (-1)^{\epsilon_j} \scrL(s, it; \overline{u_j}) \left<u_j, V_{f, g}\right>.
\end{multline}
Isolating the factors raised to the $\sigma_u$, we have 
\begin{equation}
\left(\frac{|-\gamma +t|}{|\gamma+t|}\right)^{\sigma_u}\left(\frac{|t'-r_j+\gamma |}{|t'-r_j-\gamma|}\right)^{\sigma_u}
\end{equation}
Recall $|t|=|t'|=T^\beta$ with $0\leq \beta< 1$, $-t'$ and $r_j$ have the same sign, and $-t'+r_j$, $\gamma$ have the same sign. 
Also, the worst case is when $\gamma$ and $t$ have opposite signs. 
Bearing these in mind, and using the fact that in the region without exponential decay $|\gamma|=T$, 
the above becomes 
\begin{equation}
\left(\frac{|T +T^\beta|}{|T-T^\beta|}\right)^{\sigma_u}\left(\frac{|T- T^\beta-|r_j|}{|T+ T^\beta+|r_j|}\right)^{\sigma_u}
= \left(\frac{1-\frac{T^\beta+|r_j|}{T}}{1+\frac{T^\beta+|r_j|}{T}}\right)^{T\frac{\sigma_u}{T}}
\left(\frac{1+\frac{T^\beta}{T}}{1-\frac{T^\beta|}{T}}\right)^{T\frac{\sigma_u}{T}}
\ll e^{-2|r_j|\frac{\sigma_u}{T}}.
\end{equation}
Now after moving $\sigma_u$ to $KT^\alpha$ for $K\gg 1$, (bearing in mind Remark~\ref{remark:stirling}, which justifies this), the right hand side becomes 
\begin{equation}
\ll e^{-\frac{2|r_j| K}{T^{1-\alpha}}}
\end{equation}
and so there is exponential decay unless $|r_j| \ll T^{1-\alpha}$. 

To estimate the sum over $r_j$, first note that, by \eqref{e:scrL_fac_cusp}, for $|t|=|t'|$, 
\begin{equation}
\scrL(1/2\pm it, it; \overline{u_j})
= L(1/2\pm 2it, \overline{u_j}) L(1/2, \overline{u_j}) 
N^{-\frac{1}{2}\mp it-it}\sum_{d\mid\frac{N}{L}} \overline{c_{L}(r_j; d) \rho_{L, j}(1)} d^{\frac{1}{2}-it} 
\scrP_d(1/2\pm it, it; \overline{u_j}).
\end{equation}
Then, as $0\leq \beta<1$, the $|\gamma|$ dominates in most of the terms, and the expression in \eqref{uupper} is bounded above 
by a constant times 
\begin{multline}\label{uupper2}
T^{-2\ell}\sum_j \left|2|t|\pm |r_j|\right|^{\ell} |r_j|^{2\ell+\frac{1}{2}-k} 
\\ \times \rho_{L, j}(-1) L(1/2+2it, u_j) L(1/2, u_j)
N^{-\frac{1}{2}\mp it-it} \sum_{d\mid \frac{N}{L}} \overline{c_{L}(r_j; d)} d^{\frac{1}{2}-it} \scrP_d(1/2\pm it, it; \overline{u_j})
\left<u_j, V_{f, g}\right>. 
\end{multline}
Now recall Lemma~\ref{lem:sieve} and set $C= 1-\alpha$.   
This gives us
\begin{equation}
\sum_{|r_j|< T^{1-\alpha}} (-1)^{\epsilon_j} \scrL(1/2-it', it; \overline{u_j}) \left<u_j, V_{f, g}\right>
\ll T^{\frac{1}{2}\max\{1-\alpha, \beta\}+\frac{1-\alpha}{2}+(1-\alpha)k+\epsilon}. 
\end{equation}
Combining with \eqref{uupper2} and performing the $u$ integration, 
which adds a $\alpha$ to the exponent, we find that 
\begin{multline}
\OD_{\cusp, \res}^+(s, t) 
\ll T^{-2\ell + \ell \max(\beta,1-\alpha)+(2\ell +1/2-k)(1-\alpha) +\frac{\max(\beta,1-\alpha)}{2}+\frac{1-\alpha}{2} + (1-\alpha)k+\alpha}
\\ = T^{-2\ell + \ell \max(\beta,1-\alpha)+(2\ell +1/2)(1-\alpha) +\frac{\max(\beta,1-\alpha)}{2}+\frac{1-\alpha}{2} +\alpha}.
\end{multline}

If $\beta < 1-\alpha$, the exponent becomes $\ell(1-3\alpha) +\frac32 -\frac{\alpha}{2}$. 
As $\ell$ becomes large, we require its coefficient in the power, i.e., $1-3\alpha$, to be zero or negative. 
As $\alpha > \frac{1}{3}$ the exponent is less than $1 +\frac{1-\alpha}{2}+ \epsilon$.  
As the main term will be on the order of $T^{\alpha+1 +\epsilon}$ it is convenient to write the exponent 
as $1 +\frac{1-\alpha}{2}+ \epsilon = 1+\alpha - \frac{3\alpha -1}{2}+ \epsilon$.

If $\beta\geq 1-\alpha$ the $\ell$ coefficient is $\beta-2\alpha$, 
and as we have the restriction $\alpha > \frac{\beta}{2}$ this is negative.  
Consequently the exponent is less than $1+\frac{\beta}{2} + \epsilon$.  
Again, as the main term will be on the order of $T^{\alpha+1 +\epsilon}$ it is convenient to write the exponent 
as $1+\alpha - (2\alpha -\beta)\frac{k+1}{2}$.
 
The continuous contribution $\OD_{\cont, \res}^+(s, t)$ is treated by again 
using Theorem 9.1 of \cite{IK04} instead of  Lemma~\ref{lem:sieve}.
This completes the proof of the proposition.
\qed

\subsection{Analysis of $\OD^+_{\Omega}(s, t)$}\label{sect:4.6}
Recalling \eqref{e:OD+Omega}, \eqref{e:ZOmega} and \eqref{e:M_Gamma}, 
for $\Re(s)=\frac{1}{2}$, moving the $v$ line of integration to $\Re(v)=1+\frac{k}{2}+\epsilon$, 
we pass over the poles of gamma functions and get 
\begin{equation}
\OD^+_{\Omega}(s, t)
= \OD^+_{\Omega, \intg}(s, t)+ \sum_{\ell=0}^{\frac{k}{2}} \OD^+_{\Omega, \res, 1}(s, t; \ell) 
+ \sum_{\ell=0}^{\frac{k}{2}} \OD^+_{\Omega, \res, 2}(s, t; \ell)
\end{equation}
where 
\begin{multline}\label{e:OD+Omegaint}
\OD^+_{\Omega, \intg}(s, t) 
=\frac{i^k (4\pi)^k (2\pi)^{2it}}{2}\zeta(2s-1)
\frac{4}{2\pi i}\int_{(\sigma_u)} \frac{h(u/i)u}{\cos(\pi u)}
\frac{1}{\Gamma\left(-u+it+\frac{k}{2}\right)\Gamma\left(u+it+\frac{k}{2}\right)}
\\ \times \frac{1}{2\pi i} \int_{(1+\frac{k}{2}+\epsilon)}
\frac{\Gamma\left(v-it\right) \Gamma\left(v+it\right) \Gamma\left(-v+u+\frac{k}{2}\right)}
{\Gamma\left(v+u+1-\frac{k}{2}\right)}
\frac{\Gamma\left(\frac{1}{2}-s+v\right)}{\Gamma\left(s-v+k-\frac{1}{2}\right)}
\\ \times 
\sum_{\pm} \frac{\pi^{s-\frac{1}{2}\mp it} \zeta(1\pm 2it)}{\Gamma\left(-\frac{1}{2}+s\mp it\right)} 
\frac{\Gamma\left(2s-1\mp it-v\right) \Gamma\left(1\pm it-v\right)}
{\Gamma\left(-\frac{1}{2}+s\mp it\right)\Gamma\left(\frac{3}{2}-s\pm it\right)}
\\ \times \sum_{\cuspa} \frac{\scrP_{\cuspa}(s, it; 1-s\pm it)}{\prod_{p\mid N}(1-p^{1-2s\pm 2it})} 
\left<E_{\cuspa}(*, 3/2-s\pm it), V_{f, g}\right>
\; dv \; du, 
\end{multline}
\begin{multline}
\OD^+_{\Omega, \res, 1}(s, t; \ell) 
=\frac{(-1)^{\ell}}{\ell!} \frac{i^k (4\pi)^k (2\pi)^{2it}}{2}\zeta(2s-1)\Gamma\left(2s-1+\ell\right)
\\ \times \sum_{\pm} 
\frac{4}{2\pi i}\int_{(\sigma_u)} \frac{h(u/i)u}{\cos(\pi u)}
\frac{1}{\Gamma\left(-u+it+\frac{k}{2}\right)\Gamma\left(u+it+\frac{k}{2}\right)}
\frac{\Gamma\left(1-2s\pm it+u+\frac{k}{2}-\ell\right)}{\Gamma\left(2s\mp it+u-\frac{k}{2}+\ell\right)} \; du
\\ \times 
\frac{\zeta(1\pm 2it)}{\pi^{\frac{1}{2}-s\pm it} \Gamma\left(-\frac{1}{2}+s\mp it\right)} 
\frac{\Gamma\left(-1+2s\mp 2it+\ell\right) \Gamma\left(2-2s\pm 2it-\ell\right)}
{\Gamma\left(-\frac{1}{2}+s\mp it\right)\Gamma\left(\frac{3}{2}-s\pm it\right)}
\frac{\Gamma\left(-\frac{1}{2}+s\mp it+\ell\right)}{\Gamma\left(\frac{1}{2}-s\pm it+k-\ell\right)}
\\ \times \sum_{\cuspa}
\frac{\scrP_{\cuspa}(s, it; 1-s\pm it)}{\prod_{p\mid N}(1-p^{1-2s\pm 2it})}
\left<E_{\cuspa}(*, 3/2-s\pm it), V_{f, g}\right>
\end{multline}
and 
\begin{multline}
\OD^+_{\Omega, \res, 2}(s, t; \ell) 
= (-1)^\ell\frac{i^k (4\pi)^k (2\pi)^{2it}}{2}\zeta(2s-1)
\\ \times \sum_{\pm}
\frac{4}{2\pi i}\int_{(\sigma_u)} \frac{h(u/i)u}{\cos(\pi u)}
\frac{1}{\Gamma\left(-u+it+\frac{k}{2}\right)\Gamma\left(u+it+\frac{k}{2}\right)}
\frac{\Gamma\left(u\mp it+\frac{k}{2}-\ell-1\right)}{\Gamma\left(1+u\pm it+1-\frac{k}{2}+\ell\right)}
\; du
\\ \times \frac{\Gamma\left(1\pm 2it+\ell\right) \Gamma\left(\frac{3}{2}-s\pm it+\ell\right)}
{\Gamma\left(s\mp it+k-\ell-\frac{3}{2}\right)}
\frac{\pi^{s-\frac{1}{2}\mp it} \zeta(1\mp 2it)}{\Gamma\left(-\frac{1}{2}+s\mp it\right)} 
\frac{\Gamma\left(2s-2\mp 2it-\ell\right)}
{\Gamma\left(-\frac{1}{2}+s\mp it\right)\Gamma\left(\frac{3}{2}-s\pm it\right)}
\\ \times \sum_{\cuspa} \frac{\scrP_{\cuspa}(s, it; 1-s\pm it)}{\prod_{p\mid N}(1-p^{1-2s\pm 2it})} \left<E_{\cuspa}(*, 3/2-s\pm it), V_{f, g}\right>
\end{multline}
By the reflection formula for gamma functions, we get
\begin{multline}\label{e:OD+Omegares1}
\OD^+_{\Omega, \res, 1}(s, t; \ell) 
=\frac{(-1)^{\ell}}{\ell!} \frac{(4\pi)^k (2\pi)^{2it}}{2}\zeta(2s-1)\Gamma\left(2s-1+\ell\right)
\\ \times \sum_{\pm} 
\frac{4}{2\pi i}\int_{(\sigma_u)} \frac{h(u/i)u}{\cos(\pi u)}
\frac{\Gamma\left(u-2s+1\pm it+\frac{k}{2}-\ell\right)\Gamma\left(-u-2s+1\pm it+\frac{k}{2}-\ell\right)}
{\Gamma\left(-u+it+\frac{k}{2}\right)\Gamma\left(u+it+\frac{k}{2}\right)}
\frac{\sin(\pi(u+2s\mp it))}{\pi}
\; du
\\ \times 
\frac{\pi^{s-\frac{1}{2}\mp it} \zeta(1\pm 2it)}{2\sin(\pi(s\mp it))}
\frac{\Gamma\left(s-\frac{1}{2}\mp it+\ell\right)}{\Gamma\left(s-\frac{1}{2}\mp it\right)\Gamma\left(-s+\frac{1}{2}\pm it+k-\ell\right)}
\\ \times \sum_{\cuspa}\frac{\scrP_{\cuspa}(s, it; 1-s\pm it)}{\prod_{p\mid N}(1-p^{1-2s\pm 2it})}
\left<E_{\cuspa}(*, 3/2-s\pm it), V_{f, g}\right>
\end{multline}
and 
\begin{multline}\label{e:OD+Omegares2}
\OD^+_{\Omega, \res, 2}(s, t; \ell) 
= (-1)^\ell\frac{i^k (4\pi)^k (2\pi)^{2it}}{2}\zeta(2s-1)
\\ \times \sum_{\pm}
\frac{4}{2\pi i}\int_{(\sigma_u)} \frac{h(u/i)u}{\cos(\pi u)}
\frac{\Gamma\left(u\mp it+\frac{k}{2}-\ell-1\right)\Gamma\left(-u\mp it+\frac{k}{2}-\ell-1\right)}
{\Gamma\left(u\mp it+\frac{k}{2}\right)\Gamma\left(-u\mp it+\frac{k}{2}\right)}
\frac{(-1)^{\frac{k}{2}-\ell} \sin(\pi(u+it))}{\pi}\; du
\\ \times 
\frac{\Gamma\left(1\pm 2it+\ell\right)\Gamma\left(2s-2\mp 2it-\ell\right)}
{\Gamma\left(s-\frac{1}{2}\mp it\right)\Gamma\left(-s+\frac{1}{2}\pm it+1\right)}
\frac{\Gamma\left(-s+\frac{1}{2}\pm it+1+\ell\right)}{\Gamma\left(s\mp it+k-\ell-\frac{3}{2}\right)}
\\ \times \frac{\pi^{s-\frac{1}{2}\mp it} \zeta(1\pm 2it)}{\Gamma\left(s-\frac{1}{2}\mp it\right)} 
\sum_{\cuspa} \frac{\scrP_{\cuspa}(s, it; 1-s\pm it)}{\prod_{p\mid N}(1-p^{1-2s\pm 2it})} 
\left<E_{\cuspa}(*, 3/2-s\pm it), V_{f, g}\right>.
\end{multline}

Note that, in $\OD^+_{\Omega, \intg}(s, t)$ \eqref{e:OD+Omegaint}, taking $s=\frac{1}{2}\pm it$, when $f=g$, 
the pole from $\left<E_{\cuspa}(*, 3/2-s\pm it), V_{f, g}\right>$ is canceled by the zero of 
$\frac{1}{\Gamma\left(s-\frac{1}{2}\mp it\right)}$ at $s=\frac{1}{2}\pm it$. 
This pole cancelation also occurs in $\OD^+_{\Omega, \res, 1}(s, t; \ell)$ for $\ell \geq 1$ and $\OD^+_{\Omega, \res, 2}(s, t; \ell)$ when $\ell \geq 0$. 

We let 
\begin{equation}\label{e:MOmega}
M^+_{\Omega}(s, t) = \OD^+_{\Omega, \res, 1} (s, t; 0). 
\end{equation}
This also implies that, when $f\neq g$, 
\begin{equation}
\OD^+_{\Omega}(1/2\pm it, t)=\OD^+_{\Omega, \res, 1}(1/2\pm it, t; 0). 
\end{equation}

This completes the proof of \eqref{e:E_OD+} in Proposition~\ref{prop:OD+_decomp}.

\subsection{Upper bounds for $\OD^+_{\Omega, \intg}(s, t)$, $\OD^+_{\Omega, \res, 1}(s, t; \ell)$ 
and $\OD^+_{\Omega, \res, 2}(s, t; \ell)$}\label{sect:4.7}
Referring to \eqref{e:OD+Omegaint}, \eqref{e:OD+Omegares1} and \eqref{e:OD+Omegares2}, we apply the same estimation techniques as before and state the corresponding upper bounds in the following proposition.
We omit the details.
\begin{proposition}\label{prop:OD+Omega_upper}
Taking $s=\frac{1}{2}-it'$ and $|t|=|t'|=T^{\beta}$ for $0\leq \beta<1$, 
\begin{equation}
\OD^+_{\Omega, \intg}(s, t) \ll 1,
\end{equation}
for $1\leq \ell\leq \frac{k}{2}$
\begin{equation}
\OD^+_{\Omega, \res, 1} (s, t; \ell)\ll 1
\end{equation}
and for $0\leq \ell\leq \frac{k}{2}$
\begin{equation}
\OD^+_{\Omega, \res, 2}(s, t; \ell) \ll 1.
\end{equation}
\end{proposition}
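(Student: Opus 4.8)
The plan is to exploit the structural simplicity of the three families $\OD^+_{\Omega,\intg}$, $\OD^+_{\Omega,\res,1}(s,t;\ell)$ for $\ell\ge1$, and $\OD^+_{\Omega,\res,2}(s,t;\ell)$ for $\ell\ge0$: unlike the terms treated in \S\ref{ss:OD+intg_upper}--\S\ref{sect:4.5}, these contain \emph{no spectral sum}. Each is a finite sum over the cusps $\cuspa$ of $\Gamma_0(N)$ (of which there are $\scrO_N(1)$) of the Eisenstein inner product $\left<E_{\cuspa}(*,3/2-s\pm it),V_{f,g}\right>$ times the Euler‐polynomial factor $\scrP_{\cuspa}(s,it;1-s\pm it)/\prod_{p\mid N}(1-p^{1-2s\pm 2it})$ times an explicit ratio of $\Gamma$‐ and $\zeta$‐factors, all integrated against $h(u/i)/\cos(\pi u)$ over the $u$‐line (and, for $\OD^+_{\Omega,\intg}$, also over a $v$‐line). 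So the only tools needed are contour shifting with Stirling's formula (as in the earlier sections, bearing in mind Remark~\ref{remark:stirling}), polynomial bounds for the Eisenstein ingredients, and a finite‐order pole/zero count at $s=\tfrac12\pm it$. As usual I set $s=\tfrac12-it'$ with $t'=\pm t$ and $|t|=T^\beta$, $\beta<1$, reducing to the two cases $s=\tfrac12\mp it$.

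First I would record the inputs. By Corollary~\ref{cor:scrP_Eis_pm} the factors $\scrP_{\cuspa}(s,it;1-s\pm it)/\prod_{p\mid N}(1-p^{1-2s\pm 2it})$ have the explicit closed forms listed there on $\Re(s)=\tfrac12$ and in particular are $\scrO_N(1)$ uniformly in $t$ (they also vanish for every cusp $\cuspa=\tfrac1{ca}$ with $a<N$ in the ``$+$'' case, trimming the sum). The map $w\mapsto\left<E_{\cuspa}(*,w),V_{f,g}\right>$ is meromorphic — pair the continued $E_{\cuspa}(z,w)$ against the rapidly decaying $V_{f,g}=f\overline g\,y^k$ — with its only pole in the relevant region a simple pole at $w=1$ whose residue is a constant times $\left<f,g\right>$, hence zero when $f\ne g$; away from $w=1$ it grows at most polynomially in $|\Im w|$. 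For each sign exactly one of $3/2-s\pm it$ equals $1$ when $s=\tfrac12\mp it$, and there the (at most simple) pole is killed by the zero of the accompanying $1/\Gamma(s-\tfrac12\mp it)$ — a double zero in \eqref{e:OD+Omegares2} — exactly as flagged in the text after \eqref{e:OD+Omegares2}; so after this cancellation the cusp‐sum contributes a bounded quantity (identically zero when $f\ne g$). The apparent pole of $\Gamma(2s-2\mp 2it-\ell)$ in \eqref{e:OD+Omegares2} is absorbed by those same $1/\Gamma$ zeros.

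Next, the contour integral. Shifting $\sigma_u$ outward as in \S\ref{ss:OD+intg_upper} (no poles are crossed), the factor $h(u/i)/\cos(\pi u)$ both localizes $\gamma=\Im u$ to $\bigl||\gamma|-T\bigr|\ll T^\alpha$ and supplies $e^{-\pi|\gamma|}$, matched by the $e^{+\pi|\gamma|}$ from the $\sin(\pi(u+\cdots))$ factor, so that — computing the $\pm$ combinations of $|\gamma\pm(\text{bounded})|$ with $|t|\ll|\gamma|\asymp T$ as in \S\ref{ss:OD+intg_upper} — the net exponential in $\gamma$ is $\scrO(1)$. What remains of the $\Gamma$‐ratio in $u$ gives, by Stirling in vertical strips, a power $|\gamma|^{-2\ell}$ in $\OD^+_{\Omega,\res,1}(s,t;\ell)$ (since $-2s+1=\pm2it$ has real part $0$, the four real parts combine to $-2\ell$) and $|\gamma|^{-2\ell-2}$ in $\OD^+_{\Omega,\res,2}(s,t;\ell)$; multiplying by the lone $u$ in the numerator and integrating over the window of length $T^\alpha$ near $\gamma=\pm T$ yields $\ll T^{1+\alpha-2\ell}\ll1$ for $\ell\ge1$ in the first family and $\ll T^{\alpha-1}\ll1$ (as $\alpha<1$) in the second; $\OD^+_{\Omega,\intg}$ is the same once the extra $v$‐integral, whose integrand decays via $\Gamma(v-it)\Gamma(v+it)/\Gamma(v+u+1-\tfrac k2)$ together with $\Gamma(2s-1\mp it-v)\Gamma(1\pm it-v)$, is carried out. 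Finally the product of the explicit $t$‐dependent factors — $\zeta(2s-1)=\zeta(\mp2it)$, $\Gamma(2s-1+\ell)$, $1/\Gamma(s-\tfrac12\mp it)$, $\zeta(1\pm2it)$, $1/\bigl(2\sin(\pi(s\mp it))\bigr)$, $1/\Gamma(-s+\tfrac12\pm it+k-\ell)$, etc. — has modulus $\ll T^\epsilon$: the exponential‐in‐$T^\beta$ growth of the $1/\Gamma$'s cancels the exponential decay of the $\Gamma$'s and $1/\sin$'s (precisely the ``no net exponential decay'' bookkeeping already done in \S\S\ref{ss:OD+intg_upper}--\ref{sect:4.5}), and the surviving polynomial‐in‐$t$ factors are swamped by the negative power of $T$ from the $u$‐integral; near $t=0$ everything is regular, e.g.\ $\zeta(2s-1)=\zeta(0)=-\tfrac12$. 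Assembling these estimates yields the three bounds.

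The main obstacle is entirely bookkeeping. In each of \eqref{e:OD+Omegaint}, \eqref{e:OD+Omegares1}, \eqref{e:OD+Omegares2} and for each sign one must verify (i) that the net power of the contour variable is $\le-2\ell$ so the integral is $\ll1$ on the stated range of $\ell$, and (ii) that the product of the roughly a dozen $t$‐dependent $\Gamma$/$\sin$/$\zeta$ factors carries no residual exponential growth in $T^\beta$; neither cancellation is automatic, both mirror but are more intricate than the exponent computations of \S\ref{ss:OD+intg_upper}, which is why the text states the result and omits the details. A secondary care point is the order of the pole/zero matching at $s=\tfrac12\pm it$ between $\left<E_{\cuspa}(*,1),V_{f,g}\right>$ and the $1/\Gamma$ factors — simple against simple in $\OD^+_{\Omega,\res,1}(s,t;\ell)$ for $\ell\ge1$, and the extra zero in $\OD^+_{\Omega,\res,2}$ — a match that degenerates only at $\ell=0$ in $\OD^+_{\Omega,\res,1}$, precisely the term excluded from the Proposition because it carries $M^+_{\Omega}(s,t)$.
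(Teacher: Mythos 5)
Your overall strategy is the one the paper intends (it omits the proof, saying only that ``the same estimation techniques'' apply): shift $\sigma_u$ bearing in mind Remark~\ref{remark:stirling}, use Corollary~\ref{cor:scrP_Eis_pm} to bound $\scrP_{\cuspa}(s,it;1-s\pm it)/\prod_{p\mid N}(1-p^{1-2s\pm 2it})$, invoke the pole/zero cancellation at $3/2-s\pm it=1$ flagged after \eqref{e:OD+Omegares2}, and your computation of the net polynomial decay $|\gamma|^{-2\ell}$ (resp.\ $|\gamma|^{-2\ell-2}$) in the $u$-integrand is correct. However, there is a genuine gap in your exponential bookkeeping in $t$. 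You assert that $w\mapsto \left<E_{\cuspa}(*,w),V_{f,g}\right>$ ``grows at most polynomially in $|\Im w|$'' away from $w=1$ and then treat the cusp sum as an $\scrO(1)$ bystander. In fact, unfolding gives $\left<E_{\cuspa}(*,3/2-s\pm it),V_{f,g}\right>=\frac{\Gamma\left(1/2-s\pm it+k\right)}{(4\pi)^{1/2-s\pm it+k}\,\zeta^{(N)}(3-2s\pm 2it)}L_{\cuspa}(3/2-s\pm it,f\times\bar g)$ (this identity is used explicitly in the proof of Lemma~\ref{lem:M+Omega}), so at $s=\tfrac12-it'$ the inner product decays like $e^{-\frac{\pi}{2}|t'\pm t|}$, i.e.\ as fast as $e^{-\pi|t|}$. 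That decay is not optional: it is exactly what closes the books.

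Concretely, take $\OD^+_{\Omega,\res,1}(1/2-it,t;\ell)$ with the $+$ sign. The factor $\sin(\pi(u+2s-it))/\cos(\pi u)=\sin(\pi(u+1-3it))/\cos(\pi u)$ has modulus $\asymp e^{\pi(|\gamma-3t|-|\gamma|)}$, which equals $e^{+3\pi|t|}$ on the half of the $u$-line where $\gamma=\Im u$ and $t$ have opposite signs; your statement that ``the net exponential in $\gamma$ is $\scrO(1)$'' is a statement about $\gamma$, not about $t$, and misses this. Meanwhile the explicit constants you list, $\Gamma(2s-1+\ell)=\Gamma(-2it+\ell)$, $1/\Gamma(2it+k-\ell)$, $1/\bigl(2\sin(\pi(s-it))\bigr)=1/\bigl(2\cosh(2\pi t)\bigr)$, $\zeta(-2it)$, $\zeta(1+2it)$, contribute a net $e^{-2\pi|t|}$ (not $T^{\epsilon}$ with ``no net exponential''). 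With a merely polynomially bounded inner product this leaves an uncancelled $e^{\pi|t|}=e^{\pi T^{\beta}}$, and the claimed bound $\ll 1$ fails for every $\beta>0$; the identical residual growth occurs near $\Im v=t$ in the $v$-integral of $\OD^+_{\Omega,\intg}$ and on half the $u$-line in $\OD^+_{\Omega,\res,2}$. Inserting the factor $e^{-\pi|t|}$ carried by $\Gamma(1/2-s\pm it+k)$ in the inner product restores the balance in each case, after which your polynomial estimates do give the stated bounds. The fix is therefore local, replace the polynomial-growth claim by the unfolded formula for $\left<E_{\cuspa}(*,w),V_{f,g}\right>$ and redo the exponent count, but as written the argument does not prove the proposition for any $t\neq 0$.
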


Note that we haven't included $\OD^+_{\Omega, \res, 1}(s, t; 0)$. 
This is another part of the main term, which we will discuss in the next section. 

\subsection{Analysis of $M^+_{\Omega}(s, t)$}\label{sect:main2}

Recalling \eqref{e:OD+Omegares1} and \eqref{e:MOmega}, 
for further clarity we consider $M^+_{\Omega}(s, t)$ on $\Re(s)=\frac{1}{2}$.

\begin{lemma}\label{lem:M+Omega}
For $\Re(s)=\frac{1}{2}$, we get
\begin{multline}\label{e:MOmega_explicit}
M^+_{\Omega}(s, t)
= \frac{1}{4}\zeta(2-2s)\frac{(2\pi)^{2s-1+2it}}{\cos\left(\pi\left(s+\frac{1}{2}\right)\right)},
\\ \times \bigg\{
(2\pi)^{2s-1-2it}\zeta(1+2it)
\frac{\cos(\pi(2s-it))}{\sin(\pi(s-it))}
\sum_{\cuspa}\frac{\scrP_{\cuspa}(s, it; 1-s+it)}{\prod_{p\mid N}(1-p^{1-2s+2it})}
\frac{L_\cuspa(3/2-s+it, f\times \bar{g})}{\zeta^{(N)}(3-2s+2it)}
H_0(-2s+1; h)
\\ +
(2\pi)^{2s-1+2it}\zeta(1-2it)
\frac{\cos(\pi(2s+it))}{\sin(\pi(s+it))}
\sum_{\cuspa} \frac{\scrP_{\cuspa}(s, it; 1-s-it)}{\prod_{p\mid N}(1-p^{1-2s-2it})}
\frac{L_\cuspa(3/2-s-it, f\times \bar{g})}{\zeta^{(N)}(3-2s-2it)}
H_0(-2s+1-2it; h)
\bigg\}.
\end{multline}
Here $H_0(*; h)$ is given in \eqref{e:H0}. 
\end{lemma}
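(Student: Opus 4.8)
The plan is to specialize the explicit formula \eqref{e:OD+Omegares1} to $\ell=0$, since by \eqref{e:MOmega} one has $M^+_\Omega(s,t)=\OD^+_{\Omega,\res,1}(s,t;0)$. At $\ell=0$ the ratio $\Gamma\!\left(s-\tfrac12\mp it+\ell\right)/\Gamma\!\left(s-\tfrac12\mp it\right)$ collapses to $1$, leaving only the factor $1/\Gamma\!\left(-s+\tfrac12\pm it+k\right)$ outside the $u$-integral. Two things then remain, for each choice of sign: evaluate in closed form the inner $u$-integral
\[
\frac{4}{2\pi i}\int_{(\sigma_u)}\frac{h(u/i)\,u}{\cos(\pi u)}\;\frac{\Gamma\!\left(u-2s+1\pm it+\tfrac k2\right)\Gamma\!\left(-u-2s+1\pm it+\tfrac k2\right)}{\Gamma\!\left(-u+it+\tfrac k2\right)\Gamma\!\left(u+it+\tfrac k2\right)}\;\frac{\sin\!\big(\pi(u+2s\mp it)\big)}{\pi}\;du ,
\]
and unfold the Rankin--Selberg inner product $\left<E_\cuspa(*,3/2-s\pm it),V_{f,g}\right>$.

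For the $u$-integral I would first move the contour from $\Re(u)=\sigma_u$, $1+\epsilon<\sigma_u<\tfrac32$, to $\Re(u)=0$. On $\Re(s)=\tfrac12$ with $k\ge 4$ the poles of $\Gamma\!\left(u-2s+1\pm it+\tfrac k2\right)$ lie at $\Re(u)\le-\tfrac k2<0$ and those of $\Gamma\!\left(-u-2s+1\pm it+\tfrac k2\right)$ at $\Re(u)\ge\tfrac k2>\sigma_u$, so the only candidate obstruction is the zero of $\cos(\pi u)$ at $u=\tfrac12$; it is harmless because $h(-i/2)=0$ by the fourth condition on $h$ (for $h=h_{T,\alpha}$ of \eqref{e:hdef} the holomorphy needed for the shift is immediate). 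On the line $u=ir$, $r\in\R$, I would use $\cos(i\pi r)=\cosh(\pi r)$ and $\sin\!\big(\pi(ir+2s\mp it)\big)=i\sinh(\pi r)\cos\!\big(\pi(2s\mp it)\big)+\cosh(\pi r)\sin\!\big(\pi(2s\mp it)\big)$ to write, on that line, $\sin\!\big(\pi(u+2s\mp it)\big)/\cos(\pi u)=i\tanh(\pi r)\cos\!\big(\pi(2s\mp it)\big)+\sin\!\big(\pi(2s\mp it)\big)$. The $\sin\!\big(\pi(2s\mp it)\big)$ summand multiplies an integrand that is odd in $r$ ($h$ is even, there is an extra factor $ir$, and the ratio of gamma functions is invariant under $r\mapsto -r$), so it integrates to $0$; the $\tanh$ summand, after the substitution $du=i\,dr$, is exactly the integral \eqref{e:H0} defining $H_0$, with the ratio of gamma functions matching that in \eqref{e:H0} under $ix=1-2s$ in the $+$ case and $ix=1-2s-2it$ in the $-$ case. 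Keeping track of constants, the $u$-integral equals $-2\cos\!\big(\pi(2s-it)\big)H_0(1-2s;h)$ in the $+$ case and $-2\cos\!\big(\pi(2s+it)\big)H_0(1-2s-2it;h)$ in the $-$ case; these are the two $H_0$-values appearing in \eqref{e:MOmega_explicit}.

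For the inner product, unfolding $E_\cuspa$ against $V_{f,g}=f\overline g\,y^k$ and invoking the Fourier expansions \eqref{e:fourier_fa}--\eqref{e:fourier_ga} together with the definition \eqref{e:Rankin-Selberg_fg_a} gives $\left<E_\cuspa(*,w),V_{f,g}\right>=\frac{\Gamma(w+k-1)}{(4\pi)^{w+k-1}}\,\frac{L_\cuspa(w,f\times\bar g)}{\zeta^{(N)}(2w)}$; with $w=3/2-s\pm it$ one has $2w=3-2s\pm 2it$ and $w+k-1=\tfrac12-s\pm it+k$, and the gamma factor $\Gamma(w+k-1)$ cancels the leftover $1/\Gamma\!\left(-s+\tfrac12\pm it+k\right)$ of \eqref{e:OD+Omegares1}. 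Substituting both computations into \eqref{e:OD+Omegares1} at $\ell=0$, what remains is elementary: the powers of $4\pi$, $2\pi$ and $\pi$ fuse to $(2\pi)^{2s-1\mp 2it}$ per term, and the prefactor $\zeta(2s-1)\Gamma(2s-1)$ is rewritten, via the functional equation of $\zeta$, as $\frac{(2\pi)^{2s-1}}{2\sin(\pi s)}\,\zeta(2-2s)$ --- this supplies the extra $(2\pi)^{2s-1}$ producing $(2\pi)^{4s-2}$ and $(2\pi)^{4s-2+4it}$, turns $\zeta(2s-1)$ into $\zeta(2-2s)$, and, together with $1/\cos\!\big(\pi(s+\tfrac12)\big)=-1/\sin(\pi s)$, yields the constant $\tfrac14$ and the overall sign of \eqref{e:MOmega_explicit}.

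The main obstacle is the constant-tracking in the second step: justifying the contour shift (the point being that $h(-i/2)=0$ is precisely what allows one to cross $u=\tfrac12$) and then carrying the various $i$'s, $\pi$'s and the $\tfrac{1}{2\pi i}$ through the substitution $u=ir$ and the $\tanh$/$\sin$ splitting without a sign slip. Once the $u$-integral has been identified with a scalar multiple of $H_0$, the final assembly into \eqref{e:MOmega_explicit} is a routine matter of the reflection and functional equations for $\Gamma$ and $\zeta$ and elementary exponent arithmetic.
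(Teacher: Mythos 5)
Your proposal is correct and follows essentially the same route as the paper: specialize \eqref{e:OD+Omegares1} to $\ell=0$, unfold $\left<E_\cuspa(*,3/2-s\pm it),V_{f,g}\right>$ into $L_\cuspa$, split $\sin(\pi(u+2s\mp it))$ so that one piece vanishes by parity (the paper argues this on the line $\Re(u)=\sigma_u$ using evenness of $h$ and analyticity, which is equivalent to your oddness-in-$r$ argument on the imaginary axis), identify the surviving $\tan(\pi u)$ piece with $-2\cos(\pi(2s\mp it))H_0$, and finish with the functional and reflection equations for $\zeta$ and $\Gamma$. Your constant-tracking, including the identity $\zeta(2s-1)\Gamma(2s-1)=\frac{(2\pi)^{2s-1}}{2\sin(\pi s)}\zeta(2-2s)$, agrees with the paper's \eqref{e:MOmega_explicit}.
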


\begin{proof}
Recall \eqref{e:OD+Omegares1} for $\ell=0$. 
By opening up the inner product and applying \eqref{e:Rankin-Selberg_fg_a}, 
\begin{equation}
\left<E_\cuspa(*, 3/2-s-it), V_{f, g}\right>
= \frac{\Gamma\left(\frac{1}{2}-s-it+k\right)}{(4\pi)^{\frac{1}{2}-s-it+k} \zeta^{(N)}(3-2s-2it)}
L_\cuspa(3/2-s-it, f\times \bar{g}). 
\end{equation}
Recalling the functional equation of the Riemann zeta function and by the Legendre duplication formula 
and the reflection formula for gamma functions, 
\begin{equation}
\zeta(2s-1)\Gamma(2s-1) 
= -\zeta(2-2s)(2\pi)^{2s-2} \frac{\pi}{\cos\left(\pi\left(s+\frac{1}{2}\right)\right)}, 
\end{equation}
we get
\begin{multline}
M^+_{\Omega}(s, t)
= - \frac{1}{4}\zeta(2-2s)\frac{(2\pi)^{2s-1+2it}}{\cos\left(\pi\left(s+\frac{1}{2}\right)\right)}
\\ \times \sum_{\pm}
\frac{(2\pi)^{2s-1\mp 2it}\zeta(1\pm 2it)}{2\sin(\pi(s\mp it))}
\sum_{\cuspa}\frac{\scrP_{\cuspa}(s, it; 1-s\pm it)}{\prod_{p\mid N}(1-p^{1-2s\pm 2it})}
\frac{L_\cuspa(3/2-s\pm it, f\times \bar{g})}{\zeta^{(N)}(3-2s\pm 2it)}
\\ \times 
\frac{4}{2\pi i}\int_{(\sigma_u)} \frac{h(u/i)u}{\cos(\pi u)}
\frac{\Gamma\left(u-2s+1\pm it+\frac{k}{2}\right)\Gamma\left(-u-2s+1\pm it+\frac{k}{2}\right)}
{\Gamma\left(-u+it+\frac{k}{2}\right)\Gamma\left(u+it+\frac{k}{2}\right)}
\\ \times \frac{\sin(\pi u)\cos(\pi(2s\mp it)) + \cos(\pi u) \sin(\pi(2s\mp it))}{\pi}
\; du
\end{multline}

When $\Re(s)=\frac{1}{2}$, we can get 
\begin{equation}
\frac{4}{2\pi i}\int_{(\sigma_u)} h(u/i)u
\frac{\Gamma\left(u-2s+1+it+\frac{k}{2}\right)\Gamma\left(-u-2s+1+it+\frac{k}{2}\right)}
{\Gamma\left(-u+it+\frac{k}{2}\right)\Gamma\left(u+it+\frac{k}{2}\right)}
\; du = 0, 
\end{equation}
since $\Gamma\left(\pm u-2s+1+it+\frac{k}{2}\right)$ has no poles in $|\Re(u)|< \sigma_u$, $1+\epsilon < \sigma_u <\frac{3}{2}$ 
and $h(u/i)=h(-u/i)$. 
We now move the $u$ line of integration in the remaining integrals. 
After changing the variable $u=ir$, we get \eqref{e:MOmega_explicit}. 
\end{proof}

\section{Analysis of $\OD^-$}\label{s:OD-}

For $\Re(s)$, $\Re(w)>1$, let $s'=s+w+\frac{k}{2}-1$ and define 
\begin{equation}\label{e:cM3}
\scrM^{(3)}_{f, g}(s, w; it)
= \frac{\Gamma\left(k+w-1\right)}{(4\pi)^{k+w-1}} 
\zeta^{(N)}(2s') \sum_{m=1}^\infty \sum_{n=m+1}^\infty \frac{a(n-m)\overline{b(n)} \sigma_{-2it}(m; N) m^{it}}
{m^{s+\frac{k-1}{2}} n^{w+k-1}}. 
\end{equation}
Recalling \eqref{e:def_ODpm} and \eqref{e:L-holo_first}, 
and changing the indexes in the series, for $\Re(s)$ sufficiently large, we get
\begin{multline}\label{e:OD-_cM3}
\OD^-(s, t)
=-\frac{(2\pi)^{2it} \cos(\pi it)}{\pi} 
\frac{4}{2\pi i} \int_{(\sigma_u)} \frac{h(u/i) u \tan(\pi u)}{\Gamma\left(u+it+\frac{k}{2}\right) \Gamma\left(-u+it+\frac{k}{2}\right)}
\\ \times \frac{1}{2\pi i} \int_{(\sigma_0)} \Gamma\left(u-v\right) \Gamma\left(-u-v\right) 
\Gamma\left(\frac{k}{2}-it+v\right) \Gamma\left(\frac{k}{2}+it+v\right)
\\ \times  
\frac{(4\pi)^{\frac{k}{2}+s-v-\frac{1}{2}}}{\Gamma\left(\frac{k}{2}+s-v-\frac{1}{2}\right)}
\scrM^{(3)}(v+1/2, s-v-(k-1)/2; it)
\; dv \; du.
\end{multline}
Here we assume that $1< \sigma_u < \frac{3}{2}$ and $-\frac{k}{2} < \sigma_0 < -\sigma_u$ 
so the poles of gamma functions in the $v$-integral are separated.

We now study the analytic properties of $\scrM^{(3)}_{f, g}(s, w; it)$ in the following section. 

\subsection{Shifted Dirichlet Series II}
Recalling \eqref{e:cM3}, 
\begin{equation}
\scrM^{(3)}_{f, g}(s, w; it)
= \frac{\Gamma\left(k+w-1\right)}{(4\pi)^{k+w-1}} 
\zeta^{(N)}(2s') \sum_{m=1}^\infty \frac{\sigma_{-2it}(m; N)m^{it}}{m^{s+\frac{k-1}{2}}} 
\sum_{n=m+1}^\infty \frac{a(n-m)\overline{b(n)}}{ n^{w+k-1}}, 
\end{equation}
where $s'=s+w+\frac{k}{2}-1$. 
The shifted inner sum over $n$ is essentially an inner product of $f$ and $g$ with a standard Poincar\'e series. 
In particular, for $m \geq 1$, if
\begin{equation}
P(z, w; m)
= \sum_{\gamma\in \Gamma_\infty \bsl \Gamma} (\Im(\gamma z))^w e^{2\pi i m \gamma z}, 
\end{equation}
then 
\begin{equation}
\left< P(*, w; m), V_{f, g}\right>
= \int_{\Gamma_0(N) \bsl \HH} P(z, w; m) y^k f(z) \overline{g(z)} \; \frac{dx\; dy}{y^2}
= \frac{\Gamma\left(k+w-1\right)}{(4\pi)^{k+w-1}}
\sum_{n=m+1}^\infty \frac{a(n-m) \overline{b(n)}}{n^{k+w-1}}.
\end{equation}
So we have 
\begin{equation}
\scrM^{(3)}_{f, g}(s, w; it)
= \zeta^{(N)}(2s') \sum_{m=1}^\infty \frac{\sigma_{-2it}(m; N)m^{it}}{m^{s+\frac{k-1}{2}}} 
\left<P(*, w; m), V_{f, g}\right>. 
\end{equation}

On the other hand, since $P(z, w; m)\in L^2(\Gamma_0(N)\bsl \HH)$, from its spectral expansion we obtain 
\begin{multline}
\left<P(*, w; m), V_{f, g}\right>
= \sum_{j\geq 1} \left<P(*, w; m), u_j\right> \left<u_j, V_{f, g}\right>
\\ + \sum_{\cuspa} \frac{1}{4\pi} \int_{-\infty}^\infty 
\left<P(*, w; m), E_{\cuspa}(*, 1/2+ir)\right>
\left<E_{\cuspa}(*, 1/2+ir), V_{f, g}\right>\; dr, 
\end{multline}
where 
\begin{equation}
\left<P(*, w; m), u_j\right> 
= \frac{\overline{\rho_j(m)}}{(2\pi m)^{w-\frac{1}{2}}}
\frac{\sqrt{\pi} 2^{\frac{1}{2}-w} \Gamma\left(w-\frac{1}{2}+ir_j\right) \Gamma\left(w-\frac{1}{2}-ir_j\right)}{\Gamma(w)}
\end{equation}
and
\begin{equation}
\left<P(*, w; m), E_{\cuspa}(*, 1/2+ir)\right>
= \frac{\overline{\tau_{\cuspa}\left(1/2+ir, m\right)}}{(2\pi m)^{w-\frac{1}{2}}}
\frac{\sqrt{\pi} 2^{\frac{1}{2}-w} \Gamma\left(w-\frac{1}{2}+ir\right) \Gamma\left(w-\frac{1}{2}-ir\right)}
{\Gamma(w)}.
\end{equation}

Therefore,
\begin{multline}\label{e:cM3_spec_def}
\scrM^{(3)}_{f, g}(s, w; it)
= \sum_{j\geq 1}
\frac{\sqrt{\pi} (4\pi)^{-w+\frac{1}{2}} 
\Gamma\left(w-\frac{1}{2}+ir_j\right) \Gamma\left(w-\frac{1}{2}-ir_j\right)}{\Gamma(w)}
\scrL(s', it; \overline{u_j})
\left<u_j, V_{f, g}\right>
\\ + \frac{1}{4\pi} \int_{-\infty}^\infty 
\frac{\sqrt{\pi} (4\pi)^{-w+\frac{1}{2}} \Gamma\left(w-\frac{1}{2}+ir\right) \Gamma\left(w-\frac{1}{2}-ir\right)}
{\Gamma(w)}
\sum_{\cuspa}\scrL_{\cuspa}(s', it; ir)
\left<E_{\cuspa}(*, 1/2+ir), V_{f, g}\right> \; dr.
\end{multline}
Here $\scrL(s, it; u_j)$ and $\scrL_{\cuspa}(s, it; ir)$ are given in \eqref{e:def_cL_uj} and \eqref{e:def_cL_cuspa} 
and their factorizations in Lemma~\ref{lem:scrL_fac_cusp} and Lemma~\ref{lem:scrL_fac_Eis} respectively. 
The series and the integral in \eqref{e:cM3_spec_def} are absolutely convergent.    
We begin with $\Re (s') >1$ but need to continue to $\Re (s') < 1$.   
In order to do this we set $z = ir$ and think of the integration as over the line $\Re (z) =0$.   
Setting $\Re (s')=  1+ \epsilon$, we bend the $z$ line to the right to pass over $s'$, obtaining a residue with a negative sign attached because of the direction the line moved.  
We then continue $s'$ to $\Re (s') = 1- \epsilon$, and then move the $z$ line back to $\Re (z) =0$, picking up another residue, this time with a positive sign.  
Finally we use the functional equation of the sum of the product of Eisenstein series over the cusps \eqref{e:series_Eis_cusosum_fe}. 
The end result is that when $\Re (s') = 1- \epsilon$ we have picked up an additional residual term, and we obtain
\begin{multline}\label{e:cM3_spec}
\scrM^{(3)}_{f, g}(s, w; it)
= \sum_{j\geq 1}
\frac{\sqrt{\pi} (4\pi)^{-w+\frac{1}{2}} 
\Gamma\left(w-\frac{1}{2}+ir_j\right) \Gamma\left(w-\frac{1}{2}-ir_j\right)}{\Gamma(w)}
\scrL(s', it; \overline{u_j})
\left<u_j, V_{f, g}\right>
\\ + \frac{1}{4\pi} \int_{-\infty}^\infty 
\frac{\sqrt{\pi} (4\pi)^{-w+\frac{1}{2}} \Gamma\left(w-\frac{1}{2}+ir\right) \Gamma\left(w-\frac{1}{2}-ir\right)}
{\Gamma(w)}
\sum_{\cuspa}\scrL_{\cuspa}(s', it; ir)
\left<E_{\cuspa}(*, 1/2+ir), V_{f, g}\right> \; dr
\\+\delta_{1/2 \leq \Re(s') < 1} \zeta(-1+2s')
\frac{\sqrt{\pi} (4\pi)^{-w+\frac{1}{2}}}{\Gamma\left(w\right)}
\bigg\{
\frac{\Gamma\left(-s'+\frac{1}{2}+it+w\right) \Gamma\left(s'-\frac{3}{2}-it+w\right)}{\Gamma\left(s'-\frac{1}{2}-it\right)}
\\ \times \pi^{s'-\frac{1}{2}-it} \zeta(1+2it)
\sum_{\cuspa} \frac{\scrP_{\cuspa}(s', it; 1-s+it)}{\prod_{p\mid N}(1-p^{1-2s'+2it})} 
\left<E_{\cuspa}(*, 3/2-s'+it), V_{f, g}\right> 
\\ + \frac{\Gamma\left(-s'+\frac{1}{2}-it+w\right) \Gamma\left(s'-\frac{3}{2}+it+w\right)}{\Gamma\left(s'-\frac{1}{2}+it\right)}
\\ \times \pi^{s'-\frac{1}{2}+it} \zeta(1-2it)
\sum_{\cuspa} \frac{\scrP_{\cuspa}(s', it; 1-s-it)}{\prod_{p\mid N}(1-p^{1-2s'-2it})} 
\left<E_{\cuspa}(*, 3/2-s'-it), V_{f, g}\right> 
\bigg\}.
\end{multline}
Now we have the following proposition. 
\begin{proposition}\label{prop:cM3}
For $\Re(w) > 1/2$, the function $\scrM^{(3)}_{f, g}(s, w; it)$ is analytic and it has a meromorphic continuation to all $s, w\in \C$.
\end{proposition}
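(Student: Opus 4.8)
The statement to prove is Proposition~\ref{prop:cM3}: for $\Re(w)>1/2$ the function $\scrM^{(3)}_{f,g}(s,w;it)$ is analytic, and it continues meromorphically to all $(s,w)\in\C^2$. The natural strategy is to read off everything from the spectral identity \eqref{e:cM3_spec}, which is already established in the excerpt for $\Re(s')=1-\epsilon$ (where $s'=s+w+\tfrac{k}{2}-1$), together with the initial absolutely convergent spectral expansion \eqref{e:cM3_spec_def} valid for $\Re(s')>1$. So the proof is really a matter of checking convergence and locating poles in each of the three pieces on the right of \eqref{e:cM3_spec}, using the known analytic behaviour of $\scrL(s',it;\overline{u_j})$, $\scrL_\cuspa(s',it;ir)$, and the triple-product inner products.

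First I would treat the \emph{cuspidal sum}. Each term carries the factor $\Gamma(w-\tfrac12+ir_j)\Gamma(w-\tfrac12-ir_j)/\Gamma(w)$, which by Stirling decays like $e^{-\pi|r_j|}|r_j|^{2\Re(w)-2}$ (times polynomial factors in $w$), exactly compensating the exponential growth $e^{\pi|r_j|/2}$ in $\langle u_j, V_{f,g}\rangle$ controlled by the Bernstein--Reznikov bound \eqref{e:innerbound}, while $\scrL(s',it;\overline{u_j})$ grows only polynomially in $r_j$ (by the factorization in Lemma~\ref{lem:scrL_fac_cusp}, \eqref{weight}, and convexity). Hence for $\Re(w)>1/2$ the sum converges locally uniformly in $(s,w)$ away from the poles of the $\Gamma$-factors, giving analyticity there; the continuation to all $w$ comes from the meromorphy of the $\Gamma$-factors (poles at $w=\tfrac12\pm ir_j-\ell$), and the continuation in $s$ is trivial since $s'$ enters only through $\scrL(s',it;\overline{u_j})$, which by its factorization is entire in $s'$ up to the zeta factors. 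I would then do the \emph{Eisenstein integral} piece by the same Stirling estimate plus the polynomial bound for $\scrL_\cuspa(s',it;ir)$ (from Lemma~\ref{lem:scrL_fac_Eis}, noting the $\zeta(1\pm 2ir)$ in the denominator is harmless on $\Re(z)=0$ away from $r=0$, and that possible poles of $\scrL_\cuspa$ in the $s'$-variable at $s'=1\pm z\pm it$ were precisely what was removed when we bent the contour, so they reappear only as the explicit residual term).

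For the \emph{residual term} (the $\delta_{1/2\le\Re(s')<1}$ piece) I would simply observe it is a finite expression: an explicit product of $\Gamma$-factors, zeta values $\zeta(-1+2s')$, $\zeta(1\pm2it)$, Euler polynomials $\scrP_\cuspa$, and the inner products $\langle E_\cuspa(*,3/2-s'\pm it),V_{f,g}\rangle$, each of which is meromorphic in $(s,w)$ (the inner product is a ratio of $\Gamma$-factors and zeta values by the Rankin--Selberg unfolding used in the proof of Lemma~\ref{lem:M+Omega}). Its only singularities are the obvious ones coming from these factors, and the indicator $\delta_{1/2\le\Re(s')<1}$ is consistent with the two contour moves that produced it, so it patches correctly across $\Re(s')=1$ and $\Re(s')=1/2$ (a standard Cauchy-residue bookkeeping check, using the functional equation \eqref{e:series_Eis_cusosum_fe} as already indicated). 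Putting the three together yields a meromorphic function on all of $\C^2$ agreeing with the original Dirichlet series on $\Re(s')>1$, which is the claim.

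\textbf{Main obstacle.} The only real work is the uniform convergence of the cuspidal sum and the Eisenstein integral: one must combine the Stirling decay of the quotient $\Gamma(w-\tfrac12+ir_j)\Gamma(w-\tfrac12-ir_j)/\Gamma(w)$ with \eqref{e:innerbound} and a polynomial (convexity) bound on $\scrL(s',it;\overline{u_j})$ and $\scrL_\cuspa(s',it;ir)$, and verify that for $\Re(w)>1/2$ the net exponent of $|r_j|$ (resp.\ $|r|$) is strictly less than $-1$ after summation/integration, so that no boundary case at $\Re(w)=1/2$ intrudes. This is entirely analogous to \cite[Proposition~7.1]{HHR} and \cite[\S7]{HHR}, so I expect it to be routine; accordingly, as with the earlier propositions, I would state it and refer to \cite{HHR} for the details rather than reproducing the estimates.
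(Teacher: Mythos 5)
Your proposal is correct and follows essentially the same route as the paper: the paper establishes Proposition~\ref{prop:cM3} precisely by exhibiting the absolutely convergent spectral expansion \eqref{e:cM3_spec_def} (unfolding against the Poincar\'e series, with the $\Gamma$-quotient's Stirling decay beating the $e^{\pi|r_j|/2}$ growth of the triple products) and then bending the $z$-contour to cross $\Re(s')=1$, which produces the residual term in \eqref{e:cM3_spec}. Your verification of each piece, and your deferral of the uniform-convergence details to \cite[Proposition~7.1]{HHR}, matches the paper's (essentially omitted) argument.
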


\subsection{The meromorphic continuation of $\OD^-(s, t)$}
Recall \eqref{e:OD-_cM3}. 
The choice of $-\frac{k}{2}< \sigma_0< \min\{-1+2\Re(s)-\frac{k}{2}, -\sigma_u\}$ with $1< \sigma_u < \frac{3}{2}$, 
is made as the vertical line $\Re(v)=\sigma_0$ in \eqref{e:OD-_cM3} 
passes between the poles of the gamma functions in the polar piece of 
$\scrM^{(3)}(v+1/2, s-v-(k-1)/2; it)$ (see \eqref{e:cM3_spec}).
Note that the interval is non-empty when $\Re(s)>\frac{1}{2}$, and $k \ge 4$.  
We are assuming now that  $\Re(s)=\frac{1}{2}+\epsilon$ for some $\epsilon>0$.   
We also choose $\sigma_0 = -\frac{k}{2}+\epsilon$. 
By Proposition~\ref{prop:cM3}, for $s' = s$ and $\Re(s-v-\frac{k-1}{2}) > \frac{1}{2}$, 
the function $\scrM^{(3)}(v+1/2, s-v-(k-1)/2; it)$ is analytic 
and it has a meromorphic continuation to all $s, v\in \C$. 
By \eqref{e:cM3_spec}, we have 
\begin{equation}\label{e:OD-_decomp}
\OD^-(s, t) = \OD_0^-(s, t)+\OD_{\Omega}^-(s, t),
\end{equation}
where
\begin{multline}\label{e:OD-_0}
\OD^-_0(s, t) 
= 
%
%
-\frac{(2\pi)^{2it}\cos(\pi it)}{\pi} 
\frac{4}{2\pi i} \int_{(\sigma_u)} \frac{h(u/i) u \tan(\pi u)}{\Gamma\left(u+it+\frac{k}{2}\right) \Gamma\left(-u+it+\frac{k}{2}\right)}
\\ \times \frac{1}{2\pi i} \int_{(\sigma_0)} \Gamma\left(u-v\right) \Gamma\left(-u-v\right) 
\Gamma\left(\frac{k}{2}-it+v\right) \Gamma\left(\frac{k}{2}+it+v\right)
\\ \times \bigg\{
\sum_{j\geq 1}
\frac{\sqrt{\pi} (4\pi)^{k-\frac{1}{2}} 
\Gamma\left(s-v-\frac{k}{2}+ir_j\right) \Gamma\left(s-v-\frac{k}{2}-ir_j\right)}
{\Gamma\left(s-v+\frac{k}{2}-\frac{1}{2}\right)\Gamma\left(s-v-\frac{k}{2}+\frac{1}{2}\right)}
\scrL(s, it; \overline{u_j}) \left<u_j, V_{f, g}\right>
\\ + \frac{1}{4\pi} \int_{-\infty}^\infty 
\frac{\sqrt{\pi} (4\pi)^{k-\frac{1}{2}} \Gamma\left(s-v-\frac{k}{2}+ir\right) \Gamma\left(s-v-\frac{k}{2}-ir\right)}
{\Gamma\left(s-v+\frac{k}{2}-\frac{1}{2}\right)\Gamma\left(s-v-\frac{k}{2}+\frac{1}{2}\right)}
\sum_{\cuspa}\scrL_{\cuspa}(s, it; ir) \left<E_{\cuspa}(*, 1/2+ir), V_{f, g}\right> \; dr
\bigg\} \; dv \; du
\end{multline}
and 
\begin{multline}
\OD_{\Omega}^-(s, t) 
=
- \zeta(-1+2s)
\frac{(2\pi)^{2it}\cos(\pi it)}{\pi} 
\frac{4}{2\pi i} \int_{(\sigma_u)} \frac{h(u/i) u \tan(\pi u)}{\Gamma\left(u+it+\frac{k}{2}\right) \Gamma\left(-u+it+\frac{k}{2}\right)}
\\ \times \frac{1}{2\pi i} \int_{(\sigma_0)} \Gamma\left(u-v\right) \Gamma\left(-u-v\right) 
\frac{\sqrt{\pi} (4\pi)^{k-\frac{1}{2}}\Gamma\left(\frac{k}{2}-it+v\right) \Gamma\left(\frac{k}{2}+it+v\right)}
{\Gamma\left(s-v+\frac{k}{2}-\frac{1}{2}\right)\Gamma\left(s-v-\frac{k}{2}+\frac{1}{2}\right)}
\\ \times \sum_{\pm}
\frac{\Gamma\left(-v\pm it+1-\frac{k}{2}\right) \Gamma\left(-v+2s-1\mp it-\frac{k}{2}\right)}
{\Gamma\left(s-\frac{1}{2}\mp it\right)}
\\ \times \pi^{s-\frac{1}{2}\mp it} \zeta(1\pm 2it)
\sum_{\cuspa} \frac{\scrP_{\cuspa}(s, it; 1-s\pm it)}{\prod_{p\mid N}(1-p^{1-2s\pm 2it})} 
\left<E_{\cuspa}(*, 3/2-s\pm it), V_{f, g}\right> 
\; dv\; du.
\end{multline}

In order to continue $s$ to $\Re(s)=1/2$, we first consider the $v$-integral in $\OD_{\Omega}^-(s, t)$.  
We  move the contour from $\Re(v)=-\frac{k}{2}+\epsilon$ to $\Re(v) = -\frac{k}{2}+3\epsilon$, 
passing over the pole of $\Gamma\left(-v+2s-1\pm it-\frac{k}{2}\right)$ 
at $-v+2s-1\pm it -\frac{k}{2}=0$.
This gives us
\begin{equation}\label{e:OD-Omega_decomp}
\OD_{\Omega}^-(s, t) 
= \OD_{\Omega, \res}^-(s, t) 
+ \OD_{\Omega, \intg}^-(s, t), 
\end{equation}
where 
\begin{multline}\label{e:OD-_Omegares}
\OD_{\Omega, \res}^-(s, t) 
= -\zeta(-1+2s)
\frac{(2\pi)^{2it}\cos(\pi it)}{\pi} 
\\ \times \sum_{\pm}
\frac{4}{2\pi i} \int_{(\sigma_u)} h(u/i) u \tan(\pi u)
\frac{\Gamma\left(u-2s+1\pm it+\frac{k}{2}\right) \Gamma\left(-u-2s+1\pm it+\frac{k}{2}\right)}
{\Gamma\left(u+it+\frac{k}{2}\right) \Gamma\left(-u+it+\frac{k}{2}\right)}\; du
\\ \times \frac{\sqrt{\pi} (4\pi)^{k-\frac{1}{2}}\Gamma\left(2s-1\mp 2it\right) \Gamma\left(2s-1\right)}
{\Gamma\left(-s\pm it+\frac{1}{2}+k\right)\Gamma\left(-s\pm it+\frac{3}{2}\right)}
\frac{\Gamma\left(-2s+2\pm 2it\right)}{\Gamma\left(s-\frac{1}{2}\mp it\right)}
\\ \times \pi^{s-\frac{1}{2}\mp it} \zeta(1\pm 2it)
\sum_{\cuspa} \frac{\scrP_{\cuspa}(s, it; 1-s\pm it)}{\prod_{p\mid N}(1-p^{1-2s\pm 2it})} 
\left<E_{\cuspa}(*, 3/2-s\pm it), V_{f, g}\right> 
\end{multline}
and 
\begin{multline}\label{e:OD-_Omegaint}
\OD_{\Omega, \intg}^-(s, t) 
= -\zeta(-1+2s)
\frac{(2\pi)^{2it}\cos(\pi it)(4\pi)^{k-\frac{1}{2}}}{\sqrt{\pi}}
\frac{4}{2\pi i} \int_{(\sigma_u)} \frac{h(u/i) u \tan(\pi u)}{\Gamma\left(u+it+\frac{k}{2}\right) \Gamma\left(-u+it+\frac{k}{2}\right)}
\\ \times \sum_{\pm}
\frac{1}{2\pi i} \int_{(-\frac{k}{2}+3\epsilon)} \Gamma\left(u-v\right) \Gamma\left(-u-v\right) 
\frac{\Gamma\left(\frac{k}{2}-it+v\right) \Gamma\left(\frac{k}{2}+it+v\right)}
{\Gamma\left(s-v+\frac{k}{2}-\frac{1}{2}\right)\Gamma\left(s-v-\frac{k}{2}+\frac{1}{2}\right)}
\\ \times 
\frac{\Gamma\left(-v\pm it+1-\frac{k}{2}\right) \Gamma\left(-v+2s-1\mp it-\frac{k}{2}\right)}
{\Gamma\left(s-\frac{1}{2}\mp it\right)}\; dv\; du
\\ \times \pi^{s-\frac{1}{2}\mp it} \zeta(1\pm 2it)
\sum_{\cuspa} \frac{\scrP_{\cuspa}(s, it; 1-s\pm it)}{\prod_{p\mid N}(1-p^{1-2s\pm 2it})} 
\left<E_{\cuspa}(*, 3/2-s\pm it), V_{f, g}\right>.
\end{multline}
We then continue $s$ back to $\Re (s) =\frac{1}{2}$, passing through no polar lines, 
and then move the $v$ line back to $\sigma_v = -\frac{k}{2} + \epsilon$, again passing over no poles.

Let 
\begin{equation}\label{e:M-Omega_def}
M^-_{\Omega}(s, t) = \OD^-_{\Omega, \res}(s, t)
\end{equation}
and
\begin{equation}\label{e:scrE-}
\scrE^-(s, t) = \OD^-_0(s, t) + \OD^-_{\Omega, \intg}(s, t), 
\end{equation}
where $\OD^-_0(s, t)$ and $\OD^-_{\Omega, \intg}(s, t)$ are given in \eqref{e:OD-_0} and \eqref{e:OD-_Omegaint} respectively.  
Then the above discussion has proved the following proposition:
\begin{proposition}
On $\Re(s)=\frac{1}{2}$, we have 
\begin{equation}\label{e:OD-_mainerror}
\OD^-(s, t) = M^-_{\Omega}(s, t) + \scrE^-(s, t). 
\end{equation}
\end{proposition}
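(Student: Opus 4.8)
The plan is to assemble the proposition from the analytic-continuation steps already carried out in \eqref{e:OD-_cM3}--\eqref{e:scrE-}; the statement is essentially a bookkeeping summary, so the proof consists of justifying each contour move and collecting the surviving pieces.

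First I would start from the absolutely convergent double integral representation \eqref{e:OD-_cM3} of $\OD^-(s,t)$, valid for $\Re(s)$ large, with $1<\sigma_u<\frac32$ and $-\frac k2<\sigma_0<-\sigma_u$ so that the poles of the gamma factors in the $v$-integral are separated. Into this I substitute the spectral expansion of $\scrM^{(3)}(v+1/2,\,s-v-(k-1)/2;\,it)$ coming from Proposition~\ref{prop:cM3}, i.e.\ \eqref{e:cM3_spec}: the cuspidal sum, the continuous integral over the cusps, and --- once $\Re(s')<1$ --- the residual term obtained by bending the $z=ir$ line to the right across $z=s'-1$ and back, which after applying the Eisenstein functional equation \eqref{e:series_Eis_cusosum_fe} produces the symmetric $\pm$ sum over cusps $\cuspa$. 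This yields the decomposition $\OD^-(s,t)=\OD^-_0(s,t)+\OD^-_\Omega(s,t)$ of \eqref{e:OD-_decomp}, initially for $\Re(s)=\frac12+\epsilon$, with the choice $\sigma_0=-\frac k2+\epsilon$; this is legitimate precisely because the interval $-\frac k2<\sigma_0<\min\{-1+2\Re(s)-\frac k2,\,-\sigma_u\}$ is nonempty when $\Re(s)>\frac12$ and $k\ge4$, so that the line $\Re(v)=\sigma_0$ passes between the poles of the polar piece of $\scrM^{(3)}$.

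Next I would continue to $\Re(s)=\frac12$. In $\OD^-_\Omega(s,t)$ I move the $v$-contour from $\Re(v)=-\frac k2+\epsilon$ to $\Re(v)=-\frac k2+3\epsilon$, crossing only the pole of $\Gamma\!\left(-v+2s-1\pm it-\frac k2\right)$ at $v=2s-1\pm it-\frac k2$; evaluating the residue gives $\OD^-_{\Omega,\res}(s,t)$ of \eqref{e:OD-_Omegares} and the leftover integral is $\OD^-_{\Omega,\intg}(s,t)$ of \eqref{e:OD-_Omegaint}, so that $\OD^-_\Omega=\OD^-_{\Omega,\res}+\OD^-_{\Omega,\intg}$ as in \eqref{e:OD-Omega_decomp}. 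With the $v$-line held at $-\frac k2+3\epsilon$ one checks that $s\mapsto\OD^-_{\Omega,\res}(s,t)$ and $s\mapsto\OD^-_{\Omega,\intg}(s,t)$, and also $\OD^-_0(s,t)$ with $\sigma_0=-\frac k2+\epsilon$, extend holomorphically as $\Re(s)$ decreases to $\frac12$ --- no polar line of the gamma quotients is met in this strip --- after which the $v$-line in the $\intg$ and $0$ terms may be returned to $\sigma_v=-\frac k2+\epsilon$ without crossing further poles. Setting $M^-_\Omega(s,t)=\OD^-_{\Omega,\res}(s,t)$ as in \eqref{e:M-Omega_def} and $\scrE^-(s,t)=\OD^-_0(s,t)+\OD^-_{\Omega,\intg}(s,t)$ as in \eqref{e:scrE-} yields \eqref{e:OD-_mainerror}.

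The main obstacle is the contour bookkeeping: one must verify that the narrow corridor between the poles of $\Gamma\!\left(\frac k2\pm it+v\right)$ (and the other gamma factors) and those of the polar part of $\scrM^{(3)}$ does not close up as $\Re(s)\to\frac12$, which is exactly where the hypothesis $k\ge4$ and the precise interval for $\sigma_0$ enter, and that the residue extracted in the $v$-move is the only pole crossed. Since the whole argument is parallel to the $\OD^+$ analysis of \S\ref{s:OD+} and to \cite[\S7]{HHR}, the remaining verifications --- absolute convergence of the spectral expansion \eqref{e:cM3_spec}, noted in the text, and holomorphy of the gamma quotients in the relevant vertical strips via Stirling's formula --- are routine.
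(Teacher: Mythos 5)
Your proposal is correct and follows essentially the same route as the paper: the proposition is indeed proved there by exactly the chain you describe — substituting the spectral expansion \eqref{e:cM3_spec} into \eqref{e:OD-_cM3} at $\Re(s)=\tfrac12+\epsilon$ with $\sigma_0=-\tfrac k2+\epsilon$ to get \eqref{e:OD-_decomp}, shifting the $v$-line to $-\tfrac k2+3\epsilon$ to extract the single residue \eqref{e:OD-_Omegares}, continuing $s$ to $\Re(s)=\tfrac12$ past no polar lines, and returning the $v$-line. Your added remarks on why the corridor for $\sigma_0$ stays open (the hypothesis $k\ge 4$) and on the origin of the residual term in \eqref{e:cM3_spec} match the paper's own justifications.
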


\subsection{An upper bound for $\OD^-_0(s, t)$ and  $\OD_{\Omega, \intg}^-(s, t)$}
Recall we have set $s = \frac{1}{2} -it'$, with $t' = \pm t$, $|t| = T^\beta$, with $0<\beta <1$
We choose $h=h_{T, \alpha}$ as in \eqref{e:hdef} with $\frac{1}{3} < \alpha < \frac{2}{3}$.
The object of this section is to prove

\begin{proposition}\label{prop:OD-up}
If $t' = \pm t$ and $0<\beta \le 1-\alpha$, , then, 
\begin{equation}\label{e:OD-upper_0}
\OD^-_0(s, t) \ll  T^{1+\alpha - (3\alpha-1)\frac{k+1}{2}+ \epsilon}. 
\end{equation}
If $t' = t$ and $1-\alpha < \beta < 2\alpha$, then
\begin{equation}\label{e:OD-upper_1}
\OD^-_0(s, t) \ll T^{ 1+ \alpha - (2\alpha-\beta)\frac{k+1}{2}+ \epsilon}.
\end{equation}
If $t' = -t$ and $1-\alpha < \beta < \frac{\alpha+1}{2}$, and $\alpha = 2\beta-1+ \delta< \frac{2}{3}$, $\delta>0$, then
\begin{equation}\label{e:OD-upper_2}
\OD^-_0(s, t) \ll T^{1+\alpha -(1-3\frac{\beta}{2}+\delta \frac{k}{2})+\epsilon}.
\end{equation}
If $t' = \pm t$ and $\beta \le 1-\alpha$,  then
\begin{equation}
\OD_{\Omega, \intg}^-(s, t) \ll T^\epsilon.
\end{equation}
If $t' = \pm t$ and $1-\alpha < \beta < 1$, then
\begin{equation}
\OD_{\Omega, \intg}^-(s, t) \ll T^{\alpha +\epsilon}.
\end{equation}

\end{proposition}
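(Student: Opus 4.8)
The plan is to bound both families of terms by the same apparatus already applied to $\OD^+_{\cusp,\intg}$, $\OD^+_{\cont,\intg}$ in \S\ref{ss:OD+intg_upper} and to $\OD^+_{\cusp,\res}$, $\OD^+_{\cont,\res}$ in \S\ref{sect:4.5}: extract the exponential part of the gamma factors by Stirling's formula, locate the thin region of the contours on which there is no exponential cancellation, bound the polynomial part of the integrand, shift the $u$-line to $\sigma_u=KT^\alpha$ with $K\gg 1$ (justified by Remark~\ref{remark:stirling}, since $\alpha<\frac23$), and then estimate the spectral sum by Lemma~\ref{lem:sieve} in the cuspidal case and by \cite[Theorem~9.1]{IK04} together with the Eisenstein form of \eqref{e:innerbound} in the continuous case.

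\textbf{The term $\OD^-_0(s,t)$.} Write $u=\sigma_u+i\gamma$ and $v=\sigma_0+ir$ with $\sigma_0=-\frac k2+\epsilon$; recall that $h=h_{T,\alpha}$ effectively confines $\gamma$ to $||\gamma|-T|\ll T^\alpha$, and that $s=\frac12-it'$ with $|t|=|t'|=T^\beta$, $\beta<1$. First I would apply Stirling to the combination
\[
\frac{\tan(\pi u)}{\Gamma(u+it+\frac k2)\Gamma(-u+it+\frac k2)}\,\Gamma(u-v)\Gamma(-u-v)\,\Gamma(\tfrac k2-it+v)\Gamma(\tfrac k2+it+v)\,\frac{\Gamma(s-v-\frac k2+ir_j)\Gamma(s-v-\frac k2-ir_j)}{\Gamma(s-v+\frac k2-\frac12)\Gamma(s-v-\frac k2+\frac12)}
\]
appearing in \eqref{e:OD-_0}, write out the resulting exponential factor, and, exactly as in the $\OD^+$ analysis, take the worst case of the signs of $r$, $\gamma$, $r_j$. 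Since $|t|<|\gamma|\asymp T$ off the exponential-decay region, this should isolate a region with no exponential decay on which $|\gamma|\asymp T$, the relevant signs are aligned, $|r_j|\ll|\gamma|$, and, after the $\sigma_u$-shift has produced a factor $e^{-c|r_j|/T^{1-\alpha}}$, in fact $|r_j|\ll T^{1-\alpha}$ up to negligible error, with $|r|$ of size $T^{\max(\beta,1-\alpha)}$. I would then bound the non-exponential part by the polynomial part of Stirling (a product of powers of $|\gamma|$, $|\gamma\pm r|$, $|r\pm t|$, $|r_j\pm t|$), apply Lemma~\ref{lem:sieve} with $T^C=|r+t'|$, and use $\scrP_d(1/2\pm it,it;\overline{u_j})\ll 1$, precisely as in \S\ref{sect:4.5}. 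Performing the $u$-integration (a factor $T^\alpha$) and collecting the exponent of $T$ as a function of $\sigma_0$, the three regimes in the statement --- $0<\beta\le 1-\alpha$; $t'=t$ with $1-\alpha<\beta<2\alpha$; and $t'=-t$ with $1-\alpha<\beta<\frac{\alpha+1}{2}$ and $\alpha=2\beta-1+\delta<\frac23$ --- turn out to be exactly the cases in which the optimum takes the three stated forms \eqref{e:OD-upper_0}, \eqref{e:OD-upper_1}, \eqref{e:OD-upper_2}. The weight $k$ now enters more strongly than in Proposition~\ref{prop:OD+res_upper}, already when $\beta\le 1-\alpha$, through the $k$-heavy gamma factors $\Gamma(\tfrac k2\pm it+v)$ and the shift $\sigma_0=-\frac k2+\epsilon$. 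The continuous part is treated identically, with \cite[Theorem~9.1]{IK04} in place of Lemma~\ref{lem:sieve}.

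\textbf{The term $\OD^-_{\Omega,\intg}(s,t)$.} This is shorter: \eqref{e:OD-_Omegaint} is a finite sum over cusps, each summand carrying the factor $\left<E_{\cuspa}(*,3/2-s\pm it),V_{f,g}\right>\ll T^\epsilon$, the Euler polynomial $\scrP_{\cuspa}(s,it;1-s\pm it)/\prod_{p\mid N}(1-p^{1-2s\pm 2it})=O_N(1)$, and the harmless factors $\zeta(-1+2s)$, $\zeta(1\pm 2it)\ll T^\epsilon$, $\pi^{s-\frac12\mp it}$. I would apply Stirling to the gamma factors in the $u$- and $v$-contour integrals, shift $\sigma_u$ to $KT^\alpha$ as before, check that the $v$-integral converges with no further $T$-growth, and read the answer off the $\gamma$-localization: a direct estimate gives $\OD^-_{\Omega,\intg}(s,t)\ll T^\epsilon$ for $\beta\le 1-\alpha$ and $\ll T^{\alpha+\epsilon}$ for $1-\alpha<\beta<1$, the loss in the second range being the familiar one in which the $t$-aspect gamma and zeta factors are no longer absorbed by the $\gamma$-localization.

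\textbf{The main obstacle.} The delicate step is the one common to \S\ref{ss:OD+intg_upper}--\S\ref{sect:4.5}, but slightly worse here: pinning down precisely the sub-region of $(\gamma,r,r_j)$-space on which all the exponential contributions of the coupled gamma factors cancel --- in particular confirming that $|\gamma|\asymp T$, $|r_j|\ll T^{1-\alpha}$, and $|r|$ has the claimed size --- because the factor $\Gamma(u-v)\Gamma(-u-v)$ together with the $r_j$-dependent ratio of gammas entangles all three variables simultaneously. Once that region and the polynomial weight are fixed, applying Lemma~\ref{lem:sieve} (resp.\ \cite[Theorem~9.1]{IK04}) and optimizing over $\sigma_0$ is routine bookkeeping, reproducing the case-dependent exponents in \eqref{e:OD-upper_0}, \eqref{e:OD-upper_1}, \eqref{e:OD-upper_2} and the stated bounds for $\OD^-_{\Omega,\intg}$.
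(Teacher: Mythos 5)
Your overall apparatus is the right one and matches the paper's (Stirling localization, the shift of $\sigma_u$ to $KT^\alpha$, Lemma~\ref{lem:sieve} for the discrete part and \cite[Theorem~9.1]{IK04} for the continuous part), but the step you describe as ``collecting the exponent of $T$ as a function of $\sigma_0$'' and ``optimizing over $\sigma_0$'' is not what produces the stated bounds, and as written it cannot. In the paper the $v$-line is moved to the right \emph{past the poles} of $\Gamma\left(s-v-\frac{k}{2}\pm ir_j\right)$ coming from the spectral expansion \eqref{e:cM3_spec}; the coefficient of $\sigma_v$ in the exponent (e.g.\ $1-3\alpha$ in \eqref{betasmall}) is negative, the remaining integral is made negligible by pushing $\sigma_v$ arbitrarily far, and the dominant contribution is the \emph{residue} at the first pole, evaluated at $\sigma_v=\frac12$. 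If you merely choose the best admissible fixed contour (any $\sigma_0$ to the left of the first pole), the exponent you get is at best $1+\alpha-\frac{3\alpha-1}{2}+\epsilon$ --- the Proposition~\ref{prop:OD+res_upper} quality bound --- and at the starting position $\sigma_0=-\frac k2+\epsilon$ it is $1+\alpha+\epsilon$, i.e.\ the size of the main term. The entire $k$-amplification $(3\alpha-1)\frac{k+1}{2}$, $(2\alpha-\beta)\frac{k+1}{2}$, $\delta\frac k2$ in \eqref{e:OD-upper_0}--\eqref{e:OD-upper_2} comes from the residue evaluation at $\sigma_v=\frac12$, so the residue-extraction step is essential and is missing from your proposal.

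Two further points in your localization are off. First, the $\sigma_u$-shift produces the factor $e^{-2|r+t|K/T^{1-\alpha}}$, localizing $|r+t|\ll T^{1-\alpha}$; the restriction on $r_j$ is only inherited through the no-decay condition $|r_j|<|t'+r|$ (here, unlike in $\OD^+$, Stirling forces $|r|\le|t|$ rather than $|r|\ge|t|$). Second, your blanket claim $|r_j|\ll T^{1-\alpha}$ fails in exactly the regime that produces the third, qualitatively different bound: when $t'=-t$ and $r$, $t$ have opposite signs one only gets $|t'+r|\ll T^\beta$, so Lemma~\ref{lem:sieve} must be applied with $C=\beta$ rather than $C=1-\alpha$, and this is what yields \eqref{e:OD-upper_2} and the need for $\alpha=2\beta-1+\delta$ with $k$ large for the main term to dominate (cf.\ Remark~\ref{remark:OD-}). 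Your treatment of $\OD^-_{\Omega,\intg}$ is consistent with the paper's and is fine.
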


\begin{remark}\label{remark:OD-}
The first  two expressions, \eqref{e:OD-upper_0}, when $1-\beta < \alpha$, and \eqref{e:OD-upper_1}, 
when $1-\alpha < \beta \leq 2\alpha$ will always be smaller than the main term, 
which is on the order of $T^{1+\alpha +\epsilon}$ for all $k \ge 2$.  
On the other hand \eqref{e:OD-upper_2} is only smaller than the main term for sufficiently large $k$.  
Specifically, for \eqref{e:OD-upper_2} the main term will dominate 
if $\frac{2}{3} >\alpha = 2\beta-1 +\delta$ and $k > \frac{3\beta-2}{\delta}$.
\end{remark}

\begin{proof}
Recall \eqref{e:OD-_0}
\begin{multline}\label{e:OD-_0_in}
\OD^-_0(s, t)
=-\frac{(2\pi)^{2it}\cos(\pi it)}{\pi} 
\frac{4}{2\pi i} \int_{(\sigma_u)} \frac{h(u/i) u \tan(\pi u)}{\Gamma\left(u+it+\frac{k}{2}\right) \Gamma\left(-u+it+\frac{k}{2}\right)}
\\ \times \frac{1}{2\pi i} \int_{(\sigma_0)} \Gamma\left(u-v\right) \Gamma\left(-u-v\right) 
\Gamma\left(\frac{k}{2}-it+v\right) \Gamma\left(\frac{k}{2}+it+v\right)
\\ \times \bigg\{
\sum_{j\geq 1}
\frac{\sqrt{\pi} (4\pi)^{k-\frac{1}{2}} 
\Gamma\left(s-v-\frac{k}{2}+ir_j\right) \Gamma\left(s-v-\frac{k}{2}-ir_j\right)}
{\Gamma\left(s-v+\frac{k}{2}-\frac{1}{2}\right)\Gamma\left(s-v-\frac{k}{2}+\frac{1}{2}\right)}
\scrL(s, it; \overline{u_j}) \left<u_j, V_{f, g}\right>
\\ + \frac{1}{4\pi} \int_{-\infty}^\infty 
\frac{\sqrt{\pi} (4\pi)^{k-\frac{1}{2}} \Gamma\left(s-v-\frac{k}{2}+ir\right) \Gamma\left(s-v-\frac{k}{2}-ir\right)}
{\Gamma\left(s-v+\frac{k}{2}-\frac{1}{2}\right)\Gamma\left(s-v-\frac{k}{2}+\frac{1}{2}\right)}
\sum_{\cuspa}\scrL_{\cuspa}(s, it; ir) \left<E_{\cuspa}(*, 1/2+ir), V_{f, g}\right> \; dr
\bigg\} \; dv \; du. 
\end{multline}
Here $1< \sigma_u < \frac{3}{2}$ and $-\frac{k}{2} < \sigma_0 < -\sigma_u$. 
As in the case of the residual and integral contributions to $\OD^+(s, t)$, 
we will need to move the $v$ line of integration.   In these case it can be moved as far as desired past the first pole, 
at $\sigma_v = \Re(v) =\frac12$.      
The dominant error term  comes from this residue and the integral can be made to be bounded 
by an arbitrarily large negative power of $T$ by moving $\sigma_v$ further.  
We will see shortly that potential residues encountered from the gamma functions $\Gamma(u-v)$ and $\Gamma(-u-v)$ 
by moving $\sigma_v$ will be exponentially decayed by the function $h(u/i)$ 
because of the range that the imaginary part of $v$ is effectively restricted to.

Let $v=\sigma_v+ir$ and $u=\sigma_u+i\gamma$. 
We begin by using Stirling's formula to estimate the argument of the exponential contribution from the gamma factors 
in the discrete part of \eqref{e:OD-_0_in}.
This is 
\begin{equation}
-2|t| +2\max\{|r|,|\gamma|\}+2\max\{|r|,|t|\}-2\max\{|\gamma|,|t|\}-2|t'+r|+2\max\{|t'+r|,|r_j|\}.
\end{equation}
We have $|\gamma| >|t| = T^\beta$, with $\beta <1$, so, if $|r| > |\gamma|$ then this becomes
\begin{equation}
-2|t| +2|r|+2|r|-2|\gamma|
\end{equation}
if $|t' +r| \ge |r_j|$, and
\begin{equation}
-2|t| +2|r|+2|r|-2|\gamma|+ |r_j| - |t'+r|
\end{equation}
if $|r_j| \ge |t'+r|$.
In both cases this is greater than $2(|r|-|t|)+ 2(|r|-|\gamma|) >0$, so we have exponential decay.
Thus we now take $|r| \le \gamma$.   Here we have 
\begin{equation}
-2|t| +2\max\{|r|,|t|\}-2|t'+r|+2\max\{|t'+r|,|r_j|\}.
\end{equation}
This has exponential decay if $|r| >|t|$, so we are reduced to the case $|r| \le |t|$.  The above then reduces to
\begin{equation}
2\max\{|t'+r|,|r_j|\}-2|t'+r|,
\end{equation}
making it clear that we have zero exponential decay precisely when $|r_j| < |t'+r|$ and $|r| \le |t|$.

The polynomial part of the integrand of \eqref{e:OD-_0_in} is thus bounded above by a multiple of
\begin{multline}\label{OD-3}
\frac{|\gamma| |\gamma -r|^{-\sigma_v+\sigma_u-\frac{1}{2}}
|\gamma + r|^{-\sigma_v-\sigma_u-\frac{1}{2}}|r-t|^{\sigma_v+\frac{k-1}{2}}
|r+t|^{\sigma_v+\frac{k-1}{2}}}{|\gamma +t|^{\sigma_u+\frac{k-1}{2}}|-\gamma +t|^{-\sigma_u+\frac{k-1}{2}}
|t'+r|^{-2\sigma_v}}
\\ \times \sum_{ |r_j| <|t'+r|} |t'+r -r_j|^{-\sigma_v-\frac{k}{2}}  |t'+r +r_j|^{-\sigma_v-\frac{k}{2}}
\scrL(1/2-it', it; \overline{u_j})
\left<u_j, V_{f, g}\right>.
\end{multline}
Isolating the part with the exponent $\sigma_u$ gives us
\begin{equation*}
\left(\frac{|\gamma -r|}{|\gamma +r|}\right)^{\sigma_u}\left(\frac{|\gamma -t|}{|\gamma +t|}\right)^{\sigma_u}.
\end{equation*}
Again referring to Remark~\ref{remark:stirling}, we can now move $\sigma_u$ to either $\pm K T^\alpha$, 
as any poles of $\Gamma\left(-v+u\right) \Gamma\left(-v-u\right)$ that are crossed over 
will have residues exponentially decayed by the function $h(u/i)$.
Doing this, we have
\begin{equation*}
\left(\frac{|\gamma -r|}{|\gamma +r|}\right)^{\sigma_u}\left(\frac{|\gamma -t|}{|\gamma +t|}\right)^{\sigma_u}
\ll \left(\frac{|1 -\frac{r}{T}|}{|1+\frac{r}{T}|}\right)^{T\frac{\sigma_u}{T}} \left(\frac{|1 -\frac{t}{T}|}{|1 +\frac{t}{T}|}\right)^{T\frac{\sigma_u}{T}} 
\ll e^{-2|r+t|\frac{K}{T^{1-\alpha}}}.
\end{equation*}
As a consequence, there is exponential decay unless $|r+t| \ll T^{1-\alpha}$.    
If $r$ and $t$ have the same signs then  this condition implies $\beta \le 1-\alpha$, 
in which case $|r|, |t| \ll T^{1-\alpha}$.  
If $r$ and $t$ have opposite signs than $|r+t|=|T^\beta -|r|| \ll T^{1-\alpha}$.   Thus in both cases, if $t = t'$ the sum is over 
$|r_j| \ll T^{1-\alpha}$.   If $t = -t'$ and $r,t$ are the same sign then as $|r|, |t| \ll T^{1-\alpha}$, it follows that
$|r+t'| \ll T^{1-\alpha}$ and the sum is over $|r_j| \ll T^{1-\alpha}$.    If $t = -t'$ and $r,t$ have opposite signs then  $r,t'$ have the same sign and $|r+t'| \ll T^\beta$.  Thus in this case the sum is over $|r_j| \ll T^{\beta}$. 

We will now proceed to find an upper bound for the expression in \eqref{OD-3}.  
This same upper bound (after integration over $|T-|\gamma|| \ll T^\alpha$) 
will be the upper bound for residues accumulated as the $v$ line passes over poles of the gamma functions at  
$s-v-\frac{k}{2}\pm ir_j= -m$, for $m\ge 0$.  We will focus on the contribution from the discrete part.   
The corresponding poles  of the continuous part are at $s-v-\frac{k}{2}\pm ir= -m$, 
for $m\ge 0$, with the sum over the spectrum being replaced by an integral over $r$, and their contribution is of a lower order.  

As  $|r|,|t| \ll T^\beta$, with $\beta <1$, and $|T-|\gamma|| \ll T^\alpha$, with $\alpha <\frac{2}{3}$, 
(with apologies for the abuse of notation in which we also use $r$ to denote the imaginary part of $v$), 
$v = \sigma_v + ir$, it follows that
\begin{equation}
T \ll |\gamma \pm r|\ll T \quad \text{and} \quad T \ll |\gamma \pm t| \ll T.
\end{equation}
As a consequence of this, the expression in \eqref{OD-3} is bounded above by a constant times
\begin{equation}\label{OD-4}
T^{1-2\sigma_v-k} |r-t|^{\sigma_v+\frac{k-1}{2}} |r+t|^{\sigma_v+\frac{k-1}{2}} |r+t'|^{\sigma_v-\frac{k}{2}} 
\sum_{ |r_j| <|t'+r|} 
\scrL(1/2-it', it; \overline{u_j})
\left<u_j, V_{f, g}\right>.
\end{equation}
Here we have bounded  one of the $|t'+r \pm r_j|^{-\sigma_v-\frac{k}{2}}$ from above by $1$, and the other by
$|t'+r|^{-\sigma_v-\frac{k}{2}}$.   

First, consider the case $t'=t$.  By the discussion above the sum over $j$ is over $|r_j| \ll T^{1-\alpha}$ .   In this case,
applying Lemma~\ref{lem:sieve} to \eqref{OD-4}, with the constant $C= 1-\alpha$,  we see that the expression in \eqref{OD-4}
is bounded above by a constant times
\begin{equation}
T^{1-2\sigma_v-k} |r-t|^{\sigma_v+\frac{k-1}{2}} |r+t|^{\sigma_v+\frac{k-1}{2}} |r+t|^{\sigma_v-\frac{k}{2}}
T^{\frac{\max\{\beta, 1-\alpha\}}{2}+\frac{(1-\alpha)}{2}+(1-\alpha)k +\epsilon }.
\end{equation}
If $r$ and $t$ have the same sign then $|r|, |t|  \ll T^{1-\alpha}$,  $|r\pm t| \ll T^{1-\alpha}$, 
and the above has an upper bound of a constant times
\begin{equation}\label{betasmall}
T^{1-2\sigma_v -k + (1-\alpha)(3\sigma_v +\frac{k}{2} -1) + 1-\alpha + (1-\alpha)k +\epsilon}=
T^{(1-3\alpha)\sigma_v +(1-3\alpha)\frac{k}{2}+1+\epsilon}.
\end{equation}
If $r$ and $t$ have opposite signs then $|r+t| \ll T^{1-\alpha}$ and $|r-t| \ll T^\beta$.
In this case the above  has an upper bound of a constant times
\begin{multline}
T^{1-2\sigma_v -k + \beta(\sigma_v +\frac{k-1}{2}) + (1-\alpha)(2\sigma_v -\frac{1}{2}) 
+\frac{\max\{\beta, 1-\alpha\}}{2} +\frac{1-\alpha}{2}+ (1-\alpha)k +\epsilon}
\\= T^{(\beta -2\alpha)\sigma_v +(\frac{\beta}{2} -\alpha)k+1-\frac{\beta}{2} + \frac{\max\{\beta, 1-\alpha\}}{2}+\epsilon}.
\end{multline}
In order to have the coefficients of $\sigma_v$, $k$ negative, we will require $\alpha > \frac{\beta}{2}$ 
when $\beta >1-\alpha$.

We now estimate the residue contribution.
Initially $\sigma_v = -\epsilon -\sigma_1$, for sufficiently small $\epsilon ,\sigma_1>0$.  
The poles occur at $\frac{1}{2} - \sigma_v -\frac{k}{2} = -m$, for $m \ge 0$, and thus 
the first pole, as we move $\sigma_v$ in a positive direction is at $\sigma_v = 1/2$.  
As the exponent is  $(1-3\alpha)\sigma_v +(1-3\alpha)\frac{k}{2} +1+ \epsilon$, for $\alpha >\frac{1}{3}$, 
the coefficients of $\sigma_v$, $k$ are negative and the dominant residue is this first one.  
If $\beta \le 1-\alpha$, the upper bound for the residue is  $T^{\frac{1 -3\alpha}{2} +(1-3\alpha)\frac{k}{2} +1 +\epsilon}$, 
and after integration over $u$ this becomes  $T^{\frac{3}{2} -\frac{\alpha}{2} +(1-3\alpha)\frac{k}{2} +\epsilon}$.   
The integral goes to zero as $\sigma_v$ becomes arbitrarily large, 
and as this is the dominant residue, this is the upper bound for the case $t' = t$, $\beta \le 1-\alpha$.
 
If $\beta > 1-\alpha$, the exponent is  
\begin{equation}
(\beta -2\alpha)\sigma_v +(\tfrac{\beta}{2} -\alpha)k+1+\epsilon.
\end{equation}
For $\alpha > \frac{\beta}{2}$ the dominant residue is again at $\sigma_v = \frac{1}{2}$ 
and the exponent becomes, after integration over $u$,
\begin{equation}
(\beta -2\alpha)\tfrac{1}{2} +(\tfrac{\beta}{2} -\alpha)k+1+ \alpha+\epsilon = 1+ \alpha - (2\alpha-\beta)\tfrac{k+1}{2}+ \epsilon.
\end{equation}
Thus the upper bound in the case $\beta > 1-\alpha$ and 
$2\alpha >\beta$ is $T^{ 1+ \alpha - (2\alpha-\beta)\frac{k+1}{2}+ \epsilon}$

Now, consider the case $t' = -t$.  
Here the sum is over $|r_j| < |t-r|$.  
If $t$ and $r$ are the same sign then just as above $\beta \le 1-\alpha$ and $|t-r|,|t+r| \ll T^{1-\alpha}$.  
We then end up with the identical bound as in \eqref{betasmall}.
If $r$ and $t$ have opposite signs then, $t'$ and $r$ have the same sign,  
$|t+r| = |T^\beta -|r| |\ll T^{1-\alpha}$, $|t-r| \ll T^\beta$ and $|t'+r| =  |T^\beta +|r||\ll T^\beta$.   
Thus the sum is over $|r_j| < T^\beta$.  If $\beta \le 1-\alpha$, 
then we obtain the same upper bound as in the case $t$ and $t'$ have the same sign, and $\beta \le 1-\alpha$.   
If $\beta >1-\alpha$ then applying Lemma~\ref{lem:sieve} to \eqref{OD-4}, with the constant $C= \beta$, 
we see that the expression in \eqref{OD-4} is bounded above by a constant times
\begin{equation}
T^{(2\beta-1-\alpha)\sigma_v+(\beta-\frac{1}{2}-\frac{\alpha}{2})k +\frac{1}{2} +\frac{\beta+\alpha}{2} + \epsilon}.
\end{equation}
The coefficients of $\sigma_v$, $k$ are negative when $\alpha > 2\beta-1$, 
and the integral will approach zero as $\sigma_v$ becomes arbitrarily large. 
The largest error contribution comes from the first pole, at $\sigma_v = \frac{1}{2}$, and after the $u$ integration, 
this is bounded above by a constant times 
\begin{equation}
T^{3\frac{\beta}{2} + \alpha +(\beta-\frac{1}{2}-\frac{\alpha}{2})k+ \epsilon}.
\end{equation}
This will be smaller than the main term, (which is on the order of $T^{1+\alpha + \epsilon}$) 
for sufficiently large $k$, depending on the size of $\alpha$.   
Specifically, we require $\alpha > 2\beta -1$.  
If we write $\alpha = 2\beta -1 + \delta$, with $\delta>0$,  and $\alpha = 2\beta -1 + \delta<1$, 
then the requirement for the main term to dominate is $k > \frac{3\beta - 2}{\delta}$.  
Finally, as usual the continuous part of the spectrum contributes a smaller error.  We omit the details.

To bound $\OD_{\Omega, \intg}^-(s, t)$ from above we refer to \eqref{e:OD-_Omegaint} and move the $v$ line of integration to $\sigma_v = -\frac{k}{2} + 1 - \epsilon$, passing over no poles in the process.     An analysis of the exponential pieces, via Stirling's formula (exactly as has been done several times previously) tells us that in this case there is exponential decay unless $|\gamma| \ge |t| \ge |r|$ where, as above, $u = \sigma_u + i\gamma$ and $v = \sigma_v + ir$.   Then an analysis of the polynomial parts, also via Stirling's formula, tells us that if $t' = \pm t$, and $s = \frac{1}{2} -it'$, then  
$\OD_{\Omega, \intg}^-(s, t) \ll T^{\epsilon}$ if $\beta \le 1-\alpha$ and  $\OD_{\Omega, \intg}^-(s, t) \ll 
T^{\beta+ \alpha-1 + \epsilon} \ll T^{\alpha+ \epsilon} $ if $\beta > 1-\alpha$.  

This completes the proof of the proposition.
\end{proof}

\subsection{Analysis of $M^-_{\Omega}(s, t)$}\label{ss:MOmega-}
Recalling the formula for $M^-_{\Omega}(s, t)$ as given in \eqref{e:M-Omega_def} and \eqref{e:OD-_Omegares}, 
we will now prove the following lemma.
\begin{lemma}
For $\Re(s)=\frac{1}{2}$, we get
\begin{multline}\label{e:M-_Omega}
M^-_{\Omega}(s, t) 
=  \frac{1}{2} \zeta(2-2s)
\frac{(2\pi)^{4s-2+2it}\cos(\pi it)}{\sin(\pi s)} 
\\ \times \bigg\{
\frac{(2\pi)^{-2it}}{\sin(\pi(s-it))} 
\zeta(1+2it)
\frac{1}{2} \sum_{\cuspa} \frac{\scrP_{\cuspa}(s, it; 1-s+it)}{\prod_{p\mid N}(1-p^{1-2s+2it})} 
\frac{L_\cuspa(3/2-s+it, f\times \bar{g})}{\zeta^{(N)}(3-2s+2it)}
H_0(-2s+1; h)
\\ + 
\frac{(2\pi)^{2it}}{\sin(\pi(s+it))}
\zeta(1-2it)
\frac{1}{2}\sum_{\cuspa} \frac{\scrP_{\cuspa}(s, it; 1-s-it)}{\prod_{p\mid N}(1-p^{1-2s-2it})} 
\frac{L_\cuspa(3/2-s-it, f\times \bar{g})}{\zeta^{(N)}(3-2s-2it)}
H_0(-2s+1-2it; h)
%
\bigg\}. 
\end{multline}
Here $H_0(*; h)$ is given in \eqref{e:H0}. 
\end{lemma}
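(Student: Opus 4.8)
The plan is to start from the explicit formula for $\OD^-_{\Omega, \res}(s, t)$ in \eqref{e:OD-_Omegares} and simplify the $u$-integral and the product of gamma functions appearing outside it, exactly in the spirit of the proof of Lemma~\ref{lem:M+Omega}. First I would open the inner product $\left<E_{\cuspa}(*, 3/2-s\pm it), V_{f, g}\right>$ via the unfolding computation (the same one used for $M^+_{\Omega}$),
\begin{equation}
\left<E_\cuspa(*, 3/2-s\pm it), V_{f, g}\right>
= \frac{\Gamma\left(\tfrac{1}{2}-s\pm it+k\right)}{(4\pi)^{\frac{1}{2}-s\pm it+k} \zeta^{(N)}(3-2s\pm 2it)}
L_\cuspa(3/2-s\pm it, f\times \bar{g}),
\end{equation}
so the Rankin--Selberg $L$-functions $L_\cuspa(3/2-s\pm it, f\times \bar g)/\zeta^{(N)}(3-2s\pm 2it)$ emerge. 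The factor $\Gamma\left(\tfrac12-s\pm it+k\right)$ then combines with the $\Gamma\left(-s\pm it+\tfrac12+k\right)$ already present in \eqref{e:OD-_Omegares}, and the $(4\pi)^{k-1/2}$ and $(4\pi)^{-1/2+s\mp it-k}$ powers collapse to a clean power of $2\pi$.

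Next I would handle the $u$-integral. After pulling out the factors that do not depend on $u$, the remaining integral is
\begin{equation}
\frac{4}{2\pi i} \int_{(\sigma_u)} h(u/i)\, u\, \tan(\pi u)\,
\frac{\Gamma\left(u-2s+1\pm it+\tfrac{k}{2}\right) \Gamma\left(-u-2s+1\pm it+\tfrac{k}{2}\right)}
{\Gamma\left(u+it+\tfrac{k}{2}\right) \Gamma\left(-u+it+\tfrac{k}{2}\right)}\; du.
\end{equation}
Here I would recognize that, on $\Re(s)=\tfrac12$ with $s=\tfrac12-it'$, the shift $-2s+1 = 2it'$ turns the numerator gammas into $\Gamma\left(\pm u + 2it' \pm it + \tfrac{k}{2}\right)$, and the resulting ratio is exactly (a shift of) the ratio appearing in the definition \eqref{e:H0} of $H_0(ix; h)$. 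Using the reflection formula to convert $\tan(\pi u)$-type factors and matching the contour (moving $\sigma_u$ back, changing variables $u = ir$, and invoking evenness $h(u/i)=h(-u/i)$ as in Lemma~\ref{lem:M+Omega}) should identify this $u$-integral with $H_0(-2s+1; h)$ in the $+$ case and $H_0(-2s+1-2it; h)$ in the $-$ case, up to elementary trigonometric prefactors. This is the step I expect to require the most care: keeping track of the precise trigonometric factors ($\cos(\pi it)$, $\sin(\pi s)$, $\sin(\pi(s\mp it))$) produced by the various reflection-formula manipulations and the $\tan(\pi u)$ in the integrand, and making sure the branch of $(2\pi)$-powers and the $i^k$ work out so that the two $\pm$ terms assemble into the symmetric form displayed in \eqref{e:M-_Omega}.

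Finally, I would apply the functional equation of $\zeta$ to rewrite $\zeta(-1+2s)\Gamma(2s-1)$, together with the Legendre duplication and reflection formulas, precisely as was done in the proof of Lemma~\ref{lem:M+Omega} (where $\zeta(2s-1)\Gamma(2s-1) = -\zeta(2-2s)(2\pi)^{2s-2}\pi/\cos(\pi(s+\tfrac12))$), to pull out the factor $\tfrac12 \zeta(2-2s)(2\pi)^{4s-2+2it}\cos(\pi it)/\sin(\pi s)$. Collecting everything, the two $\pm$ contributions (with $\scrP_{\cuspa}(s, it; 1-s\pm it)/\prod_{p\mid N}(1-p^{1-2s\pm 2it})$ summed over cusps $\cuspa$, the Rankin--Selberg $L$-values, and the $H_0$ factors) give exactly \eqref{e:M-_Omega}. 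The main obstacle, as noted, is purely bookkeeping of the gamma/trigonometric identities rather than anything structural; there is no new analytic input beyond what was used for $M^+_{\Omega}(s, t)$.
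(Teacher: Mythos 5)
Your proposal is correct and follows exactly the route the paper intends: the printed proof is simply ``parallel to Lemma~\ref{lem:M+Omega}, omitted,'' and your steps (unfolding the inner product, cancelling $\Gamma(\tfrac12-s\pm it+k)$, applying $\zeta(2s-1)\Gamma(2s-1)=\zeta(2-2s)(2\pi)^{2s-2}\pi/\sin(\pi s)$ and the reflection formula to the remaining gamma pairs to produce the $\sin(\pi(s\mp it))$ factors, and identifying the $u$-integral with $H_0(-2s+1;h)$ resp.\ $H_0(-2s+1-2it;h)$) assemble to the stated constants. The only minor imprecision is that here the $u$-integral is actually simpler than in Lemma~\ref{lem:M+Omega}: since $u\tan(\pi u)$ times the gamma ratio is already even, no odd-part cancellation or reflection formula is needed inside the integral --- one just moves the contour to $\Re(u)=0$ (the pole of $\tan(\pi u)$ at $u=\tfrac12$ being killed by $h(\pm i/2)=0$) and substitutes $u=ir$, so that $u\tan(\pi u)\,du$ becomes $-r\tanh(\pi r)\cdot i\,dr$ and the integral equals $-2\pi H_0$.
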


\begin{proof}
The proof is parallel to the proof of Lemma~\ref{lem:M+Omega} for $M^+_{\Omega}(s, t)$. 
and is consequently omitted. 
\end{proof} 

\section{Proof of theorems}

\subsection{Proof of Theorem~\ref{thm:second_main}}
The proof begins with the description of the spectral sum given in \eqref{e:secondmoment}:
\begin{equation}
S(s,t; f, g; h)  = M(s, t) + \OD^+(s, t) + \OD^-(s, t).
\end{equation}
Then in Proposition~\ref{prop:OD+_decomp}, $\OD^+(s, t)$ is decomposed as 
\begin{equation}
\OD^+(s, t) = M^+_{\Omega}(s, t) + \scrE(s, t),
\end{equation}
with $M^+_{\Omega}(s, t)$ given in \eqref{e:MOmega_explicit} and 
\begin{multline}
\scrE(s, t) = \OD^+_{\cusp, \intg}(s, t)+\OD^+_{\cusp, \res}(s, t) 
+ \OD^+_{\cont, \intg}(s, t) + \OD^+_{\cont, \res}(s, t)
\\ + \OD^+_{\Omega, \intg}(s, t) 
+ \sum_{\ell=1}^{\frac{k}{2}} \OD^+_{\Omega, \res, 1}(s, t; \ell) + \sum_{\ell=0}^{\frac{k}{2}} \OD^+_{\Omega, \res, 2}(s, t; \ell).
\end{multline}
Similarly, by \eqref{e:OD-_decomp} and \eqref{e:OD-Omega_decomp}, $\OD^-(s, t)$ is decomposed as 
\begin{equation}
\OD^-(s, t) = M^-_{\Omega}(s, t) + \OD^-_0(s, t)+ \OD^-_{\Omega, \intg}(s, t).
\end{equation}

The breakdown of the main term for $S(s,t; f, g; h)$ is given just above as
\begin{equation}
\scrM(s, t) := M(s, t) + M^+_{\Omega}(s, t) + M^-_{\Omega}(s, t). 
\end{equation}
Recalling \eqref{e:Mst}, \eqref{e:MOmega_explicit} and \eqref{e:M-_Omega},
\begin{multline}
\scrM (s, t)
= \zeta(2s) \zeta(1+2it)
H_0(0; h)
\prod_{p\mid N} \frac{(1-p^{-2s})(1-p^{-1-2it})}{1-p^{-2s-1-2it}} 
\frac{L(s+1/2+it, f\times \bar{g})}{\zeta(2s+1+2it)}
\\ + (2\pi)^{4it} \zeta(2s)\zeta(1-2it)
H_0(-2it; h)
N^{-2it} \prod_{p\mid N} \frac{(1-p^{-1})(1-p^{-2s})}{1-p^{-2s-1+2it}}
\frac{L(s+1/2-it, f\times \bar{g})}{\zeta(2s+1-2it)} 
\\ + 
(2\pi)^{4s-2}\zeta(2-2s)\zeta(1+2it)
\frac{1}{4}\frac{\cos(\pi it)-\cos(\pi(2s-it))}{\sin(\pi s)\sin(\pi(s-it))}
H_0(-2s+1; h)
\\ \times \sum_{\cuspa}\frac{\scrP_{\cuspa}(s, it; 1-s+it)}{\prod_{p\mid N}(1-p^{1-2s+2it})}
\frac{L_\cuspa(3/2-s+it, f\times \bar{g})}{\zeta^{(N)}(3-2s+2it)}
\\ + (2\pi)^{4s-2+4it}\zeta(2-2s)\zeta(1-2it)
\frac{1}{4}\frac{\cos(\pi it) - \cos(\pi(2s+it))}{\sin(\pi s)\sin(\pi(s+it))}
H_0(-2s+1-2it; h)
\\ \times 
\sum_{\cuspa} \frac{\scrP_{\cuspa}(s, it; 1-s-it)}{\prod_{p\mid N}(1-p^{1-2s-2it})}
\frac{L_\cuspa(3/2-s-it, f\times \bar{g})}{\zeta^{(N)}(3-2s-2it)}. 
\end{multline}
Since 
\begin{equation}
\cos(\pi it) -\cos(\pi (2s+it)) 
= 2\sin(\pi(s+it)) \sin(\pi s), 
\end{equation}
and then by changing the ordering and by Corollary~\ref{cor:scrP_Eis_pm}
\begin{multline}
\scrM (s, t)
= \zeta(2s) \zeta(1+2it)
H_0(0; h)
\prod_{p\mid N} \frac{(1-p^{-2s})(1-p^{-1-2it})}{1-p^{-2s-1-2it}} 
\frac{L(s+1/2+it, f\times \bar{g})}{\zeta(2s+1+2it)}
\\ + (2\pi)^{4it} \zeta(2s)\zeta(1-2it)
H_0(-2it; h)
N^{-2it} \prod_{p\mid N} \frac{(1-p^{-1})(1-p^{-2s})}{1-p^{-2s-1+2it}}
\frac{L(s+1/2-it, f\times \bar{g})}{\zeta(2s+1-2it)} 
\\ + (2\pi)^{4s-2+4it}\zeta(2-2s)\zeta(1-2it)
H_0(-2s+1-2it; h)
\frac{1}{2}\sum_{\cuspa} \frac{\scrP_{\cuspa}(s, it; 1-s-it)}{\prod_{p\mid N}(1-p^{1-2s-2it})}
\frac{L_\cuspa(3/2-s-it, f\times \bar{g})}{\zeta^{(N)}(3-2s-2it)}
\\ + 
(2\pi)^{4s-2}\zeta(2-2s)\zeta(1+2it)
H_0(-2s+1; h)
N^{1-2s} \prod_{p\mid N}\frac{(1-p^{-1-2it})(1-p^{-1})}{(1-p^{-3+2s-2it})}
\frac{L(3/2-s+it, f\times \bar{g})}{\zeta(3-2s+2it)}.
\end{multline}

The polynomial $\frac{\scrP_{\cuspa}(s, it; 1-s- it)}{\prod_{p\mid N}(1-p^{1-2s- 2it})}$ 
for each cusp $\cuspa$ is given in Corollary~\ref{cor:scrP_Eis_pm}.
Following the parameterization for the cusps $\cuspa$ in \cite{Y19}, described in \S\ref{ss:factorization_scrL}, 
\begin{multline}
\frac{\scrP_{\frac{1}{ca}}(s, it; 1-s-it)}{\prod_{p\mid N} (1-p^{1-2s-2it})}
\\ = 2N^{-\frac{1}{2}-s-it}
\left(\frac{a}{\gcd(a, N/a)}\right)^{-s+\frac{3}{2}-it}
\prod_{p\mid \frac{N}{a}} (1-p^{1-2s})(1-p^{-2it})
\prod_{\substack{p\mid a\\ p\nmid\frac{N}{a}}} (1-p^{-1})^2.
\end{multline}
With this description, we get \eqref{e:M2}.

\subsection{Proof of Theorem~\ref{thm:second_asymp}}
With the choice of the test function $h=h_{T, \alpha}$ as in \eqref{e:hdef}, 
the upper bounds for all the contributions to $\scrE^+(s, t)$ are given in
Propositions~\ref{prop:ub_OD+ccint},~\ref{prop:OD+res_upper}, and~\ref{prop:OD+Omega_upper},
and upper bounds for $\scrE^-(s, t)$ are given in Proposition~\ref {prop:OD-up}.

%

Now it is remained the bounds for the main term $\scrM(s, t)$ for $s=\frac{1}{2}\pm it$. 

When $f\neq g$, taking $s=\frac{1}{2}-it$ in $\scrM(s, t)$  \eqref{e:M2}, we get
\begin{multline}\label{e:scrM_fneqg_1/2-it}
\scrM (1/2-it, t)
= \zeta(1-2it) \zeta(1+2it)
H_0(0; h)
\prod_{p\mid N} \frac{(1-p^{-1+2it})(1-p^{-1-2it})}{1-p^{-2}} 
\frac{L(1, f\times \bar{g})}{\zeta(2)}
\\ + \zeta(1+2it)\zeta(1-2it)
H_0(0; h)
\frac{1}{N}
\sum_{\cuspa=\frac{1}{ca}} 
\left(\frac{a}{\gcd(a, N/a)}\right)
\prod_{p\mid \frac{N}{a}} \frac{(1-p^{2it})(1-p^{-2it})}{1-p^{-2}}
\prod_{\substack{p\mid a\\ p\nmid\frac{N}{a}}} \frac{1-p^{-1}}{1+p^{-1}}
\frac{L_\cuspa(1, f\times \bar{g})}{\zeta(2)}
\\ + (2\pi)^{4it} \zeta(1-2it)\zeta(1-2it) H_0(-2it; h)
N^{-2it} \prod_{p\mid N} \frac{(1-p^{-1})(1-p^{-1+2it})}{1-p^{-2+4it}}
\frac{L(1-2it, f\times \bar{g})}{\zeta(2-4it)} 
\\ + 
(2\pi)^{-4it}\zeta(1+2it)\zeta(1+2it)
H_0(2it; h)
N^{2it} \prod_{p\mid N}\frac{(1-p^{-1-2it})(1-p^{-1})}{(1-p^{-2-4it})}
\frac{L(1+2it, f\times \bar{g})}{\zeta(2+4it)}.
\end{multline}
Here we use the parameterization for the cusps $\cuspa$ as in \cite{Y19}, described in \S\ref{ss:factorization_scrL}.  
Similarly, taking $s=\frac{1}{2}+it$ in $\scrM(s, t)$ \eqref{e:M2}, 
\begin{multline}\label{e:scrM_fneqg_1/2+it}
\scrM (1/2+it, t)
= 
2(2\pi)^{4it} \zeta(1+2it)\zeta(1-2it)
H_0(-2it; h)
N^{-2it} \prod_{p\mid N} \frac{(1-p^{-1})(1-p^{-1-2it})}{1-p^{-2}}
\frac{L(1, f\times \bar{g})}{\zeta(2)} 
%
\\ + \zeta(1+2it) \zeta(1+2it)
H_0(0; h)
\prod_{p\mid N} \frac{(1-p^{-1-2it})(1-p^{-1-2it})}{1-p^{-2-4it}} 
\frac{L(1+2it, f\times \bar{g})}{\zeta(2+4it)}
\\ + (2\pi)^{8it}\zeta(1-2it)\zeta(1-2it)
H_0(-4it; h)
N^{-1-2it}
\\ \times \sum_{\cuspa=\frac{1}{ca}} 
\left(\frac{a}{\gcd(a, N/a)}\right)^{1-2it}
\prod_{p\mid \frac{N}{a}} \frac{(1-p^{-2it})^2}{1-p^{-2+4it}}
\prod_{\substack{p\mid a\\ p\nmid\frac{N}{a}}} \frac{(1-p^{-1})^2}{1-p^{-2+4it}}
\frac{L_\cuspa(1-2it, f\times \bar{g})}{\zeta(2-4it)}.
\end{multline}

Now we assume that $f=g$. 
For each cusp $\cuspa$, since 
\begin{equation}
\Res_{s=1} E_\cuspa(z, s) = \frac{1}{\vol(\Gamma_0(N)\bsl \HH)}, 
\end{equation}
for any $\cuspa$ for $\Gamma_0(N)$, when $f=g$, we get
\begin{equation}
\Res_{s=1}L_\cuspa(s, f\times\bar{f}) = \Res_{s=1}L(s, f\times\bar{f}). 
\end{equation}
To take $s=\frac{1}{2}-it$ in $\scrM(s, t)$ \eqref{e:M2}, we first note that 
\begin{multline}
\sum_{\cuspa=\frac{1}{ca}} 
\left(\frac{a}{\gcd(a, N/a)}\right)
\prod_{p\mid \frac{N}{a}} \frac{(1-p^{2it})(1-p^{-2it})}{1-p^{-2}}
\prod_{\substack{p\mid a\\ p\nmid\frac{N}{a}}} \frac{(1-p^{-1})^2}{1-p^{-2}}
\\ = \frac{1}{\prod_{p\mid N}(1-p^{-2})}
\sum_{a\mid N} a \prod_{p\mid \gcd(a, N/a)}(1-p^{-1}) 
\prod_{p\mid \frac{N}{a}} (1-p^{2it})(1-p^{-2it})
\prod_{\substack{p\mid a\\ p\nmid\frac{N}{a}}} (1-p^{-1})^2
\\ = \frac{1}{\prod_{p\mid N}(1-p^{-2})}
\prod_{p\mid N}\bigg\{ (1-p^{2it})(1-p^{-2it}) 
+ (1-p^{2it})(1-p^{-2it}) (1-p^{-1}) \sum_{k=1}^{\ord_p(N)-1} p^k 
+ (1-p^{-1})^2 p^{\ord_p(N)}\bigg\}
\\ =N\prod_{p\mid N} \frac{(1-p^{-1+2it})(1-p^{-1-2it})}{(1-p^{-2})}.
\end{multline}
So the poles of the Rankin-Selberg convolution cancel and we get
\begin{multline}\label{e:scrM_f=g_1/2-it}
\scrM(1/2-it, t)
= 
- \frac{\zeta(1-2it) \zeta(1+2it)}{\zeta(2)}
\left.\frac{d}{ds}H_0(-2s+1-2it; h)\right|_{s=\frac{1}{2}-it}
\prod_{p\mid N} \frac{(1-p^{-1+2it})(1-p^{-1-2it})}{1-p^{-2}} 
\\ \times \Res_{s=1}L(s, f\times\bar{f})
\\ - \frac{\zeta(1-2it) \zeta(1+2it)}{\zeta(2)}
H_0(0; h)N^{-1} \prod_{p\mid N}\frac{1}{1-p^{-2}}
\\ \times \sum_{a\mid N} \bigg\{a\prod_{p\mid \frac{N}{a}} (1-p^{2it})(1-p^{-2it}) \prod_{\substack{p\mid a\\ p\nmid a}}(1-p^{-1})^2 
\prod_{p\mid \gcd(a, N/a)}(1-p^{-1}) 
\bigg(-\log \left(\frac{a}{\gcd(a, N/a)}\right) + \sum_{p\mid \frac{N}{a}} 2\log p\bigg) \bigg\} 
\\ \times  \Res_{s=1}L(s, f\times\bar{f})
\\ + \frac{\zeta(1-2it) \zeta(1+2it)}{\zeta(2)}
H_0(0; h)
\prod_{p\mid N} \frac{(1-p^{-1+2it})(1-p^{-1-2it})}{1-p^{-2}} 
\\ \times \bigg\{2\frac{\zeta'(1-2it)}{\zeta(1-2it)}+2\frac{\zeta'(1+2it)}{\zeta(1+2it)}  
- 4\frac{\zeta'(2)}{\zeta(2)} 
-4\log(2\pi)
+ 2\sum_{p\mid N} \frac{\log p}{1-p^{-1+2it}} + \log N
\bigg\} 
\Res_{s=1}L(s, f\times\bar{f})
\\ + 2\frac{\zeta(1-2it) \zeta(1+2it)}{\zeta(2)}
H_0(0; h)
\prod_{p\mid N} \frac{(1-p^{-1+2it})(1-p^{-1-2it})}{1-p^{-2}} 
\left.\frac{d}{ds}L(s, f\times \bar{f})\right|_{s=1}
\\ + (2\pi)^{4it} \zeta(1-2it)\zeta(1-2it) H_0(-2it; h)
N^{-2it} \prod_{p\mid N} \frac{(1-p^{-1})(1-p^{-1+2it})}{1-p^{-2+4it}}
\frac{L(1-2it, f\times \bar{f})}{\zeta(2-4it)} 
\\ + 
(2\pi)^{-4it}\zeta(1+2it)\zeta(1+2it)
H_0(2it; h)
N^{2it} \prod_{p\mid N}\frac{(1-p^{-1-2it})(1-p^{-1})}{(1-p^{-2-4it})}
\frac{L(1+2it, f\times \bar{f})}{\zeta(2+4it)}.
\end{multline}
Here, recalling \eqref{e:H0}, 
\begin{multline}\label{e:H0_1der-}
\left.\frac{d}{ds} H_0(-2s+1-2it; h)\right|_{s=\frac{1}{2}-it} 
\\ = -2 \frac{1}{\pi^2} \int_{-\infty}^\infty h(r) r \tanh(\pi r) 
\bigg(\psi\left(ir+it+\frac{k}{2}\right) +\psi\left(-ir+it+\frac{k}{2}\right)\bigg)\; dr, 
\end{multline}
where $\psi(x) = \frac{\Gamma'}{\Gamma}(x)$ is the digamma function. 

Taking $s=\frac{1}{2}+it$ in $\scrM(s, t)$ \eqref{e:M2}, 
\begin{multline}\label{e:scrM_f=g_1/2+it}
\scrM (1/2+it, t)
\\ = - (2\pi)^{4it} \frac{\zeta(1+2it) \zeta(1-2it)}{\zeta(2)} 
\left.\frac{d}{ds} H_0(-2s+1; h)\right|_{s=\frac{1}{2}+it} 
N^{-2it} \prod_{p\mid N} \frac{1-p^{-1-2it}}{1+p^{-1}}
\Res_{s=1}L(s, f\times\bar{f}) 
\\ + (2\pi)^{4it} \frac{\zeta(1+2it) \zeta(1-2it)}{\zeta(2)} H_0(-2it; h) N^{-2it} \prod_{p\mid N} \frac{(1-p^{-1-2it})}{1+p^{-1}}
\\ \times \bigg\{2\frac{\zeta'(1+2it)}{\zeta(1+2it)}+ 2\frac{\zeta'(1-2it)}{\zeta(1-2it)}- 4\frac{\zeta'(2)}{\zeta(2)} 
- 4\log(2\pi) + 2\log N
+ 2\sum_{p\mid N} \log p \frac{p^{-1-2it}}{(1-p^{-1-2it})}
\bigg\} 
\Res_{s=1}L(s, f\times\bar{f})
\\ + 2(2\pi)^{4it} \frac{\zeta(1+2it) \zeta(1-2it)}{\zeta(2)} H_0(-2it; h) N^{-2it} \prod_{p\mid N} \frac{(1-p^{-1-2it})}{1+p^{-1}}
\left.\frac{d}{ds}L(s, f\times\bar{f})\right|_{s=1} 
\\ + \zeta(1+2it) \zeta(1+2it)
H_0(0; h)
\prod_{p\mid N} \frac{(1-p^{-1-2it})}{1+p^{-1-2it}} 
\frac{L(1+2it, f\times \bar{f})}{\zeta(2+4it)}
\\ + (2\pi)^{8it}\zeta(1-2it)\zeta(1-2it)
H_0(-4it; h)
\\ \times N^{-1-2it}
\sum_{\cuspa=\frac{1}{ca}} 
\left(\frac{a}{\gcd(a, N/a)}\right)^{1-2it}
\prod_{p\mid \frac{N}{a}} \frac{(1-p^{-2it})^2}{1-p^{-2+4it}}
\prod_{\substack{p\mid a\\ p\nmid\frac{N}{a}}} \frac{(1-p^{-1})^2}{1-p^{-2+4it}}
\frac{L_\cuspa(1-2it, f\times \bar{f})}{\zeta(2-4it)}.
\end{multline}
Similarly, recalling \eqref{e:H0}, 
\begin{multline}\label{e:H0_1der+}
\left.\frac{d}{ds} H_0(-2s+1; h)\right|_{s=\frac{1}{2}+it} 
= -2 \frac{1}{\pi^2} \int_{-\infty}^\infty h(r) r \tanh(\pi r) 
\bigg(\psi\left(ir-it+\frac{k}{2}\right) +\psi\left(-ir-it+\frac{k}{2}\right)\bigg)
\\ \times \frac{\Gamma\left(ir-it+\frac{k}{2}\right) \Gamma\left(-ir-it+\frac{k}{2}\right)}
{\Gamma\left(ir+it+\frac{k}{2}\right) \Gamma\left(-ir+it+\frac{k}{2}\right)} \; dr. 
\end{multline}

To calculate an upper bound for $\scrM(s, t)$, note first that the main part of it is given by the asymptotic for $H_0(ix;h)$, 
(see \eqref{e:H0_asymp}),  stated more simply as an upper bound, and an upper bound for the derivatives of $H_0(ix; h)$, which are given below.
The remaining pieces are easily estimated using the trivial upper bounds 
$L_\cuspa(1\pm 2it, f\times \bar{g}) \ll (1+|t|)^\epsilon$ when $f \ne g$ or $f=g$, 
and $\zeta(1\pm 2t) \ll(1 +|t|)^\epsilon$.

We now estimate $\scrM(1/2, 0)$ by taking $t=0$ in \eqref{e:scrM_fneqg_1/2-it} and \eqref{e:scrM_f=g_1/2-it}. 
When $f\neq g$, there are double poles in $\scrM(1/2-it, t)$ \eqref{e:scrM_fneqg_1/2-it}  at $t=0$. 
The poles cancel each other and the main contribution in the remaining term comes from 
\begin{equation}
-\frac{1}{4} \prod_{p\mid N} \frac{(1-p^{-1})}{1+p^{-1}}
\frac{L(1, f\times \bar{g})}{\zeta(2)} 
\bigg(\left.\frac{d^2}{dt^2}H_0(-2it; h)\right|_{t=0}
+ \left.\frac{d^2}{dt^2} H_0(2it; h)\right|_{t=0}\bigg).
\end{equation}
Recalling \eqref{e:H0}, 
\begin{equation}
\left.\frac{d^2}{dt^2} H_0(-2it; h)\right|_{t=0}
= -4 \frac{1}{\pi^2} \int_{-\infty}^\infty h(r) r\tanh(\pi r) 
\bigg(\psi\left(ir+\frac{k}{2}\right)+ \psi\left(-ir+\frac{k}{2}\right)\bigg)^2 \; dr
\end{equation}
and 
\begin{multline}
\left.\frac{d^2}{dt^2} H_0(2it; h)\right|_{t=0} 
= -4 \frac{1}{\pi^2} \int_{-\infty}^\infty h(r) r\tanh(\pi r) 
\bigg(\psi\left(ir+\frac{k}{2}\right)+ \psi\left(-ir+\frac{k}{2}\right)\bigg)^2 \; dr
\\ -8 \frac{1}{\pi^2} \int_{-\infty}^\infty h(r) r\tanh(\pi r) 
\bigg(\psi^{(1)} \left(ir+\frac{k}{2}\right)+ \psi^{(1)} \left(-ir+\frac{k}{2}\right)\bigg) \; dr
\end{multline}
Since, for $m\geq 1$, 
\begin{equation}\label{e:digamma_der}
\psi^{(m)}(z)\sim (-1)^{m+1}\frac{1}{z^m}, 
\end{equation}
when $f\neq g$ and $t=0$, the main contribution is 
\begin{equation}\label{e:main_t=0_fneqg}
2 \prod_{p\mid N} \frac{1-p^{-1}}{1+p^{-1}}
\frac{L(1, f\times \bar{g})}{\zeta(2)} 
\frac{1}{\pi^2} \int_{-\infty}^\infty h(r) r\tanh(\pi r) 
\bigg(\psi\left(ir+\frac{k}{2}\right)+ \psi\left(-ir+\frac{k}{2}\right)\bigg)^2 \; dr. 
\end{equation}

Similarly, when $f=g$, there are triple poles (because $L(1\pm 2it, f\times\bar{f})$ has a pole at $t=0$) 
in $\scrM(1/2-it, t)$ \eqref{e:scrM_f=g_1/2-it} at $t=0$. 
Again the poles are canceled each other and the main contribution comes from 
\begin{equation}
\frac{1}{8i} 
\prod_{p\mid N} \frac{1-p^{-1}}{1+p^{-1}} 
\frac{\Res_{s=1}L(1, f\times\bar{f})}{\zeta(2)}
\bigg(\left.\frac{d^3}{dt^3}H_0(-2it; h)\right|_{t=0} - \left.\frac{d^3}{dt^3} H_0(2it; h)\right|_{t=0} \bigg)
\end{equation}
Again, by \eqref{e:digamma_der}, when $f=g$ and $t=0$, the main contribution is 
\begin{equation}\label{e:main_t=0_f=g}
2\prod_{p\mid N} \frac{1-p^{-1}}{1+p^{-1}} 
\frac{\Res_{s=1}L(1, f\times\bar{f})}{\zeta(2)}
\frac{1}{\pi^2} \int_{-\infty}^\infty h(r) r\tanh(\pi r) 
\bigg(\psi\left(ir+\frac{k}{2}\right)+ \psi\left(-ir+\frac{k}{2}\right)\bigg)^3 \; dr.
\end{equation}

Take $h=h_{T, \alpha}$, as in \eqref{e:hdef}. 
By the recurrence relation and asymptotic expansion of the digamma function, we get 
\begin{equation}
\bigg(\psi\left(ir+\frac{k}{2}\right)+ \psi\left(-ir+\frac{k}{2}\right)\bigg)^d
\sim 2^d (\log |r|)^d,
\end{equation}
from which \eqref{e:cfg_N} follows.
\qed

\thispagestyle{empty}
{\footnotesize
\nocite{*}
\bibliographystyle{amsalpha}
\bibliography{reference_SDDS}
}
\end{document}